\newtheorem{thm}{Theorem}[section]
\newtheorem{lem}[thm]{Lemma}
\newtheorem{prop}[thm]{Proposition}
\newtheorem{rem}{Remark}[section]
\def\QEDopen{{\setlength{\fboxsep}{0pt}\setlength{\fboxrule}{0.2pt}\fbox{\rule[0pt]{0pt}{1.3ex}\rule[0pt]{1.3ex}{0pt}}}} %
\def\QED{\QEDopen} %
\def\endproof{\hspace*{\fill}~\QED\par\endtrivlist\unskip}%
\def\bma#1\ema{{\allowdisplaybreaks\begin{split}#1\end{split}}}
\numberwithin{equation}{section}
\begin{document}	
	
	\newpage
	\title{Global well-posedness and optimal time decay rates of solutions to the pressureless Euler-Navier-Stokes system }
	\author[a,b]{ Feimin Huang   \thanks{E-mail: fhuang@amt.ac.cn(F.-M. Huang)}}
	\author[a]{ Houzhi Tang   \thanks{E-mail: houzhitang@amss.ac.cn(H.-Z. Tang)}}
	\author[c]{Weiyuan Zou \thanks{E-mail: zwy@amss.ac.cn(W.-Y. Zou)}}
	\affil[a]{Academy of Mathematics and Systems Science, Chinese Academy of Sciences, Beijing 100190, P. R. China}
	\affil[b]{School  of  Mathematical  Sciences,  University  of  Chinese  Academy  of  Sciences,  Beijing 100049, P. R. China }
	\affil[c]{College of Mathematics and Physics,
		Beijing University of Chemical Technology, Beijing 100029, P. R. China} 
	
	\date{}
	\renewcommand*{\Affilfont}{\small\it}	
	\maketitle
	\begin{abstract}
		In this paper, we present a new framework for the global well-posedness and large-time behavior of a two-phase flow system,  which consists of the pressureless Euler equations and incompressible Navier-Stokes equations coupled through the drag force. To overcome the difficulties arising from the absence of the pressure term in the Euler equations, we establish the time decay estimates of the high-order derivative of the velocity to obtain uniform estimates of the fluid density. The upper bound decay rates are obtained by designing a new functional and the lower bound decay rates are achieved by selecting specific initial data. Moreover, the upper bound decay rates are the same order as the lower one. Therefore, the time decay rates are optimal. When the fluid density in the pressureless Euler flow vanishes, the system is reduced into an  incompressible Navier-Stokes flow. In this case, our works coincide with the classical results by  Schonbek \cite{M.S3} [JAMS,1991], which can be regarded as a generalization from a single fluid model to the two-phase fluid one. 
	\end{abstract}
	\noindent{\textbf{Key words:}\
		Pressureless Euler-Navier-Stokes system, large-time behavior, optimal decay rates.\\
		\textbf{2010 MR Subject Classification:}\  35B40, 35B65, 76N10\\
	\section{Introduction}
	This paper is concerned with the global well-posedness and large-time behavior of a coupled hydrodynamic system consisting of the pressureless Euler equations and incompressible Navier-Stokes equations coupled through the drag force,  which reads as 
	\begin{equation}\label{Main1}
		\left\{
		\begin{aligned}
			&\rho_t+\text{div}(\rho u)=0,\\
			&(\rho u)_t+\text{div}(\rho u\otimes u)=-\rho(u-v),\\
			&v_t+v\cdot\nabla v+\nabla P=\Delta v+\rho(u-v),\\
			&\text{div}v=0,
		\end{aligned}
		\right.
	\end{equation}
	with the initial data 
	\begin{equation}\label{ID}
		(\rho,u,v)|_{t=0}=(\rho_0,u_0,v_0).
	\end{equation}
	The unknown functions $\rho=\rho(t,x)$ and $u=u(t,x)$ denote the density and  velocity of the pressureless Euler fluid flow; $v=v(t,x)$ and $P=P(t,x)$ represent the velocity and   pressure of the incompressible Navier-Stokes fluid flow.
	
	This coupled pressureless Euler-Navier-Stokes (E-NS) system can be formally derived from the Vlasov-Navier-Stokes system, which describes the behavior of a large cloud of particles interacting with the incompressible fluid, in the case of mono-kinetic particle distributions. The details of the derivation can be referred to \cite{ChoiJung2021JMFM}. The main system \eqref{Main1} is closely related to the kinetic-fluid models, which have received increasing attention due to its wide range of applications, for instance, including medicine, biotechnology, combustion in diesel engines, and atmospheric pollution \cite{BJM2005,BDR2003CMS,O1981,IMF2006,William1958}.
	
	There has been important progress made recently on the well-posedness and dynamic behaviors of the solutions to the Euler-Navier-Stokes system and related models, refer to \cite{ls2003,GWZ2023,yz2019,MV2008,CYZ2016M3}  and the references therein. Concerning the Euler system coupled with compressible Navier-Stokes equations, Choi \cite{ChoiSIAM2016} proved the global well-posedness and exponential time decay rates of the classical solution in the periodic domain. Later, Wu-Zhang-Zou \cite{Wu2020} obtained the large-time behavior in $\mathbb{R}^3$ and Wu-Zhou \cite{Wu2021} studied the pointwise space-time behavior of the Cauchy problem to this system.
	
	Different from the Euler-Navier-Stokes system  mentioned above, the Euler system of \eqref{Main1} is pressureless. It is well-known that the pressureless Euler equations develop a finite-time formation of singularities, for instance, $\delta$-shock \cite{HW2001,EPD1996,BG1998,ES1996}. Owing to the drag force $\big\{\rho(u-v)\big\}$ from the Navier-Stokes flow, the densities of the pressureless Euler part in \eqref{Main1} become regular. Indeed, it was shown in \cite{ChoiJung2021JMFM,YoungChoil2022, Ha2014M3} that the density is smooth. In fact, under the assumption that 
	\begin{equation}\label{YJMFM} \|\rho_0\|_{H^s(\mathbb{R}^n)}+\|u_0\|_{H^{s+2}(\mathbb{R}^n)}+\|v_0\|_{H^{s+1}(\mathbb{R}^n)}+\|v_0\|_{L^1(\mathbb{R}^n)} \ll1,\quad \rho_0(x)>0,
	\end{equation}
	Choi and Jung \cite{ChoiJung2021JMFM} proved the global well-posedness of system \eqref{Main1}-\eqref{ID} in the whole space and obtained the following time decay rate
	\begin{equation}\label{Y1}
		\|u(t)\|_{H^{s+1}(\mathbb{R}^n)}+\|v(t)\|_{H^s(\mathbb{R}^n)}\leq C(1+t)^{-\alpha},\quad \forall~ 0<\alpha<\frac{n}{4}.
	\end{equation} 
	In this paper, under the assumption weaker than \eqref{YJMFM} that
	\begin{equation} \|\rho_0\|_{H^s(\mathbb{R}^n)}+\|u_0\|_{H^{s+2}(\mathbb{R}^n)}+\|v_0\|_{H^{s+1}(\mathbb{R}^n)}\ll1,\quad \rho_0(x)>0,
	\end{equation}
	we prove that the time decay rate in \eqref{Y1} holds for $\alpha=n/4$. 
	The precise statement of 
	the first main theorem is given as
	\begin{thm}\label{Th1}
		Let the integers $n\geq 3$ and $s\geq 2[\frac{n}{2}]+1$. Assume the initial data satisfy $\rho_0(x)>0$, $\rho_0(x)\in H^s(\mathbb{R}^n)\cap L^1(\mathbb{R}^n)$, $u_0\in H^{s+2}(\mathbb{R}^n)$, $v_0\in H^{s+1}(\mathbb{R}^n)\cap L^1(\mathbb{R}^n)$ and ${\rm{div}} v_0=0$. There exists a sufficiently small constant $\delta_0>0$ such that if 
		\begin{equation}
			\|\rho_0\|_{H^s(\mathbb{R}^n)}+\|u_0\|_{H^{s+2}(\mathbb{R}^n)}+\|v_0\|_{H^{s+1}(\mathbb{R}^n)}\leq \delta_0,
		\end{equation}
		then the pressureless E-NS system \eqref{Main1}-\eqref{ID} admits a global and unique  classical solution $(\rho,u,v)$,
		$$ 
		(\rho, u,v)(t,x)\in \mathcal{C}([0,+\infty);H^s(\mathbb{R}^n))\times \mathcal{C}([0,+\infty);H^{s+2}(\mathbb{R}^n))\times \mathcal{C}([0,+\infty);H^{s+1}(\mathbb{R}^n))
		$$
		satisfying $\rho(t,x)>0$ for all $(t,x)\in [0,+\infty)\times \mathbb{R}^n$ and 
		\begin{equation}\label{T1}
			\sup_{t\geq 0}(\|\rho(t,x)\|_{H^s((\mathbb{R}^n))}+\|u(t,x)\|_{H^{s+2}(\mathbb{R}^n)}+\|v(t,x)\|_{H^{s+1}(\mathbb{R}^n)})\leq C\delta_0.
		\end{equation}
		Meanwhile, it holds for $t\geq 0$ that 
		\begin{equation}\label{OPT}
			\|u(t)\|_{H^{s+1}(\mathbb{R}^n)}\leq C\mathcal{I}_0(1+t)^{-\frac{n}{4}},\quad \|v(t)\|_{H^{s+1}(\mathbb{R}^n)}\leq C\mathcal{I}_0(1+t)^{-\frac{n}{4}},
		\end{equation}
		where the positive constant $C$ is independent of time and $\mathcal{I}_0>0$ is given by
		\begin{equation}\label{mathcalI0}
			\mathcal{I}_0=\delta_0+\|\rho_0\|_{L^1}+\|v_0\|_{L^1},
		\end{equation}
		which is not necessarily small.
	\end{thm}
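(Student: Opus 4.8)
The plan is to derive the theorem from a local existence result via a continuity (bootstrap) argument in which the uniform energy bounds and the decay rates are propagated simultaneously; the entire difficulty lies in compensating for the missing pressure. First I would reformulate the Euler block: since $\rho>0$, combining the first two equations of \eqref{Main1} and using the continuity equation gives the damped transport equation $u_t+u\cdot\nabla u=v-u$, so the drag supplies a zeroth-order damping $-u$ in place of the absent pressure or diffusion. Local existence and uniqueness of a classical solution on a maximal interval $[0,T^{*})$, with $u$ two derivatives smoother than $\rho$ (the gap ensures $\nabla u$ controls the $H^{s}$-transport of $\rho$ with room to spare), follows from a standard iteration scheme and energy estimates in $H^{s}\times H^{s+2}\times H^{s+1}$; positivity of $\rho$ is propagated along the characteristics of $u$ as long as $\nabla u$ stays bounded.

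For the uniform estimates I would exploit the natural Lyapunov structure. Testing the two momentum equations against $u$ and $v$ and using $\mathrm{div}\,v=0$ yields the basic law
\[
\tfrac12\,\frac{d}{dt}\Big(\int\rho|u|^2\,dx+\|v\|_{L^2}^2\Big)+\|\nabla v\|_{L^2}^2+\int\rho|u-v|^2\,dx=0,
\]
whose $k$-th derivatives ($0\le k\le s+1$) I would combine into an inequality $\frac{d}{dt}\mathcal E+\mathcal D\le C\sqrt{\mathcal E}\,\mathcal D$, where $\mathcal E\simeq\|\rho\|_{H^{s}}^2+\|u\|_{H^{s+2}}^2+\|v\|_{H^{s+1}}^2$ and $\mathcal D$ collects $\|\nabla v\|_{H^{s+1}}^2$ together with the weighted relative velocity $\int\rho|u-v|^2$ and its derivatives; the commutators are controlled by Moser and Kato--Ponce estimates, and smallness of $\mathcal E$ absorbs the cubic remainder. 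The genuinely hard step is the \emph{uniform} density bound: the transport equation only gives $\|\rho(t)\|_{H^{s}}\le\|\rho_0\|_{H^{s}}\exp\big(C\int_0^t(\|\nabla u\|_{L^\infty}+\cdots)\,d\tau\big)$, so closing it requires $\int_0^\infty\|\nabla u(\tau)\|_{L^\infty}\,d\tau<\infty$, which for $n=3,4$ is \emph{not} delivered by the bare rate $(1+t)^{-n/4}$ and forces one to establish the sharper derivative decay $\|\nabla^k u\|_{L^2}\lesssim(1+t)^{-n/4-k/2}$ before the energy estimate can even be closed.

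To produce the decay I would run a Sch\"onbek-type Fourier-splitting argument on top of this energy--dissipation inequality. The obstruction peculiar to the pressureless phase is that $\mathcal D$ controls the relative velocity $u-v$ and $\nabla v$, but neither $u$ nor $v$ individually at low frequencies: the common drift of the two phases carries no damping and relaxes only through the heat operator, which in the constant-density linearization appears as the slow diffusive mode $\lambda_{+}\approx-|\xi|^2/(1+\rho)$ next to the fast relaxation $\lambda_{-}\approx-(1+\rho)$. Following the abstract's hint I would add a suitably weighted interaction functional (schematically $\eta\int\rho\,u\cdot v\,dx$, whose time derivative borrows the drag to supply $u$-coercivity) to convert the energy law into a closed inequality $\frac{d}{dt}\mathcal L+\frac{c}{1+t}\,\mathcal L\le\frac{C}{1+t}\int_{|\xi|^2\le(1+t)^{-1}}\big(|\widehat v|^2+|\widehat{u-v}|^2\big)\,d\xi$ with $\mathcal L\simeq\mathcal E$. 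On the low-frequency ball one has $|\widehat v(t,\xi)|\lesssim\|v_0\|_{L^1}+\int_0^t\big(\|\rho(u-v)\|_{L^1}+\|v\cdot\nabla v\|_{L^1}\big)\,d\tau\lesssim\mathcal I_0$, the $\|\rho_0\|_{L^1}$-term entering through the conserved mass in $\|\rho(u-v)\|_{L^1}$; the velocity $u$ carries no $L^1$ datum, but the Duhamel form $u(t)=e^{-t}u_0+\int_0^t e^{-(t-\tau)}\big(v-u\cdot\nabla u\big)(\tau)\,d\tau$ shows that $u$ is slaved to $v$ up to the exponentially small term $e^{-t}u_0$, so it inherits $v$'s rate even though $u_0\notin L^1$. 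A Gr\"onwall argument then gives $\mathcal L(t)\lesssim\mathcal I_0^2(1+t)^{-n/2}$, and repeating the splitting on the differentiated hierarchy upgrades this to the derivative decay $(1+t)^{-n/4-k/2}$ needed above.

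Finally I would assemble everything in one continuity argument: assuming on $[0,T]$ the combined ansatz $\sup_{t\le T}\mathcal E(t)\le\varepsilon$ together with the decay bounds \eqref{OPT}, the estimates above reproduce the same bounds with strictly smaller constants, whence $T^{*}=+\infty$ and both \eqref{T1} and \eqref{OPT} follow. I expect the main obstacle to be precisely the \emph{coupling} of the two ingredients --- the density bound needs the time-integrability of $\nabla u$, yet that decay is itself derived from energy bounds that presuppose a controlled density --- so the delicate point is to order the estimates and calibrate the ansatz so that this circularity is broken rather than merely assumed.
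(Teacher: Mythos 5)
Your overall architecture matches the paper's: local existence plus a continuity/bootstrap argument, the reformulation $u_t+u\cdot\nabla u+u-v=0$ (valid since $\rho>0$), Fourier splitting for $v$ with the low-frequency bound $|\hat v(t,\xi)|\lesssim \|v_0\|_{L^1}+\int_0^t\|\rho(u-v)\|_{L^1}d\tau+\cdots$ in which mass conservation supplies the $\|\rho_0\|_{L^1}$ contribution to $\mathcal{I}_0$, and the damping/Duhamel argument transferring $v$'s rate to $u$ despite $u_0\notin L^1$. You also correctly isolate the central difficulty: the $H^s$ bound for $\rho$ needs $\int_0^\infty\|\nabla u\|_{L^\infty}dt<\infty$, which the bare rate $(1+t)^{-n/4}$ does not deliver for $n=3,4$.

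However, your proposed resolution of this circularity is a genuine gap. You claim one is ``forced'' to prove the parabolic hierarchy $\|\nabla^k u\|_{L^2}\lesssim(1+t)^{-n/4-k/2}$ and that it follows by ``repeating the splitting on the differentiated hierarchy.'' Neither half holds. First, such per-derivative gain is very unlikely to be provable for this system: in the $\nabla^k v$ energy (or Duhamel) estimate the drag source $\nabla^k(\rho(u-v))$ contains terms like $\|\rho\|_{L^\infty}\|\nabla^k(u-v)\|_{L^2}$ and $\|\nabla^k\rho\|_{L^2}\|u-v\|_{L^\infty}$, and $\rho$ does not decay at all (the paper explicitly notes no decay for $\rho$ can be expected); so this source decays no faster than the relative velocity itself, with no gain in $k$. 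Trying to give $u-v$ enhanced decay through its damped equation \eqref{u-v2} requires enhanced decay of $\Delta v$, i.e.\ of two more derivatives of $v$ --- a ladder that loses derivatives at each step and cannot close on a finite regularity scale. Consistently, the paper's Proposition \ref{p3} proves that \emph{all} derivatives up to order $s+1$ decay at the common rate $(1+t)^{-n/4}$, with no gain per derivative. Second, the enhanced decay is not needed: the paper breaks the circularity with the time-weighted dissipation estimate of Proposition \ref{p6}, $\int_0^t(1+\tau)^{\frac98}\|\nabla u(\tau)\|_{H^s}^2d\tau\le C(\mathcal{I}_0^2+\mathcal{I}_0^3)$, obtained by multiplying the energy--dissipation identities by $(1+\tau)^{9/8}$ and inducting on the derivative level (the dissipation $\|\nabla^{k+1}v\|_{L^2}^2$ at level $k$ furnishes the weighted integral needed at level $k+1$, while the $u$ and $u-v$ equations carry full damping). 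Cauchy--Schwarz then gives
\begin{equation*}
\int_0^t\|\nabla u\|_{H^s}d\tau\le\Big(\int_0^t(1+\tau)^{-\frac98}d\tau\Big)^{\frac12}\Big(\int_0^t(1+\tau)^{\frac98}\|\nabla u\|_{H^s}^2d\tau\Big)^{\frac12}\le C(\mathcal{I}_0^2+\mathcal{I}_0^3)^{\frac12},
\end{equation*}
which is exactly what the Gr\"onwall bound for $\|\rho\|_{H^s}$ and the characteristics-based positivity of $\rho$ require; the same weighted-in-time device (not pointwise enhanced decay) is also what underlies the bound $\int_0^t\|\rho(u-v)\|_{L^1}d\tau\lesssim\mathcal{I}_0$ that you asserted for the low-frequency estimate. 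As written, the key step of your argument would fail; replacing the enhanced-decay step by these integrated weighted estimates is the repair.
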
	
	Indeed, the lower bound of $\|v\|_{L^2}$ is achieved as the same order as the upper one given in  \eqref{OPT} by selecting special initial data. The precise statement of the work is given as 
	\begin{thm}\label{Th2}
		Assume the same conditions in Theorem \ref{Th1} hold. There exists a sufficiently small constant $\delta_0>0$ such that if 
		\begin{equation} \label{th2a}
			\|\rho_0\|_{H^s(\mathbb{R}^n)}+\|u_0\|_{H^{s+2}(\mathbb{R}^n)}+\|v_0\|_{H^{s+1}(\mathbb{R}^n)}+\|v_0\|_{L^1(\mathbb{R}^n)}\leq \delta_0,
		\end{equation}
		and the Fourier transform of the initial velocity $\hat{v}_0(\xi)$ satisfies
		\begin{equation}\label{A1}
			\inf_{|\xi|\leq 1}|\hat{v}_0(\xi)|\geq \delta_0^{\frac{3}{2}},
		\end{equation} 
		then it holds for large time that 
		\begin{equation}\label{optimal}
			\frac{1}{2}\delta_0^{\frac{3}{2}}(1+t)^{-\frac{n}{4}}\leq \|v(t)\|_{L^2(\mathbb{R}^n)} \leq C\mathcal{I}_0(1+t)^{-\frac{n}{4}},
		\end{equation}
		where the constant $C>0$ is independent of time.
	\end{thm}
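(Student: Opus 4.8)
The upper bound in \eqref{optimal} requires no new work: since $\|v\|_{L^2}\le\|v\|_{H^{s+1}}$, the estimate \eqref{OPT} of Theorem~\ref{Th1} already gives $\|v(t)\|_{L^2}\le C\mathcal I_0(1+t)^{-n/4}$. The entire content of Theorem~\ref{Th2} is therefore the matching lower bound, and my plan is to show that the large-time behaviour of $v$ is governed by the heat flow of its own initial datum, whose low frequencies are pinned from below by \eqref{A1}. Applying the Leray projection $\mathbb P$ to the third equation of \eqref{Main1} removes the pressure, and Duhamel's formula in the Fourier variable reads
\[
\widehat v(t,\xi)=e^{-t|\xi|^2}\widehat v_0(\xi)+\int_0^t e^{-(t-\tau)|\xi|^2}\,\widehat{\mathbb P G}(\tau,\xi)\,d\tau,\qquad G:=-\,\mathrm{div}(v\otimes v)+\rho(u-v),
\]
where $\mathrm{div}\,v=0$ has been used. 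By Plancherel's theorem and the triangle inequality,
\[
\|v(t)\|_{L^2}\ \ge\ \big\|e^{-t|\xi|^2}\widehat v_0\big\|_{L^2}-\Big\|\int_0^t e^{-(t-\tau)|\xi|^2}\,\widehat{\mathbb P G}(\tau,\cdot)\,d\tau\Big\|_{L^2}\ =:\ \mathcal L(t)-\mathcal N(t).
\]

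For the linear term I would restrict the frequency integral to the unit ball and invoke \eqref{A1}:
\[
\mathcal L(t)^2\ \ge\ \int_{|\xi|\le1}e^{-2t|\xi|^2}|\widehat v_0(\xi)|^2\,d\xi\ \ge\ \delta_0^{3}\int_{|\xi|\le1}e^{-2t|\xi|^2}\,d\xi .
\]
The rescaling $\eta=\sqrt{2t}\,\xi$ turns the last integral into $(2t)^{-n/2}\int_{|\eta|\le\sqrt{2t}}e^{-|\eta|^2}\,d\eta$, which for large $t$ is as close as we please to $(2t)^{-n/2}\pi^{n/2}$. Hence $\mathcal L(t)\ge c_n\,\delta_0^{3/2}t^{-n/4}\ge c_n\,\delta_0^{3/2}(1+t)^{-n/4}$ with $c_n$ arbitrarily close to $(\pi/2)^{n/4}>1$ for $n\ge3$; this fixed gain above $1$ is what will leave room to absorb the remainder.

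The heart of the matter is to show that $\mathcal N(t)$ is dominated by $\mathcal L(t)$ for large time. The quadratic part is harmless: written as $\mathrm{div}(v\otimes v)$ it produces a factor $|\xi|$ in Fourier, and together with $\|v\otimes v\|_{L^1}\le\|v\|_{L^2}^2\le C\mathcal I_0^2(1+\tau)^{-n/2}$ from \eqref{OPT} the standard low-/high-frequency split yields a contribution of order $(1+t)^{-n/4-1/2}$, strictly faster than the linear rate. The genuinely delicate term is the drag force $\rho(u-v)$, which carries no derivative and therefore does not vanish at $\xi=0$; a crude bound gives only the borderline rate $(1+t)^{-n/4}$ with the large coefficient $\delta_0\mathcal I_0$, which would be fatal since $\mathcal I_0$ need not be small. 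The resolution I would use is the momentum structure of \eqref{Main1}: integrating the second equation in $x$ gives $\frac{d}{dt}\int\rho u\,dx=-\int\rho(u-v)\,dx$, and since $\|\rho u(t)\|_{L^1}\le\|\rho\|_{L^2}\|u\|_{L^2}\to0$ by \eqref{OPT}, the \emph{total drag impulse} satisfies
\[
\int_0^\infty\!\!\int_{\mathbb R^n}\rho(u-v)\,dx\,d\tau=\int_{\mathbb R^n}\rho_0u_0\,dx,\qquad\Big|\int_{\mathbb R^n}\rho_0u_0\,dx\Big|\le\|\rho_0\|_{L^2}\|u_0\|_{L^2}\le\delta_0^2 .
\]
Thus the $\xi=0$ value of the drag source, which is exactly what drives the $(1+t)^{-n/4}$ part of its Duhamel contribution, integrates in time to the small vector $\int\rho_0u_0\,dx=O(\delta_0^2)$, so this part is bounded by $C\delta_0^2(1+t)^{-n/4}$. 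The complementary increment $\widehat{\rho(u-v)}(\tau,\xi)-\widehat{\rho(u-v)}(\tau,0)$ carries an extra factor $|\xi|$, and controlling it, together with the semigroup increment $e^{-(t-\tau)|\xi|^2}-e^{-t|\xi|^2}$, needs the enhanced decay of the relative velocity: because the drag term acts as a linear damping on $u-v$ (from $u_t+u\cdot\nabla u=-(u-v)$ one reads off $\frac{D}{Dt}(u-v)+(1+\rho)(u-v)=-\Delta v+\cdots$, whose forcing decays faster than $v$ itself), $\|(u-v)(\tau)\|_{L^2}$ decays faster than $\|u\|_{L^2},\|v\|_{L^2}$, rendering this error $o\big((1+t)^{-n/4}\big)$.

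Assembling the pieces, for large $t$ one has $\mathcal N(t)\le\big(C\delta_0^2+\varepsilon(t)\big)(1+t)^{-n/4}$ with $\varepsilon(t)\to0$, whence
\[
\|v(t)\|_{L^2}\ \ge\ \big(c_n\delta_0^{3/2}-C\delta_0^2-\varepsilon(t)\big)(1+t)^{-n/4}\ \ge\ \tfrac12\,\delta_0^{3/2}(1+t)^{-n/4},
\]
the last inequality holding once $\delta_0$ is small enough that $c_n-C\delta_0^{1/2}\ge\tfrac34$ and $t$ is large enough that $\varepsilon(t)\le\tfrac14\delta_0^{3/2}$, which is precisely the ``for large time'' qualification of the statement. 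The single real obstacle is the non-divergence drag term; everything turns on recognizing that its low-frequency mass is a conserved momentum of size $\delta_0^2$, negligible next to the $\delta_0^{3/2}$ floor furnished by \eqref{A1}, with the enhanced decay of $u-v$ handling the residual frequency increment.
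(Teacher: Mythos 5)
Your skeleton is in fact the paper's own, in Fourier clothing: your $\mathcal L(t)$ is exactly $\|w(t)\|_{L^2}$ for the heat flow $w$ of \eqref{we}, your $\mathcal N(t)$ bounds the paper's difference $q=v-w$, and the low-frequency floor from \eqref{A1} is Lemma \ref{lem3}. You also identify the correct crux: the drag force $\rho(u-v)$ has no divergence structure, so its low-frequency Duhamel contribution is controlled only by $\int_0^t\|\rho(u-v)(\tau)\|_{L^1}\,d\tau$, and this must come out $O(\delta_0^2)$ rather than $O(\delta_0\mathcal I_0)$. But the two steps you use to achieve this do not close. First, the claim that the increment $\widehat{\rho(u-v)}(\tau,\xi)-\widehat{\rho(u-v)}(\tau,0)$ ``carries an extra factor $|\xi|$'' is unjustified: since $|e^{-ix\cdot\xi}-1|\le\min(2,|x||\xi|)$, extracting any positive power of $|\xi|$ requires a spatial moment bound such as $\||x|\,\rho(u-v)(\tau)\|_{L^1_x}$, and no quantity in your argument (or in the paper's framework) controls spatial moments. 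Without it the increment is only bounded by $2\|\rho(u-v)(\tau)\|_{L^1}$, and you are back to needing smallness of its time integral --- precisely what the momentum trick was supposed to bypass. The same problem afflicts your semigroup increment: $|e^{-(t-\tau)|\xi|^2}-1|\le(t-\tau)|\xi|^2$ gives no gain when $|\xi|^2\sim 1/t$ and $t-\tau\sim t$.

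Second, the ``enhanced decay of the relative velocity'' is asserted, not proven, and the heuristic offered for it is wrong within the available estimates: the forcing $-\Delta v$ in the equation for $u-v$ is \emph{not} known to decay faster than $v$; the paper only ever establishes $\|\nabla v(t)\|_{H^s}\le C\mathcal I_0(1+t)^{-n/4}$, the same rate as $\|v\|_{L^2}$. What is actually true, and what constitutes the technical heart of the paper's Proposition \ref{p5} (the chain \eqref{041102}--\eqref{041108}), is the weighted integrated bound $\int_0^t(1+\tau)^{9/8}\|(u-v)(\tau)\|_{L^2}^2\,d\tau\le C\delta_0^2$, which by H\"older yields $\int_0^t\|\rho(u-v)\|_{L^1}\,d\tau\le C\delta_0^2$. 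That chain is launched from the Choi--Jung estimate \eqref{choi3.1}, whose coefficient is small \emph{only because} hypothesis \eqref{th2a} includes $\|v_0\|_{L^1}\le\delta_0$; your proposal never invokes this extra hypothesis, which is a telling sign that the hard step has been skipped (it is the very reason Theorem \ref{Th2} assumes more than Theorem \ref{Th1}). Your momentum-conservation observation is genuinely nice and could be developed into an alternative to part of the paper's argument --- substitute $\rho(u-v)=-(\rho u)_t-\mathrm{div}(\rho u\otimes u)$ into Duhamel and integrate by parts in $\tau$, so that the dangerous term becomes boundary terms of size $\|\rho u\|_{L^1}\le\|\rho\|_{L^2}\|u\|_{L^2}$ plus terms carrying explicit factors of $|\xi|$ or $|\xi|^2$, no moments needed --- but that is not what is written, and even then the high-frequency part of $\mathcal N(t)$ (which the paper handles through the Fourier-splitting ODE for $H(t)=\int\rho|u|^2dx+\|q\|_{L^2}^2$, not by a direct Duhamel bound) is left untreated in your proposal. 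As it stands, the proof has a genuine gap.
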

	\begin{rem}
		In Theorem \ref{Th1},  the smallness condition  for $\|v_0\|_{L^1}$ in \eqref{YJMFM} is removed and the decay rate for $\|\nabla^{s+1}v\|_{L^2}$ is further derived in \eqref{OPT} while only $\|\nabla^sv\|_{L^2}$ is obtained in \eqref{Y1}.
	\end{rem}
	\begin{rem}
		From \eqref{optimal}, the decay rate of $\|v\|_{L^2}$ obtained in Theorem \ref{Th1} and \ref{Th2} is optimal.
	\end{rem}
	
	\begin{rem}
		In the special case of $\rho=0$, Theorem \ref{Th2} coincides with the resut \cite{M.S3} [JAMS,1991].
	\end{rem}
	
	\begin{rem}
		Due to the absence of the pressure term in the Euler equations on \eqref{Main1}, it is not expected to obtain  the time decay rate of the fluid density $\rho(t,x)$.
	\end{rem}
	
	Now we sketch the main ideas of Theorem \ref{Th1}. Let us first recall \cite{ChoiJung2021JMFM} in which the authors derived the basic energy inequality 
	\begin{equation}\label{32703}
		\frac{d}{dt} E(t)+D(t)\leq 0, \quad E(t)= \int_{\mathbb{R}^n} \rho|u|^2dx+\|v\|_{L^2}^2,~D(t)=\int_{\mathbb{R}^n}\rho|u-v|^2dx+\|\nabla v\|_{L^2}^2.
	\end{equation}
	Then by very careful estimates, they obtained that for any $\alpha \in (0,n/4)$, 
	\begin{equation}\label{choi3.1}
		E(t)(1+t)^{2\alpha}+\int_0^t(1+\tau)^{2\alpha} D(\tau)d\tau\leq C(E(0)+\|v_0\|_{L^1}^2),
	\end{equation}
	which gives the decay rates in \eqref{Y1} provided that $E(0)+\|v_0\|_{L^1}^2$ is sufficiently small. 
	
	In order to show that the decay rate in \eqref{Y1} still holds for $\alpha=n/4$, we observe from the basic energy inequality \eqref{32703} that 
	
	\begin{equation}\label{32201}
		\int_0^t\|\rho(u-v)(\tau)\|_{L^1}d\tau\leq 2\|\rho_0\|_{L^1}^{\frac{1}{2}}\big(10\varepsilon_0 M(t)+2\varepsilon_1E(0)\big)^{\frac{1}{2}},
	\end{equation}
	holds for some constants $\varepsilon_0$ and $\varepsilon_1$ through an elaborate weighted estimate, where  $M(t)$ represents the energy functional given in \eqref{MTT}.  Noting that $\eqref{Main1}_{3}$ and  $\eqref{Main1}_{4}$ are similar to the incompressible Navier-Stokes equations with source term, we adopt Fourier splitting method as in \cite{M.S1, zgh2011, M.S3} to obtain
	\begin{equation}\label{32102}
		\|v(t)\|_{L^2}\leq C\mathcal{I}_0(1+t)^{-\frac{n}{4}},\quad 
		\mathcal{I}_0=\delta_0+\|\rho_0\|_{L^1}+\|v_0\|_{L^1}.
	\end{equation}
	
	Noting also that $\eqref{Main1}_{2}$ is equivalent to  
	\begin{equation}\label{newu11}
		u_t+u\cdot\nabla u+u-v=0,	
	\end{equation}
	if the density $\rho(t,x)$ is away from vacuum. Then, it yields the time decay rate of $\|u\|_{L^2}$ by the damping structure as well as \eqref{32102}. With the decay rates of $\|(u,v)\|_{L^2}$ in hand, the decay estimates of $\|\nabla(u,v)\|_{H^{s}}$ can be derived in a simialr process. Hence we obtain  
	\begin{equation}\label{optimalD}
		\|(u,v)(t)\|_{H^{s+1}(\mathbb{R}^n)}\leq C\mathcal{I}_0(1+t)^{-\frac{n}{4}}.
	\end{equation}
	It should be remarked that $\mathcal{I}_0$ is not small, thus we can remove the smallness condition for $\|v_0\|_{L^1}$. Thanks to \eqref{optimalD}, we apply $L^2$ energy method to obtain that
				$$
				\|\rho(t)\|_{H^s}^2+	\|u(t)\|_{H^{s+2}}^2+\|v(t)\|_{H^{s+1}}^2+\int_0^t(\| \nabla u(\tau)\|_{H^{s+1}}^2+\|\nabla v(\tau)\|_{H^{s+1}}^2)d\tau
				\leq C\delta_0^2,
				$$
				which closes the a priori assumption \eqref{a-priori est}.
				
				The key point to prove  Theorem \ref{Th2} is the lower bound decay estimate of $\|v\|_{L^2}$. To achieve the goal, we first consider the heat equation $w_t-\Delta w=0$ with the initial data $w_0=v_0$. It is easy to verify from \eqref{A1} that $\delta_0^{\frac{3}{2}}(1+t)^{-\frac{n}{4}}$ is the lower bound of $\|w\|_{L^2}$. Then, under the assumptions in \eqref{th2a}, we can apply \eqref{choi3.1} to show
				\begin{equation}\label{041203}
					\int_0^t\|\rho(u-v)(\tau)\|_{L^1}d\tau\leq C\delta_0^2.
				\end{equation}
				Note that the main part of the system $\eqref{Main1}_3$-$\eqref{Main1}_4$ for $v$ is the incompressible Navier-Stokes equations whose large time behavior is the same as the one of the heat equation, we can further prove that 
				$$\|(v-w)(t)\|_{L^2}\leq C\delta_0^2(1+t)^{-\frac{n}{4}}. 
				$$
				Then by the triangle inequality and the smallness of $\delta_0$ imply the lower bound of $\|v\|_{L^2}$  is the same order as $\|w\|_{L^2}$, i.e. \eqref{optimal}.

				The rest of the paper is organized as follows. In Section \ref{Sec2}, we provide some notations and auxiliary lemmas. In Section \ref{S3}, we study the global well-posedness and upper bound of the decay rates to the pressureless E-NS system. In Section \ref{S4}, we establish the optimal decay rate of the pressureless E-NS system.

				\section{Preliminaries}\label{Sec2}
				\subsection{Notations}
				\hspace{2em}In this section, we first introduce the notation and conventions used throughout the paper.
				$L^p(\mathbb{R}^n)$ and $W^{k,p}(\mathbb{R}^n)$ denote the usual Lebesgue and Sobolev space on $\mathbb{R}^n$, with norms $\|\cdot\|_{L^p}$ and $\|\cdot\|_{W^{k,p}}$, respectively.  When $p=2$, we denote $W^{k,p}(\mathbb{R}^n)$ by $H^k(\mathbb{R}^n)$ with the norm $\|\cdot\|_{H^k}$, and set
				$$
				\|u\|_{H^k(\mathbb{R}^n)}=\|u\|_{H^k},\quad \|u\|_{L^p(\mathbb{R}^n)}=\|u\|_{L^p}.
				$$
				The symbol $[ \cdot ]$ represents the floor function, i.e., $[n/2]$ is the greatest integer less than or equal to $n/2$. We denote by $C$ a generic positive constant which may vary in different estimates. For an integer $k$,  the symbol $\nabla^k$ denotes the summation of all terms $D^\ell=\partial_1^{\ell_1}\partial_2^{\ell_2}...\partial_n^{\ell_n}$ with the multi-index $\alpha$ satisfying $|\ell|=\ell_1+\ell_2+...+\ell_n=k$. For a function $f$, $\|f\|_{X}$ represents the norm of $f$ on $X$, $\|(f,g)\|_{X}$ denotes $\|f\|_{X}+\|g\|_{X}$. The Fourier transform of $f$ is denoted by $\hat{f}$ or $\mathscr{F}[f]$ satisfying
				\begin{equation}\hat{f}(\xi)
					=\mathscr{F}[f](\xi)
					=(2\pi)^{-\frac{n}{2}}\int_{\mathbb{R}^n} f(x)e^{-ix\cdot\xi}dx,\quad \xi\in\mathbb{R}^n.
				\end{equation}
				The inverse Fourier transform of $f$ is denoted by $\check{f}$ or $\mathscr{F}^{- 1}[f]$ such that 
				\begin{equation}
					\check{f}(x)=\mathscr{F}^{- 1}[f](x)=(2\pi)^{-\frac{n}{2}}\int_{\mathbb{R}^n}  f(\xi)e^{i\xi\cdot x}d\xi,\quad x\in\mathbb{R}^n.
				\end{equation}
				\subsection{Auxiliary lemmas}
				In the next part, we review some elementary inequalities and important lemmas that are used extensively in this paper. We first recall the following commutator estimates and Sobolev inequality, see \cite[Lemma 3.4]{mb2002}.
				\begin{lem}\label{lem1}
					(1) For any integer $k\geq1$ and the pair of functions $f,g\in H^k(\mathbb{R}^n)\cap L^{\infty}(\mathbb{R}^n)$, we have
					\begin{equation}
						\|\nabla^k(fg)\|_{L^2}\leq C \|f\|_{L^{\infty}}\|\nabla^kg\|_{L^2}+C\|\nabla^kf\|_{L^2}\|g\|_{L^{\infty}}.
					\end{equation}
					Moreover if $\nabla f\in L^{\infty}(\mathbb{R}^n)$, we obtain
					\begin{equation}
						\big\|\nabla^{k}(fg)-f\nabla^{k}g\big\|_{L^2}\leq C \|\nabla f\|_{L^{\infty}}\|\nabla^{k-1}g\|_{L^2}+C\|\nabla^kf\|_{L^2}\|g\|_{L^{\infty}}.
					\end{equation}
					(2) Let integers $n\geq 3$ and $s\geq 2\left[\frac{n}{2}\right]+1.$ If $f\in H^{s}(\mathbb{R}^n)$, then it holds the inequality 
					\begin{eqnarray}\label{L1}
						\|f\|_{L^{\infty}}\leq C\|\nabla f\|_{H^{[\frac{n}{2}]}}\leq C\|\nabla f\|_{H^{s-2}}.
					\end{eqnarray}
				\end{lem}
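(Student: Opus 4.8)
The plan is to treat the two parts of the lemma separately, since part (1) is a Moser-type product and commutator estimate while part (2) is a Sobolev embedding refined so that only the gradient of $f$ appears on the right-hand side.

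For the first inequality of part (1) I would start from the Leibniz rule, writing $\nabla^k(fg)$ as a finite sum $\sum_{j=0}^{k} c_j\,\nabla^j f\,\nabla^{k-j}g$ with combinatorial constants $c_j$. For each summand I apply H\"older's inequality with the conjugate exponents $p_j=2k/j$ and $q_j=2k/(k-j)$ (so that $1/p_j+1/q_j=1/2$), followed by the Gagliardo--Nirenberg interpolation inequalities
\begin{equation}
\|\nabla^j f\|_{L^{2k/j}}\leq C\|f\|_{L^\infty}^{1-j/k}\|\nabla^k f\|_{L^2}^{j/k},\qquad \|\nabla^{k-j}g\|_{L^{2k/(k-j)}}\leq C\|g\|_{L^\infty}^{j/k}\|\nabla^k g\|_{L^2}^{1-j/k}.
\end{equation}
Setting $A=\|f\|_{L^\infty}\|\nabla^k g\|_{L^2}$ and $B=\|\nabla^k f\|_{L^2}\|g\|_{L^\infty}$, each term is then bounded by $A^{1-j/k}B^{j/k}$, and Young's inequality in the form $A^{1-\theta}B^{\theta}\leq A+B$ collapses the finite sum to $C(A+B)$, which is exactly the claimed bound.

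For the commutator estimate I would use that subtracting $f\nabla^k g$ simply deletes the $j=0$ term, leaving $\sum_{j=1}^{k}c_j\,\nabla^j f\,\nabla^{k-j}g$, in which at least one derivative always falls on $f$. Writing $\nabla^j f=\nabla^{j-1}(\nabla f)$ and repeating the H\"older and Gagliardo--Nirenberg argument with the exponent $k-1$ replacing $k$ (now interpolating $\nabla f$ against $\|\nabla f\|_{L^\infty}$, and using $\nabla^{k-1}(\nabla f)=\nabla^k f$) yields, with $\theta=(j-1)/(k-1)$, the bound $\tilde A^{1-\theta}\tilde B^{\theta}$ for $\tilde A=\|\nabla f\|_{L^\infty}\|\nabla^{k-1}g\|_{L^2}$ and $\tilde B=\|\nabla^k f\|_{L^2}\|g\|_{L^\infty}$; summing over $j$ gives the stated inequality.

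For part (2), I would argue on the Fourier side, starting from $\|f\|_{L^\infty}\leq C\|\hat f\|_{L^1}$ and splitting the frequency integral over $\{|\xi|\leq 1\}$ and $\{|\xi|>1\}$. Using $|\hat f(\xi)|=|\widehat{\nabla f}(\xi)|/|\xi|$ together with Cauchy--Schwarz, the low-frequency part is controlled by $\big(\int_{|\xi|\leq 1}|\xi|^{-2}d\xi\big)^{1/2}\|\nabla f\|_{L^2}$, and the high-frequency part by $\big(\int_{|\xi|>1}|\xi|^{-2}(1+|\xi|^2)^{-[n/2]}d\xi\big)^{1/2}\|\nabla f\|_{H^{[n/2]}}$. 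The decisive point is the convergence of these two integrals: $\int_{|\xi|\leq 1}|\xi|^{-2}d\xi$ is finite precisely because $n\geq 3$, while $\int_{|\xi|>1}|\xi|^{-2-2[n/2]}d\xi$ converges because $2+2[n/2]>n$ for every $n$. Combining gives $\|f\|_{L^\infty}\leq C\|\nabla f\|_{H^{[n/2]}}$, and the final inequality follows from $[n/2]\leq s-2$, which is guaranteed by $s\geq 2[n/2]+1$ and $n\geq 3$. The routine portions here are the Leibniz bookkeeping and the interpolation invocations; the one genuinely load-bearing step is the low-frequency estimate in part (2), where the hypothesis $n\geq 3$ is exactly what makes $\int_{|\xi|\leq 1}|\xi|^{-2}d\xi$ integrable and thus what permits replacing the $L^2$-norm of $f$ by that of its gradient.
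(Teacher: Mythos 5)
Your proof is correct. One point of comparison is essential here: the paper does not prove this lemma at all --- it is quoted directly from Majda--Bertozzi \cite[Lemma 3.4]{mb2002}, so there is no in-paper argument to match against, and what your write-up buys is that the lemma becomes self-contained rather than cited. Your part (1) is precisely the standard argument behind that citation: Leibniz expansion, H\"older with the conjugate exponents $2k/j$ and $2k/(k-j)$, the Gagliardo--Nirenberg interpolation inequalities, and Young's inequality to collapse $A^{1-j/k}B^{j/k}$ into $A+B$; the commutator variant, where deleting the $j=0$ term lets you peel one derivative off $f$ and interpolate at level $k-1$ with $\theta=(j-1)/(k-1)$, is likewise the textbook route (only note the degenerate case $k=1$, where the sum reduces to the single term $\nabla f\, g$ and is bounded directly by $\|\nabla f\|_{L^2}\|g\|_{L^\infty}$ without any interpolation). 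Your part (2) gives a clean Fourier-splitting proof of $\|f\|_{L^\infty}\le C\|\nabla f\|_{H^{[n/2]}}$, and you correctly identify all three load-bearing facts: the low-frequency integral $\int_{|\xi|\le 1}|\xi|^{-2}\,d\xi$ converges exactly when $n\ge 3$ (this is where the hypothesis enters and why $\|f\|_{L^2}$ itself never appears on the right-hand side); the high-frequency integral converges because $2+2[n/2]>n$ in every dimension; and $[n/2]\le s-2$ follows from $s\ge 2[n/2]+1$ combined with $[n/2]\ge 1$, which justifies the final inequality $\|\nabla f\|_{H^{[n/2]}}\le C\|\nabla f\|_{H^{s-2}}$.
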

				
				Let $w(t,x)$ be the solution of the heat equation with initial data $w(0,x)=v_0$ in $(t,x)\in \mathbb{R}_+\times\mathbb{R}^n$, which reads
				\begin{equation}\label{we}
					\left\{
					\begin{aligned}
						&	w_t-\Delta w=0,\\	
						&w(0,x)=v_0(x).
					\end{aligned}
					\right.
				\end{equation}
				\begin{lem}\label{lem2}
					Let integers $n\geq 3$ and $s\geq 2[\frac{n}{2}]+1$. Assume $w(t,x)$ is the solution of \eqref{we},  then it holds for $t\geq 0$ that
					$$
					\|w(t,x)\|_{L^\infty}\leq C(1+t)^{-\frac{n}{2}}(\|v_0\|_{L^1}+\|\nabla v_0\|_{H^{s-2}}).
					$$
				\end{lem}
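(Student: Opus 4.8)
The plan is to pass to the Fourier side, where the heat flow acts as multiplication by $e^{-|\xi|^2 t}$, split the frequency integral into the regions $|\xi|\le 1$ and $|\xi|>1$, and treat small and large times separately. Taking the Fourier transform of \eqref{we} gives $\hat w(t,\xi)=e^{-|\xi|^2 t}\hat v_0(\xi)$, so by the inversion formula
\[
\|w(t)\|_{L^\infty}\leq (2\pi)^{-\frac n2}\int_{\mathbb R^n}e^{-|\xi|^2 t}\,|\hat v_0(\xi)|\,d\xi .
\]
This representation will be the starting point for the large-time regime.

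For $0\le t\le 1$ I would avoid the Fourier integral entirely. Since the heat semigroup commutes with $\nabla$ and is a contraction on $L^2(\mathbb R^n)$, Lemma \ref{lem1}(2) yields
\[
\|w(t)\|_{L^\infty}\le C\|\nabla w(t)\|_{H^{s-2}}\le C\|\nabla v_0\|_{H^{s-2}},
\]
and because $(1+t)^{-n/2}\ge 2^{-n/2}$ on $[0,1]$, this already delivers the asserted bound on this interval, using only the $\|\nabla v_0\|_{H^{s-2}}$ contribution.

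For $t\ge 1$ I would split the Fourier integral at $|\xi|=1$. On the low-frequency piece I use the elementary bound $|\hat v_0(\xi)|\le (2\pi)^{-n/2}\|v_0\|_{L^1}$ together with $\int_{|\xi|\le 1}e^{-|\xi|^2 t}\,d\xi\le\int_{\mathbb R^n}e^{-|\xi|^2 t}\,d\xi=(\pi/t)^{n/2}\le C(1+t)^{-n/2}$, which is exactly where the $\|v_0\|_{L^1}$ term enters. On the high-frequency piece I exploit that $|\xi|^2\ge 1$: for $t\ge 1$ one has the factorization $e^{-|\xi|^2 t}\le e^{-|\xi|^2/2}\,e^{-t/2}$, so Cauchy--Schwarz combined with the pointwise inequality $|\hat v_0|^2\le |\xi|^2|\hat v_0|^2$ on this region gives
\[
\int_{|\xi|>1}e^{-|\xi|^2 t}|\hat v_0|\,d\xi\le Ce^{-t/2}\Big(\int_{|\xi|>1}|\xi|^2|\hat v_0|^2\,d\xi\Big)^{1/2}\le Ce^{-t/2}\|\nabla v_0\|_{L^2}.
\]
Since $e^{-t/2}\le C(1+t)^{-n/2}$ and $\|\nabla v_0\|_{L^2}\le\|\nabla v_0\|_{H^{s-2}}$, summing the two regions closes the estimate for $t\ge 1$; together with the small-time case this proves the lemma.

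The point requiring care is obtaining the full rate $(1+t)^{-n/2}$ on the high-frequency region. A naive Cauchy--Schwarz applied to the whole Fourier integral with a polynomial weight only produces $(1+t)^{-n/4}$, because the square root halves the Gaussian decay; the sharp rate instead comes from realizing that at frequencies $|\xi|>1$ the multiplier $e^{-|\xi|^2 t}$ decays \emph{exponentially} in $t$, which is what the factorization above isolates. This exponential bound is valid only for $t\ge1$, which is precisely why the separate small-time argument via Lemma \ref{lem1}(2) is needed.
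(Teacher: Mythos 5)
Your proof is correct, but your large-time argument differs from the paper's. For $t>1$ the paper simply writes out the Gaussian kernel in physical space, $w(t,x)=(4\pi t)^{-n/2}\int e^{-|x-y|^2/4t}v_0(y)\,dy$, and bounds the supremum by $(4\pi t)^{-n/2}\|v_0\|_{L^1}$ in one line, so the high frequencies never need separate treatment and the large-time bound uses only $\|v_0\|_{L^1}$. You instead stay on the Fourier side, split at $|\xi|=1$, recover the $\|v_0\|_{L^1}$ contribution from the low frequencies via $|\hat v_0|\le(2\pi)^{-n/2}\|v_0\|_{L^1}$, and absorb the high frequencies by the factorization $e^{-|\xi|^2t}\le e^{-|\xi|^2/2}e^{-t/2}$ plus Cauchy--Schwarz, at the price of an extra $\|\nabla v_0\|_{L^2}$ term --- harmless, since the stated bound carries $\|\nabla v_0\|_{H^{s-2}}$ anyway. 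Your factorization step is the right fix for the loss a naive weighted Cauchy--Schwarz would incur, and your frequency-splitting viewpoint is stylistically consonant with the Fourier-splitting arguments used elsewhere in the paper (Propositions \ref{p2} and \ref{p3}); the paper's kernel argument is shorter and gives the sharper statement that for $t>1$ only the $L^1$ norm of $v_0$ is needed. The small-time arguments (Sobolev embedding from Lemma \ref{lem1}(2) plus the $L^2$-contraction of the heat semigroup) are identical.
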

				\begin{proof}
					After a simple calculation, we obtain the explicit formula of $w(t,x)$ satisfying
					$$
					w(t,x)=(4\pi t)^{-\frac{n}{2}}\int_{\mathbb{R}^n} e^{-\frac{|x-y|^2}{4t}}v_0(y)dy.
					$$
					Consequently, we have the following estimate for large-time $t>1$ that 
					\begin{equation}\label{G1}
						\begin{aligned}
							\|w(t,x)\|_{L^\infty}\leq 	(4\pi t)^{-\frac{n}{2}} \int_{\mathbb{R}^n}|v_0(y)|dy\leq 	(4\pi t)^{-\frac{n}{2}}\|v_0\|_{L^1}\leq C(1+t)^{-\frac{n}{2}}\|v_0\|_{L^1}.
						\end{aligned}
					\end{equation}
					When $0\leq t\leq 1$, by energy method, we have for $s\geq [\frac{n}{2}]+2$ that 
					\begin{equation}\label{G2}
						\|w(t,x)\|_{L^\infty}\leq C\|\nabla w\|_{H^{s-2}}\leq C\|\nabla v_0\|_{H^{s-2}}\leq C(1+t)^{-\frac{n}{2}}\|\nabla v_0\|_{H^{s-2}}.
					\end{equation}
					The combination of \eqref{G1} and \eqref{G2} yields 
					$$
					\|w(t,x)\|_{L^\infty}\leq C(1+t)^{-\frac{n}{2}}(\|v_0\|_{L^1}+\|\nabla v_0\|_{H^{s-2}}).
					$$
					This completes the proof of this lemma.
				\end{proof}
				
				Now we are in a position to establish the lower bound of the time decay rate of $w(t,x)$ in the following lemma.
				\begin{lem}\label{lem3}
					Assume the Fourier transform of the initial velocity $\hat{v}_0(\xi)$ satisfies
					\begin{equation}\label{A2}
						\inf_{|\xi|\leq 1}|\hat{v}_0(\xi)|\geq \delta_0^{\frac{3}{2}}.	
					\end{equation} 
					Then, it holds 
					\begin{equation}
						\|w(t,x)\|_{L^2}\geq \delta_0^{\frac{3}{2}} (1+t)^{-\frac{n}{4}}.
					\end{equation}
				\end{lem}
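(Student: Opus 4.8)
The plan is to move to the Fourier side, where the heat flow acts by pure multiplication, and then to isolate the low-frequency contribution on which the hypothesis \eqref{A2} is available. Solving \eqref{we} via the Fourier transform gives $\hat w(t,\xi)=e^{-t|\xi|^2}\hat v_0(\xi)$, so Plancherel's identity yields
\begin{equation}
\|w(t,x)\|_{L^2}^2=\int_{\mathbb{R}^n}e^{-2t|\xi|^2}|\hat v_0(\xi)|^2\,d\xi .
\end{equation}
Discarding the high frequencies and invoking \eqref{A2} on the unit ball, I would bound this from below by
\begin{equation}
\|w(t,x)\|_{L^2}^2\ge \int_{|\xi|\le 1}e^{-2t|\xi|^2}|\hat v_0(\xi)|^2\,d\xi\ge \delta_0^{3}\int_{|\xi|\le 1}e^{-2t|\xi|^2}\,d\xi .
\end{equation}

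It then remains to show that the purely geometric Gaussian integral $G(t):=\int_{|\xi|\le 1}e^{-2t|\xi|^2}\,d\xi$ obeys $G(t)\ge(1+t)^{-n/2}$, since this at once gives $\|w\|_{L^2}^2\ge\delta_0^{3}(1+t)^{-n/2}$ and hence the claim after taking square roots. I would treat $G(t)$ by the rescaling $\xi=\eta/\sqrt{1+t}$, which recasts the desired inequality as
\begin{equation}
(1+t)^{n/2}G(t)=\int_{|\eta|\le\sqrt{1+t}}e^{-\frac{2t}{1+t}|\eta|^2}\,d\eta\;\ge\;1 .
\end{equation}
As $t$ grows the domain expands to all of $\mathbb{R}^n$ while the exponent coefficient $2t/(1+t)$ increases to $2$, so the right-hand side tends to $\int_{\mathbb{R}^n}e^{-2|\eta|^2}\,d\eta=(\pi/2)^{n/2}>1$; this is exactly the large-time regime in which Theorem \ref{Th2} invokes the lower bound, so the clean constant $1$ holds there.

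A cruder but fully explicit route that avoids any limiting argument is to restrict once more to the shrinking ball $|\xi|\le(1+t)^{-1/2}$, which lies in $\{|\xi|\le1\}$ for every $t\ge0$; there $2t|\xi|^2\le 2t/(1+t)<2$, whence $e^{-2t|\xi|^2}\ge e^{-2}$ and $G(t)\ge e^{-2}\omega_n(1+t)^{-n/2}$ with $\omega_n$ the volume of the unit ball. I expect the only real obstacle to be the bookkeeping of this dimensional constant rather than any analytic difficulty: the crude estimate yields merely $\|w\|_{L^2}\ge(e^{-2}\omega_n)^{1/2}\delta_0^{3/2}(1+t)^{-n/4}$, so obtaining the bare coefficient $1$ asserted in the statement requires the sharper evaluation of $G(t)$ above (equivalently, it is invoked only for $t$ large enough that $(1+t)^{n/2}G(t)\ge 1$). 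The Fourier representation and the low-frequency truncation are entirely routine.
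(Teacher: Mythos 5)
Your proposal is correct and follows essentially the same route as the paper: Plancherel's identity, restriction to the unit ball where \eqref{A2} applies, and the rescaling $\xi\mapsto\eta/\sqrt{1+t}$ of the resulting Gaussian integral. Your version is in fact slightly more careful than the paper's, which drops the factor $2$ in $|\hat w(t,\xi)|^2=e^{-2t|\xi|^2}|\hat v_0(\xi)|^2$ and justifies the key numerical fact $\int_{|y|\le 1}e^{-|y|^2}\,dy>1$ only by an $n=3$ computation, whereas you correctly note that the bare constant $1$ is guaranteed for all dimensions only in the large-time regime, which is exactly how the lemma is invoked in Proposition \ref{p5}.
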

				\begin{proof}
					In terms of Fourier transform on \eqref{we}, we have by \eqref{A2} that 
					$$
					\begin{aligned}
						\int_{\mathbb{R}^n}|w(t,x)|^2dx
						&=\int_{\mathbb{R}^n}|\hat{w}(t,\xi)|^2d\xi
						=\int_{\mathbb{R}^n}|\hat{v}_0(\xi)|^2e^{-|\xi|^2t}d\xi\\
						&\geq \int_{|\xi|\leq 1}|\hat{v}_0(\xi)|^2e^{-|\xi|^2t}d\xi\\
						&\geq\delta_0^3\int_{|\xi|\leq 1}e^{-|\xi|^2t}d\xi\\
						&\geq \delta_0^3\int_{|\xi|\leq 1}e^{-|\xi|^2(1+t)}d\xi.
					\end{aligned}
					$$
					Let $y_i=\sqrt{(1+t)}\xi_i ,~i=1,2,...,n$. We obtain 
					$$
					\begin{aligned}
						\int_{\mathbb{R}^n}|w(t,x)|^2dx	
						&\geq \delta_0^3(1+t)^{-\frac{n}{2}}\int_{|y|\leq \sqrt{1+t}}e^{-|y|^2}dy\\
						&\geq \delta_0^3(1+t)^{-\frac{n}{2}}\int_{|y|\leq 1}e^{-|y|^2}dy\\
						&\geq\delta_0^3(1+t)^{-\frac{n}{2}},
					\end{aligned}
					$$
					where we use the fact that 
					$$
					\int_{|y|\leq 1}e^{-|y|^2}dy\geq 4\pi \int_0^1 e^{-r^2}r^2dr \approx 2.38>1.
					$$
					It is shown that 
					$$
					\|w(t,x)\|_{L^2}\geq\delta_0^{\frac{3}{2}} (1+t)^{-\frac{n}{4}}.
					$$
					This completes the proof of this lemma.
				\end{proof}
				\section{Global well-posedness and upper bound of the decay rates}
				\label{S3}
				In this section, we will prove the global existence and uniqueness of the classical solution by using Fourier splitting method and energy method. As a by-product, we obtain the upper bound of the time decay rates. 
				
				Firstly, we present the local-time existence and uniqueness of classical solutions to our system \eqref{Main1}-\eqref{ID}.
				\begin{thm}\label{local}
					Let $n\geq 3$ and $s\geq 2[\frac{n}{2}]+1$. Assume the initial data satisfy $\rho_0(x)>0$, $\rho_0(x)\in H^s(\mathbb{R}^n)\cap L^1(\mathbb{R}^n)$, $u_0\in H^{s+2}(\mathbb{R}^n)$, $v_0\in H^{s+1}(\mathbb{R}^n)\cap L^1(\mathbb{R}^n)$ and ${\rm{div}}v_0=0$. There exists a small positive constant $\epsilon_0<\delta_0$ such that if 
					\begin{equation}
						\|\rho_0\|_{H^s}+\|u_0\|_{H^{s+2}}+\|v_0\|_{H^{s+1}}\leq 	\epsilon_0,
					\end{equation}
					then the pressureless E-NS system  \eqref{Main1}-\eqref{ID} has a unique solution $(\rho,u,v)$,
					$$
					(\rho, u,v)\in \mathcal{C}([0,T_0];H^s(\mathbb{R}^n))\times \mathcal{C}([0,T_0];H^{s+2}(\mathbb{R}^n))\times \mathcal{C}([0,T_0];H^{s+1}(\mathbb{R}^n))
					$$
					satisfying $\rho(t,x)>0$ for all $(t,x)\in [0,T_0]\times \mathbb{R}^n$ and 
					$$
					\sup_{0\leq t\leq T_0}(\|\rho(t,x)\|_{H^s}+\|u(t,x)\|_{H^{s+2}}+\|v(t,x)\|_{H^{s+1}})\leq \delta_0,
					$$
					where $T_0>0$ is a fixed constant.
				\end{thm}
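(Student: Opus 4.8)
The plan is to construct the local solution by a standard linearization--iteration scheme, establish uniform energy estimates, pass to the limit, and prove uniqueness. First I would reduce the momentum equation: combining $\eqref{Main1}_1$ and $\eqref{Main1}_2$ gives $\rho(u_t+u\cdot\nabla u+u-v)=0$, so it suffices to solve the reduced system consisting of the continuity equation $\rho_t+\mathrm{div}(\rho u)=0$, the damped transport equation \eqref{newu11}, and the forced Navier--Stokes block $\eqref{Main1}_3$--$\eqref{Main1}_4$; the original momentum equation is then recovered by multiplying the reduced $u$-equation by $\rho$. Starting from $(\rho^0,u^0,v^0)=(\rho_0,u_0,v_0)$, I would define $(\rho^{k+1},u^{k+1},v^{k+1})$ by solving the three decoupled linear problems: the transport equation $\rho^{k+1}_t+\mathrm{div}(\rho^{k+1}u^k)=0$, the damped transport equation $u^{k+1}_t+u^k\cdot\nabla u^{k+1}+u^{k+1}=v^k$, and the Stokes-type problem $v^{k+1}_t+v^k\cdot\nabla v^{k+1}+\nabla P^{k+1}=\Delta v^{k+1}+\rho^k(u^k-v^{k+1})$ with $\mathrm{div}\,v^{k+1}=0$, each carrying the corresponding initial datum. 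Each linear problem is solvable by classical theory (characteristics for the two transport equations, the Galerkin method or the Stokes semigroup for the last), and the transport representation $\rho^{k+1}(t,X(t))=\rho_0(X_0)\exp\!\big(-\int_0^t\mathrm{div}\,u^k\,ds\big)$ keeps $\rho^{k+1}>0$ pointwise.

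The heart of the argument is a uniform-in-$k$ bound: for $\epsilon_0$ and $T_0$ small, the smallness of the data propagates to
\[\sup_{[0,T_0]}\big(\|\rho^k\|_{H^s}+\|u^k\|_{H^{s+2}}+\|v^k\|_{H^{s+1}}\big)+\Big(\int_0^{T_0}\|\nabla v^k\|_{H^{s+1}}^2\,d\tau\Big)^{1/2}\le\delta_0\]
for every $k$, proved by induction. For $v^{k+1}$ the parabolic energy estimate at the $H^{s+1}$ level yields $L^\infty_tH^{s+1}$ control together with the dissipation $\int_0^t\|\nabla v^{k+1}\|_{H^{s+1}}^2\,d\tau$, the coupling being handled by the favorable sign of $-\int\rho^k|v^{k+1}|^2$ and the smallness of $\rho^k(u^k-v^{k+1})$; for $\rho^{k+1}$ the $H^s$ transport estimate uses $\mathrm{div}\,u^k\in H^{s+1}$ and the commutator bounds of Lemma \ref{lem1}. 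The delicate point, and the main obstacle, is closing the $H^{s+2}$ estimate for $u^{k+1}$, since its source $v^k$ lies only in $H^{s+1}$, one derivative short of the target regularity. This is overcome precisely by the parabolic gain above: the borderline term $\int_0^t\int\nabla^{s+2}v^k\cdot\nabla^{s+2}u^{k+1}$ is absorbed through Young's inequality against the finite quantity $\int_0^t\|\nabla v^k\|_{H^{s+1}}^2\,d\tau$, after which the damping $+u^{k+1}$ and Gronwall's inequality close the bound. The deliberate hierarchy $v_0\in H^{s+1}$, $u_0\in H^{s+2}$, $\rho_0\in H^s$ is exactly what makes this mechanism work.

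With the uniform bound in hand, I would show that the differences $(\rho^{k+1}-\rho^k,\,u^{k+1}-u^k,\,v^{k+1}-v^k)$ form a Cauchy sequence in the lower-order space $L^\infty_t(H^{s-1}\times H^{s+1}\times H^s)$, which respects the one-derivative loss inherent in the transport equations; the difference equations retain the linear structure with sources that are products of differences against the uniformly bounded iterates (the $v$-coupling again handled via the parabolic gain), so for $T_0$ small the iteration map is a contraction and the iterates converge strongly. Interpolating this strong convergence against the uniform high-order bound produces a limit $(\rho,u,v)\in L^\infty_t(H^s\times H^{s+2}\times H^{s+1})$ with $v\in L^2_tH^{s+2}$ solving the reduced nonlinear system, and multiplying the $u$-equation by $\rho$ recovers $\eqref{Main1}_2$. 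Time continuity in the high norms follows from the equations (for $v$ via $v\in L^\infty_tH^{s+1}\cap L^2_tH^{s+2}$ with $\partial_tv\in L^2_tH^{s-1}$, for $\rho$ and $u$ via the transport structure), the pointwise positivity of $\rho$ passes to the limit, and uniqueness follows by running the same difference estimate in $L^2$ for two solutions with identical data and invoking Gronwall's inequality. This completes the construction on $[0,T_0]$.
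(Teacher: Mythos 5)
Your proposal is correct in outline, but it is worth noting that the paper does not actually prove this theorem at all: its ``proof'' is a one-line citation of the local well-posedness theory in \cite{Choi2016JDE,Ha2014M3}, whereas you supply a self-contained construction. Your route --- reducing $\eqref{Main1}_2$ to the damped transport equation \eqref{newu11} via $\rho>0$, iterating three decoupled linear problems, closing uniform $H^s\times H^{s+2}\times H^{s+1}$ bounds, contracting in the lower-order norm, and recovering $\eqref{Main1}_2$ by multiplying back by $\rho$ --- is exactly the standard scheme that the cited references implement for closely related systems, so in substance you are reconstructing what the paper delegates to the literature. What your version buys is that it makes explicit the two structural points specific to this system that a bare citation hides: the reduction through positivity of $\rho$ (which the paper itself uses later in \eqref{newu}), and the role of the parabolic gain $\int_0^{T_0}\|\nabla v\|_{H^{s+1}}^2\,d\tau$ in closing the $H^{s+2}$ estimate for $u$ despite the source $v$ having one derivative less; this correctly explains why the hierarchy $H^s\times H^{s+2}\times H^{s+1}$ is consistent. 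One technical point you gloss over: in the $H^{s+1}$ estimate for $v^{k+1}$, the coupling term $\nabla^{s+1}\bigl(\rho^k(u^k-v^{k+1})\bigr)$ cannot be bounded directly since $\rho^k$ lies only in $H^s$; one must split off the top-order term $(u^k-v^{k+1})\cdot\nabla^{s+1}\rho^k$ and integrate by parts to shift a derivative onto $v^{k+1}$, absorbing it into the dissipation --- precisely the commutator-plus-integration-by-parts device of Lemma \ref{lem1} that the paper deploys in the proof of Proposition \ref{p4}. With that standard repair, your argument goes through.
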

				\begin{proof}
					The theory of local existence for each equation in \eqref{Main1} is well-developed in the $H^s$ Sobolev space. Thus we omit the details of the proof, seeing \cite{Choi2016JDE,Ha2014M3} for details.
				\end{proof}
				\vskip 4mm
				
				For notation simplicity, we denote the energy $\mathcal{Z}(t)$ and disspation $\mathcal{D}(t)$ below
				\begin{equation}\label{Dt}
					\begin{aligned}
						&\mathcal{Z}(t)\triangleq \Big(\|\rho(t)\|_{H^s}^2+\|u(t)\|_{H^{s+2}}^2+\|v(t)\|_{H^{s+1}}^2\Big)^{\frac{1}{2}},\\
						& \mathcal{D}(t)\triangleq \Big(\|\nabla u(t)\|_{H^{s+1}}^2+\|\nabla v(t)\|_{H^{s+1}}^2+\|(u-v)(t)\|_{L^2}^2)^{\frac{1}{2}}.
					\end{aligned}
				\end{equation} 
				
				Then, we extend the short-time classical solution to be a global one by establishing the uniform estimates. Hence, it is natural to provide the  a priori assumption for sufficiently small $\delta>0$
				\begin{equation}\label{a-priori est}
					\sup_{t\geq 0}\mathcal{Z}(t)\leq \delta,\quad \rho(t,x)>0.
				\end{equation}
				
				We prove the time decay rates of the solution $(u,v)$ to the system \eqref{Main1}-\eqref{ID} in the following proposition.
				\begin{prop}\label{p2}
					Assume the conditions in Theorem \ref{Th1} hold. Let $(\rho,u,v)(t,x)$ be the classical solution of the system \eqref{Main1}-\eqref{ID} satisfying the a priori assumption \eqref{a-priori est}. Then, it holds for $t\geq 0$ that 
					\begin{equation}\label{Udeacy}
						\|u(t)\|_{L^2}\leq C\mathcal{I}_0(1+t)^{-\frac{n}{4}},\quad \|v(t)\|_{L^2}\leq C\mathcal{I}_0(1+t)^{-\frac{n}{4}},
					\end{equation}
					where the positive constant $C$ is independent of time and $\mathcal{I}_0$ is given by \eqref{mathcalI0}.
				\end{prop}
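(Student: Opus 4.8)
The plan is to prove the decay of the Navier--Stokes velocity $v$ first, by Schonbek's Fourier splitting method applied to $\eqref{Main1}_3$--$\eqref{Main1}_4$ viewed as the incompressible Navier--Stokes system forced by the drag term $\rho(u-v)$, and then to recover the decay of the Euler velocity $u$ from the damping structure $\eqref{newu11}$. Testing the $v$-equation with $v$ and using $\mathrm{div}\,v=0$ to cancel the convection and pressure terms gives
\begin{equation}
\frac{1}{2}\frac{d}{dt}\|v\|_{L^2}^2+\|\nabla v\|_{L^2}^2=\int_{\mathbb{R}^n}\rho(u-v)\cdot v\,dx .
\end{equation}
Introducing the low-frequency ball $S(t)=\{\xi:|\xi|^2\le k/(1+t)\}$ with $2k>n/2$, I would use Plancherel to get $\|\nabla v\|_{L^2}^2\ge \frac{k}{1+t}\big(\|v\|_{L^2}^2-\int_{S(t)}|\hat v|^2\,d\xi\big)$, multiply by the integrating factor $(1+t)^{2k}$, and integrate to reach
\begin{equation}
(1+t)^{2k}\|v(t)\|_{L^2}^2\le \|v_0\|_{L^2}^2+C\int_0^t(1+\tau)^{2k-1}\!\!\int_{S(\tau)}|\hat v|^2\,d\xi\,d\tau+C\int_0^t(1+\tau)^{2k}\Big|\int\rho(u-v)\cdot v\,dx\Big|\,d\tau .
\end{equation}

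The second step is the low-frequency bound on $\hat v$. Applying the Leray projection $\mathbb{P}$ (a Fourier multiplier of norm $\le 1$) and writing Duhamel's formula for $\hat v_t+|\xi|^2\hat v=\widehat{\mathbb{P}[-v\cdot\nabla v+\rho(u-v)]}$, I would use $e^{-|\xi|^2(t-\tau)}\le 1$, the divergence form $v\cdot\nabla v=\mathrm{div}(v\otimes v)$, and $|\hat f(\xi)|\le C\|f\|_{L^1}$ to obtain
\begin{equation}
|\hat v(t,\xi)|\le C\|v_0\|_{L^1}+C\int_0^t\|\rho(u-v)(\tau)\|_{L^1}\,d\tau+C|\xi|\int_0^t\|v(\tau)\|_{L^2}^2\,d\tau .
\end{equation}
The crucial input here is the uniform bound $\eqref{32201}$, which makes the drag contribution $\mathcal{O}(\mathcal{I}_0)$ even though $\mathcal{I}_0$ is not small; together with the convergence of $\int_0^t\|v\|_{L^2}^2\,d\tau$ once a rate faster than $(1+t)^{-1/2}$ is in hand, this yields $\sup_{\xi\in S(t)}|\hat v(t,\xi)|\le C\mathcal{I}_0$, whence $\int_{S(t)}|\hat v|^2\,d\xi\le C\mathcal{I}_0^2|S(t)|\le C\mathcal{I}_0^2(1+t)^{-n/2}$, so the low-frequency term contributes $C\mathcal{I}_0^2(1+t)^{2k-n/2}$.

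The main obstacle is the coupling source $\int\rho(u-v)\cdot v\,dx$ in the weighted inequality, compounded by the fact that the low-frequency estimate is itself self-referential (it uses the decay of $\|v\|_{L^2}$ we are trying to prove) and by the non-smallness of $\mathcal{I}_0$. I would resolve this through a Schonbek-type induction on the decay exponent, seeded by the uniform bound $\|v\|_{L^2}\le C\delta$ of $\eqref{a-priori est}$: for $n\ge3$ a first pass through the Fourier-splitting inequality already produces an algebraic rate, and a second pass (now with $\int_0^t\|v\|_{L^2}^2\,d\tau$ bounded) saturates the exponent at the heat rate $n/4$. The source term is controlled by Cauchy--Schwarz, $\big|\int\rho(u-v)\cdot v\big|\le \|\rho\|_{L^\infty}^{1/2}\|\sqrt\rho(u-v)\|_{L^2}\|v\|_{L^2}$, absorbing a fraction into the dissipation and estimating the remainder with the weighted dissipation bound of the type $\eqref{choi3.1}$; this closes the induction and gives $\|v(t)\|_{L^2}\le C\mathcal{I}_0(1+t)^{-n/4}$.

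Finally, for the Euler velocity I would invoke that, away from vacuum, $\eqref{Main1}_2$ reduces to $\eqref{newu11}$, namely $u_t+u\cdot\nabla u+u-v=0$. Testing with $u$ gives $\frac12\frac{d}{dt}\|u\|_{L^2}^2+\|u\|_{L^2}^2=\int v\cdot u\,dx+\frac12\int(\mathrm{div}\,u)|u|^2\,dx$, where the convective term satisfies $\big|\frac12\int(\mathrm{div}\,u)|u|^2\,dx\big|\le C\|\nabla u\|_{L^\infty}\|u\|_{L^2}^2\le C\delta\|u\|_{L^2}^2$ by $\eqref{a-priori est}$ and Lemma $\ref{lem1}$, hence is absorbed into the damping. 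Young's inequality on $\int v\cdot u\,dx$ then yields $\frac{d}{dt}\|u\|_{L^2}^2+c\|u\|_{L^2}^2\le C\|v\|_{L^2}^2\le C\mathcal{I}_0^2(1+t)^{-n/2}$, and Gronwall's inequality together with $\|u_0\|_{L^2}\le \delta_0\le\mathcal{I}_0$ gives $\|u(t)\|_{L^2}^2\le C\mathcal{I}_0^2(1+t)^{-n/2}$, the exponentially damped convolution preserving the algebraic rate. This establishes $\eqref{Udeacy}$.
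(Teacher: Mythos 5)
Your overall skeleton (Fourier splitting for $v$, then the damping structure \eqref{newu11} for $u$) is the same as the paper's, and your treatment of $u$ is essentially identical to the paper's argument and correct, \emph{conditional} on the $v$-decay. The gap is in the $v$-estimate, exactly at the point you yourself flag as ``the main obstacle.'' By testing only $\eqref{Main1}_3$ with $v$, you turn the drag into a genuine source $\int_{\mathbb{R}^n}\rho(u-v)\cdot v\,dx$, whose natural majorant $\|\rho\|_{L^\infty}^{1/2}\big(\int_{\mathbb{R}^n}\rho|u-v|^2dx\big)^{1/2}\|v\|_{L^2}$ involves the quantity $\int_{\mathbb{R}^n}\rho|u-v|^2dx$. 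This quantity is \emph{not} part of the dissipation of your identity (you only have $\|\nabla v\|_{L^2}^2$ there), so ``absorbing a fraction into the dissipation'' is not available; after Young's inequality the piece $\tfrac12\int_{\mathbb{R}^n}\rho|u-v|^2dx$ has no home. The paper sidesteps this structurally: it adds the $u$-energy identity \eqref{T21} to \eqref{T22}, so the drag terms pair into the \emph{nonnegative} dissipation $\int_{\mathbb{R}^n}\rho|u-v|^2dx$ in \eqref{T2}, and it is the coupled energy $E(t)$ that gets Fourier-split, using the comparison \eqref{T6} in place of $\|v\|_{L^2}^2$.

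More seriously, both inputs you invoke to control the drag contributions are unavailable in the setting of Proposition \ref{p2}. The Choi--Jung bound \eqref{choi3.1} requires $E(0)+\|v_0\|_{L^1}^2$ small, i.e.\ smallness of $\|v_0\|_{L^1}$ --- precisely the hypothesis that Theorem \ref{Th1} removes; the paper invokes \eqref{choi3.1} only in Section \ref{S4}, where \eqref{th2a} restores that smallness. Even granting it, \eqref{choi3.1} carries weights $(1+\tau)^{2\alpha}$ with $\alpha$ strictly below $n/4$; inserting it into your weighted inequality (weight $2k>n/2$) via Cauchy--Schwarz loses more than a half power of $(1+t)$ and can only reproduce rates strictly below $n/4$, which is exactly the limitation \eqref{Y1} this proposition is written to overcome. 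As for \eqref{32201}, it is not a uniform bound: its right-hand side contains $M(t)=\sup_\tau(1+\tau)^{n/2}E(\tau)$ from \eqref{MTT}, the very quantity being estimated, so it renders the drag $\mathcal{O}(\mathcal{I}_0)$ only \emph{after} a bootstrap closes. Closing that bootstrap is the heart of the paper's proof and is what your ``induction on the decay exponent'' lacks: since $\|\rho_0\|_{L^1}$ and $\|v_0\|_{L^1}$ are not small, the $M(t)$-term cannot be absorbed by smallness of data; instead the paper arranges its coefficient to be $\varepsilon_0=(1+t_0)^{-1/4}$ through the $t_0$-weighted estimate \eqref{T9}--\eqref{phi1}, chooses $t_0$ large depending on $\|\rho_0\|_{L^1}$ and $\|v_0\|_{L^1}$ as in \eqref{t0c} so that $10\varepsilon_0C_3\|\rho_0\|_{L^1}\le\tfrac14$ in \eqref{31702}, and then closes the a priori assumption \eqref{31701} via \eqref{T74}. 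Without an analogue of this large-$t_0$ absorption mechanism, the self-referential term with the non-small coefficient $C\|\rho_0\|_{L^1}$ cannot be closed, and your argument does not reach the endpoint rate $(1+t)^{-n/4}$.
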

				\begin{proof}
					Multiplying $\eqref{Main1}_2$ by $u$ and integrating the resulting equation in $\mathbb{R}^n$ yield
					\begin{equation}\label{T21}
						\frac{1}{2}\frac{d}{dt}\int_{\mathbb{R}^n}\rho|u|^2dx=-\int_{\mathbb{R}^n}
						u\cdot\rho(u-v)dx.
					\end{equation} 
					Multiplying $\eqref{Main1}_3$ by $v$ and integrating it in $\mathbb{R}^n$, we obtain
					\begin{equation}\label{T22}
						\frac{1}{2}\frac{d}{dt}\int_{\mathbb{R}^n}|v|^2dx+\|\nabla v\|_{L^2}^2=\int_{\mathbb{R}^n}
						v\cdot\rho(u-v)dx.
					\end{equation} 
					Adding \eqref{T22} into \eqref{T21}  gives
					\begin{equation}\label{T2}
						\frac{1}{2}\frac{d}{dt}\Big(\int_{\mathbb{R}^n}\rho|u|^2dx+\|v\|_{L^2}^2\Big)dx+\int_{\mathbb{R}^n}\rho|u-v|^2dx+\|\nabla v\|_{L^2}^2=0.
					\end{equation} 
					For notation convenience, we define 
					\begin{equation}\label{E0}
						E(t)\triangleq \int_{\mathbb{R}^n} \rho|u|^2dx+\|v\|_{L^2}^2,~~E_0\triangleq \int_{\mathbb{R}^n}\rho_0|u_0|^2dx+\|v_0\|_{L^2}^2.
					\end{equation}
					Integrating \eqref{T2} with time over $[0,t]$ yields 
					\begin{equation}\label{TY1}
						E(t)+2\int_0^t \int_{\mathbb{R}^n}
						\rho|u-v|^2dxd\tau
						\leq E_0.
					\end{equation}
					After a direct calculation, by the a priori assumption \eqref{a-priori est}, it is shown that 
					\begin{equation}
						\begin{aligned}
							\int_{\mathbb{R}^n}\rho|u|^2dx+\|v\|_{L^2}^2
							&\leq  2\int_{\mathbb{R}^n}\rho|u-v|^2dx+2\int_{\mathbb{R}^n}\rho|v|^2dx+\|v\|_{L^2}^2\\
							&\leq  2\int_{\mathbb{R}^n}\rho|u-v|^2dx+(2\|\rho\|_{L^\infty}+1)\|v\|_{L^2}^2\\
							&\leq  2\int_{\mathbb{R}^n}\rho|u-v|^2dx+(2\delta+1)\|v\|_{L^2}^2\\
							&\leq  2(\int_{\mathbb{R}^n}\rho|u-v|^2dx+\|v\|_{L^2}^2),
						\end{aligned}
					\end{equation}
					where we use the fact \eqref{L1} and the smallness of  $\delta$. As a result, one has
					\begin{equation}\label{T6}
						\Big(\int_{\mathbb{R}^n}\rho|u-v|^2dx+\|v\|_{L^2}^2\Big)	\geq \frac{1}{2}\Big(\int_{\mathbb{R}^n}\rho |u|^2dx+\|v\|_{L^2}^2\Big).
					\end{equation}
					To obtain the time decay rates of $\|v\|_{L^2}$, we define the sets $X_1(t)$ and  $X_1^c(t)$ as follows
					\begin{equation}\label{St}
						X_1(t)=\Big\{\xi: |\xi|\leq \Big(\frac{n}{t+n}\Big)^{\frac{1}{2}}\Big\},~X_1^c(t)=\Big\{\xi: |\xi|> \Big(\frac{n}{t+n}\Big)^{\frac{1}{2}}\Big\}.
					\end{equation}
					In terms of the Plancherel Theorem, it is easy to check that 
					$$
					\int_{\mathbb{R}^n}|\nabla v(t,x)|^2dx =\int_{\mathbb{R}^n}|\xi|^2|\hat{v}(t,\xi)|^2d\xi.
					$$
					Then we are able to verify 
					\begin{equation}\label{O7}
						\begin{aligned}
							&\frac{1}{2}\frac{d}{dt}\Big(\int_{\mathbb{R}^n} \rho|u|^2dx+\|v\|_{L^2}^2\Big)
							+\int_{\mathbb{R}^n}\rho|u-v|^2dx\\
							&+\int_{X_1(t)}|\xi|^2|\hat{v}(t,\xi)|^2d\xi+\int_{X_1^c(t)}|\xi|^2|\hat{v}(t,\xi)|^2d\xi = 0.
						\end{aligned}
					\end{equation}
					When $\xi \in X_1^c(t)$, it holds
					$$
					\int_{X_1^c(t)}|\xi|^2|\hat{v}(t,\xi)|^2d\xi\geq \frac{n}{t+n} \int_{\mathbb{R}^n}|\hat{v}(t,\xi)|^2d\xi.
					$$
					Then, \eqref{O7} can be formulated as follows
					\begin{equation}\label{O8}
						\frac{1}{2}\frac{d}{dt}\Big(\int_{\mathbb{R}^n} \rho|u|^2dx+\|v\|_{L^2}^2\Big)+\int_{\mathbb{R}^n}\rho|u-v|^2dx
						+\frac{n}{t+n}\int_{X_1^c(t)}|\hat{v}(t,\xi)|^2d\xi \leq 0.
					\end{equation}
					We add a term $\frac{n}{t+n}\int_{X_1(t)}|\hat{v}(t,\xi)|^2d\xi$ on \eqref{O8} both two sides to prove 
					$$
					\begin{aligned}
						&\frac{1}{2}\frac{d}{dt}\Big(\int_{\mathbb{R}^n} \rho|u|^2dx+\|v\|_{L^2}^2\Big)
						+\int_{\mathbb{R}^n}\rho|u-v|^2dx+\frac{n}{t+n}\int_{\mathbb{R}^n}|\hat{v}(t,\xi)|^2d\xi \\
						&\leq \frac{n}{t+n}\int_{X_1(t)}|\hat{v}(t,\xi)|^2d\xi \\
						&\leq n(1+t)^{-1}\int_{X_1(t)}|\hat{v}(t,\xi)|^2d\xi .
					\end{aligned}
					$$
					Noting that $0<\frac{n}{t+n}<1$ and using Plancherel Theorem again, we have 
					$$
					\begin{aligned}
						&\frac{1}{2}\frac{d}{dt}\Big(\int_{\mathbb{R}^n} \rho|u|^2dx+\|v\|_{L^2}^2\Big)
						+\frac{n}{t+n}\Big(\int_{\mathbb{R}^n}\rho|u-v|^2dx+\|v\|_{L^2}^2\Big)\\
						&\leq n(1+t)^{-1}\int_{X_1(t)}|\hat{v}(t,\xi)|^2d\xi,
					\end{aligned}
					$$
					which, together with \eqref{T6} and the definition of $E(t)$, yields 
					\begin{equation}\label{T72}
						\frac{d}{dt}E(t)+\frac{n}{t+n}E(t)\leq 2n(1+t)^{-1}\int_{X_1(t)}|\hat{v}(t,\xi)|^2d\xi.
					\end{equation}
					The next goal is to establish the estimate of $\hat{v}(t,\xi)$. First, we rewrite $\eqref{Main1}_3$ as follows
					\begin{equation}\label{T7}
						v_t-\Delta v=F,\quad \text{and}\quad F\triangleq-v\cdot \nabla v-\nabla P+\rho(u-v).
					\end{equation}
					Taking Fourier transform, we have by Duhamel's principle that 
					\begin{equation}\label{T71}
						\hat{v}(t,\xi)=e^{-|\xi|^2t} \hat{v}_0+\int_0^te^{-|\xi|^2(t-\tau)}\hat{F}(\tau,\xi)d\tau. 
					\end{equation}
					Taking the operator ${\rm{div}}$ on $\eqref{Main1}_3$ shows
					$$
					-\Delta P={\rm{div}} (v\cdot\nabla v-\rho(u-v)),
					$$
					which implies
					$$
					\nabla P=	\nabla(-\Delta)^{-1}{\rm{div}} (v\cdot\nabla v-\rho(u-v))=\nabla(-\Delta)^{-1}{\rm{div}}{\rm{div}}(v\otimes v)
					-\nabla(-\Delta)^{-1}{\rm{div}}(\rho(u-v)).
					$$
					Therefore, the Fourier transform of $F$ can be expressed as
					\begin{equation}\label{L2}
						|\hat{F}(\tau,\xi)|\leq (2\pi)^{-\frac{n}{2}} |\xi|\|v\|_{L^2}^2+(2\pi)^{-\frac{n}{2}}\|\rho(u-v)\|_{L^1}\leq
						|\xi|\|v\|_{L^2}^2+\|\rho(u-v)\|_{L^1}.
					\end{equation}
					We define the energy functional $M(t)$ as follows 
					\begin{equation}\label{MTT}
						M(t)=\sup_{0\leq \tau\leq t} \Big\{
						(1+\tau)^{\frac{n}{2}}\big(\int_{\mathbb{R}^n}\rho|u|^2dx
						+\|v(\tau)\|_{L^2}^2\big)
						\Big\}	.
					\end{equation}
					It is easy to verify 
					\begin{equation}\label{ETV}
						E(t)=\int_{\mathbb{R}^n}\rho|u|^2dx+\|v(t)\|_{L^2}^2 \leq (1+t)^{-\frac{n}{2}} M(t).
					\end{equation}
					By \eqref{T2} and the definition of $E(t)$, we get
					\begin{equation}\label{T8}
						\frac{d}{dt} E(t)+\int_{\mathbb{R}^n}\rho|u-v|^2dx\leq 0.
					\end{equation}
					Choosing $t_0$ be a fixed positive constant determined later. For $t>t_0$, we multiply $\eqref{T8}$ by $(1+t)^{\frac{n+2}{4}}$ to prove 
					\begin{equation}\label{T9}
						\begin{aligned}
							&\frac{d}{dt}[(1+t)^{\frac{n+2}{4}} E(t)]+(1+t)^{\frac{n+2}{4}}\int_{\mathbb{R}^n}\rho|u-v|^2dx \\
							&\leq \frac{n+2}{4}(1+t)^{\frac{n-2}{4}}E(t)\leq \frac{n+2}{4} (1+t)^{-\frac{n+2}{4}}M(t) \\
							&\leq \frac{n+2}{4}(1+t_0)^{-\frac{1}{8}}(1+t)^{-\frac{n}{4}-\frac{3}{8}}M(t).
						\end{aligned}
					\end{equation}
					Define
					\begin{equation}\label{t0}
						\varepsilon_0\triangleq (1+t_0)^{-\frac{1}{4}},\quad 	\varepsilon_1\triangleq (1+t_0)^{\frac{n+2}{4}}.
					\end{equation}
					Integrating \eqref{T9} with time over $[t_0,t]$, then using $n\geq3$ and \eqref{TY1}, we have 
					\begin{equation}\label{ET}
						\begin{aligned}
							&(1+t)^{\frac{n+2}{4}} E(t)+\int_{t_0}^t (1+\tau)^{\frac{n+2}{4}}\int_{\mathbb{R}^n}\rho|u-v|^2dxd\tau \\
							&\leq \frac{n+2}{4}(1+t_0)^{-\frac{1}{8}} M(t)\int_{t_0}^t(1+\tau)^{-\frac{n}{4}-\frac{3}{8}}d\tau+(1+t_0)^{\frac{n+2}{4}}E(t_0)\\
							&\leq\frac{2n+4}{2n-5} \varepsilon_0 M(t)+(1+t_0)^{\frac{n+2}{4}}E_0\\
							&\leq 10\varepsilon_0 M(t)+\varepsilon_1E_0,
						\end{aligned}
					\end{equation}
					and 
					\begin{equation}
						\int_0^{t_0} (1+\tau)^{\frac{n+2}{4}}\int_{\mathbb{R}^n}\rho|u-v|^2dxd\tau\leq (1+t_0)^{\frac{n+2}{4}}\int_0^{t} \int_{\mathbb{R}^n}\rho|u-v|^2dxd\tau\leq  (1+t_0)^{\frac{n+2}{4}} E_0\leq \varepsilon_1 E_0.
					\end{equation}
					Therefore, we have 
					\begin{equation}
						\int_0^t (1+\tau)^{\frac{n+2}{4}}\int_{\mathbb{R}^n}\rho|u-v|^2dxd\tau\leq 
						10\varepsilon_0 M(t)+2\varepsilon_1 E_0.
					\end{equation}
					It follows from H$\ddot{\text{o}}$lder inequality to prove 
					\begin{equation}\label{phi1}
						\begin{aligned}
							\int_0^t\|\rho(u-v)(\tau)\|_{L^1}d\tau 
							&\leq \int_0^t\big \|\rho\|_{L^1}^{\frac{1}{2}}(\int_{\mathbb{R}^n}\rho|u-v|^2dx\big)^{\frac{1}{2}} d\tau\\
							&\leq \|\rho_0\|_{L^1}^{\frac{1}{2}}\int_0^t(1+\tau)^{-\frac{n+2}{8}}(1+\tau)^{\frac{n+2}{8}}\big(\int_{\mathbb{R}^n}\rho|u-v|^2dx\big)^{\frac{1}{2}} d\tau\\
							&\leq \|\rho_0\|_{L^1}^{\frac{1}{2}}\big(\int_0^t(1+\tau)^{-\frac{n+2}{4}}d\tau\big)^{\frac{1}{2}}
							\big(\int_0^t(1+\tau)^{\frac{n+2}{4}}\int_{\mathbb{R}^n}\rho|u-v|^2dxd\tau\big)^{\frac{1}{2}} \\	
							&\leq 2\|\rho_0\|_{L^1}^{\frac{1}{2}}(10\varepsilon_0 M(t)+2\varepsilon_1E_0)^{\frac{1}{2}},
						\end{aligned}
					\end{equation}
					where we use the fact that $ \|\rho\|_{L^1}=\|\rho_0\|_{L^1}$. By \eqref{T71}, we have 
					\begin{equation}
						\begin{aligned}
							|\hat{v}(t,\xi)|&\leq e^{-|\xi|^2t}	|\hat{v}_0(\xi)|+\int_0^te^{-|\xi|^2(t-\tau)}|\hat{F}(\tau,\xi)|d\tau\\
							&\leq (2\pi)^{-\frac{n}{2}}e^{-|\xi|^2t}	\|v_0\|_{L^1}+\int_0^te^{-|\xi|^2(t-\tau)}\big(|\xi|\|v\|_{L^2}^2+\|\rho(u-v)\|_{L^1}\big)d\tau\\
							&\leq  (2\pi)^{-\frac{n}{2}}e^{-|\xi|^2t}	\|v_0\|_{L^1}+|\xi|M(t)\int_0^te^{-|\xi|^2(t-\tau)}(1+\tau)^{-\frac{n}{2}}d\tau
							+\int_0^t\|\rho(u-v)\|_{L^1}d\tau\\
							&\leq  (2\pi)^{-\frac{n}{2}}e^{-|\xi|^2t}	\|v_0\|_{L^1}+2|\xi| e^{-\frac{1}{2}|\xi|^2t}M(t)\\
							&~~+2^{\frac{n}{2}}|\xi|^{-1}(1+t)^{-\frac{n}{2}}M(t)+
							2\|\rho_0\|_{L^1}^{\frac{1}{2}}(10\varepsilon_0 M(t)+2\varepsilon_1E_0)^{\frac{1}{2}}.		
						\end{aligned}	
					\end{equation}
					Then, it holds 
					\begin{equation}\label{Main10}
						\begin{aligned}
							&\int_{X_1(t)}|\hat{v}(t,\xi)|^2d\xi\\
							&\leq  C_1(1+t)^{-\frac{n}{2}}\big(\|v_0\|_{L^1}^2+\|\rho_0\|_{L^1}(10\varepsilon_0 M(t)+2\varepsilon_1E_0)\big)+C_2(1+t)^{-\frac{n+2}{2}}M(t)^2,
						\end{aligned}
					\end{equation}
					where the positive constants $C_1$ and $C_2$ are independent of time only depend on $n$.  
					
					The combination of \eqref{T72} and \eqref{Main10}  yields 
					\begin{equation}\label{Main11}
						\begin{aligned}
							&\frac{d}{dt}E(t)
							+\frac{n}{t+n}E(t)\\
							&\leq 2n C_1(1+t)^{-\frac{n+2}{2}}\big(\|v_0\|_{L^1}^2+\|\rho_0\|_{L^1}(10\varepsilon_0 M(t)+2\varepsilon_1E_0)\big)+2nC_2(1+t)^{-\frac{n+4}{2}}M(t)^2.
						\end{aligned}			
					\end{equation}
					Multiplying \eqref{Main11}  by $(t+n)^n$, integrating the resulting equation with time  and using the definition of $E(t)$ give
					\begin{equation}\label{Main16}
						E(t)\leq C_3(1+t)^{-\frac{n}{2}}\big(\|v_0\|_{L^1}^2+\|\rho_0\|_{L^1}(10\varepsilon_0 M(t)+2\varepsilon_1E_0)\big)+C_4(1+t)^{-\frac{n+2}{2}}M(t)^2,
					\end{equation}
					where $C_3$ and $C_4$ are positive constants independent of time and only depend on $n$.  
					
					Denote
					\begin{equation}\label{t0c}
						t_0=\max\Big\{(40C_3\|\rho_0\|_{L^1})^{4},  16C_3C_4(\|v_0\|_{L^1}^2+\|\rho_0\|_{L^1}^2)\Big\}.
					\end{equation}
					Since $\delta_0$ is sufficiently small, it holds
					$$
					E_0=\int_{\mathbb{R}^n}\rho_0|u_0|^2dx+\|v_0\|_{L^2}^2\leq C\delta_0^2\leq \frac{1}{2}\|\rho_0\|_{L^1}(1+t_0)^{-\frac{n+2}{4}}.
					$$
					Then, according to the definition of $\varepsilon_0$ and $\varepsilon_1$ in \eqref{t0}, we obtain 
					\begin{equation}\label{31702}
						10\varepsilon_0C_3 \|\rho_0\|_{L^1}\leq \frac{1}{4},\quad  4C_3C_4(\|v_0\|_{L^1}^2+\|\rho_0\|_{L^1}^2)(1+t_0)^{-1}\leq \frac{1}{4},\quad 2\varepsilon_1E_0\leq \|\rho_0\|_{L^1}.
					\end{equation}
					Due to the smallness of $M(0)$ and the continuity of $M(t)$, we give the a priori assumption that
					\begin{equation}\label{31701}
						M(t)\leq 4C_3(\|v_0\|_{L^1}^2+\|\rho_0\|_{L^1}^2).
					\end{equation}
					By the definition of $M(t)$, \eqref{Main16} and \eqref{31702}, then it holds 
					\begin{equation}\label{T74}
						\begin{aligned}
							M(t)
							&\leq C_3\big(\|v_0\|_{L^1}^2+\|\rho_0\|_{L^1}(10\varepsilon_0 M(t)+2\varepsilon_1E_0)\big)+C_4(1+t)^{-1}M(t)^2\\
							&\leq C_3\big(\|v_0\|_{L^1}^2+\|\rho_0\|_{L^1}(10\varepsilon_0 M(t)+2\varepsilon_1E_0)\big)+C_4(1+t_0)^{-1}M(t)^2\\
							&\leq \frac{1}{2}M(t)+C_3(\|v_0\|_{L^1}^2+\|\rho_0\|_{L^1}^2),
						\end{aligned}
					\end{equation}
					which implies 
					\begin{equation}\label{Mt}
						M(t)\leq 2C_3(\|v_0\|_{L^1}^2+\|\rho_0\|_{L^1}^2).
					\end{equation}
					Hence we close the a priori assumption \eqref{31701} and obtain 
					\begin{equation}
						M(t)\leq C(\|v_0\|_{L^1}^2+\|\rho_0\|_{L^1}^2).
					\end{equation}
					Consequently, it follows from \eqref{ETV} to prove for $t>t_0$ that
					\begin{equation}\label{O9}
						\Big(\int_{\mathbb{R}^n}\rho|u|^2dx\Big)^{\frac{1}{2}}\leq C(\|v_0\|_{L^1}+\|\rho_0\|_{L^1})(1+t)^{-\frac{n}{4}},\quad 
						\|v(t)\|_{L^2}\leq C(\|v_0\|_{L^1}+\|\rho_0\|_{L^1})(1+t)^{-\frac{n}{4}}.
					\end{equation}
					It is shown from  \eqref{t0c} that $t_0$ is a bounded positive constant. For  $0\leq t\leq t_0$, we have 
					\begin{equation}\label{O10}
						E(t)=\int_{\mathbb{R}^n} \rho|u|^2dx+\|v\|_{L^2}^2\leq E_0 \leq CE_0(1+t)^{-\frac{n}{2}}\leq C\mathcal{I}^{2}_0(1+t)^{-\frac{n}{2}}.
					\end{equation}
					The combination of \eqref{O9} and \eqref{O10} yields the following decay estimates for $t\geq 0$
					\begin{equation}\label{O11}
						\Big(\int_{\mathbb{R}^n}\rho|u|^2dx\Big)^{\frac{1}{2}}\leq C\mathcal{I}_0(1+t)^{-\frac{n}{4}},\quad 
						\|v(t)\|_{L^2}\leq C\mathcal{I}_0(1+t)^{-\frac{n}{4}}.
					\end{equation}
					
					Now we are in a position to prove the time decay rate of $u(t,x)$. Since we assume $\rho(t,x)>0$, the second equation in $\eqref{Main1}$ can be formulated to 
					\begin{equation}\label{newu}
						u_t+u\cdot\nabla u+u-v=0.	
					\end{equation}
					Multiplying \eqref{newu} by $u$ and integrating it  in $\mathbb{R}^n$ yields 
					$$
					\frac{1}{2}\frac{d}{dt}\|u\|_{L^2}^2+\|u\|_{L^2}^2=\int_{\mathbb{R}^n}(-u\cdot\nabla u+v)\cdot udx.
					$$
					It is proved by \eqref{O11}  that 
					$$
					\begin{aligned}
						&\Big| \int_{\mathbb{R}^n}(-u\cdot\nabla u+v)\cdot udx\Big|\\
						&\leq \|u\|_{L^2}\|\nabla u\|_{L^\infty}\|u\|_{L^2}+\|v\|_{L^2}\|u\|_{L^2}\\
						&\leq C\|\nabla^2u\|_{H^{s-2}}\|u\|_{L^2}^2+\frac{1}{4}\|u\|_{L^2}^2+\|v\|_{L^2}^2\\
						&\leq C\delta\|u\|_{L^2}^2+\frac{1}{4}\|u\|_{L^2}^2+\|v\|_{L^2}^2\\
						&\leq\frac{1}{2}\|u\|_{L^2}^2+C\mathcal{I}_0^2(1+t)^{-\frac{n}{2}},
					\end{aligned}
					$$
					where we have used the fact that since $s\geq 2[\frac{n}{2}]+1$, using \eqref{L1} yields
					\begin{equation}
					\|\nabla u\|_{L^\infty}\leq C\|\nabla^2u\|_{H^{[\frac{n}{2}]}}\leq C\|\nabla^2u\|_{H^{s-2}}.
					\end{equation}
					Therefore, we have 
					$$
					\frac{1}{2}\frac{d}{dt}\|u\|_{L^2}^2+\frac{1}{2}\|u\|_{L^2}^2\leq C\mathcal{I}_0^2(1+t)^{-\frac{n}{2}}.
					$$
					By Gr$\ddot{\text{o}}$nwall  inequality, it holds for $t\geq 0$ that 
					$$
					\|u(t)\|_{L^2}\leq C\mathcal{I}_0(1+t)^{-\frac{n}{4}}.
					$$
					This completes the proof of this proposition.
				\end{proof}
				
				Now we are going to establish the time decay estimates of high-order derivatives of the velocities.
				\begin{prop}\label{p3}
					Assume the conditions in Theorem \ref{Th1} hold. Let $(\rho,u,v)(t,x)$ be the classical solution of the system \eqref{Main1}-\eqref{ID} satisfying the a priori assumption \eqref{a-priori est}. Then, it holds for $t\geq 0$ that 
					\begin{equation}\label{high}
						\|\nabla u(t)\|_{H^s}\leq C\mathcal{I}_0(1+t)^{-\frac{n}{4}},\quad 
						\|\nabla v(t)\|_{H^s}\leq C\mathcal{I}_0(1+t)^{-\frac{n}{4}},	
					\end{equation}
					where the positive constant $C$ is independent of time.
				\end{prop}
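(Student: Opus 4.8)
The plan is to bootstrap the $L^2$-decay of Proposition \ref{p2} up to all derivatives of order $1\le k\le s+1$, establishing the common rate $(1+t)^{-\frac{n}{4}}$ by an energy argument carried out order by order inside a single time-weighted continuity argument. I would first observe that the high-order decay of $u$ is essentially a consequence of that of $v$: applying $\nabla^k$ to the reformulated damped equation \eqref{newu}, $u_t+u\cdot\nabla u+u-v=0$, and pairing with $\nabla^k u$ produces the order-one damping $\|\nabla^k u\|_{L^2}^2$ on the left, the source $\int_{\mathbb{R}^n}\nabla^k v\cdot\nabla^k u\,dx$, and a convective commutator. By Lemma \ref{lem1} and the a priori smallness, the commutator together with the $\mathrm{div}\,u$ contribution is bounded by $C\delta\|\nabla^k u\|_{L^2}^2$ and absorbed, leaving $\frac{d}{dt}\|\nabla^k u\|_{L^2}^2+\frac12\|\nabla^k u\|_{L^2}^2\le C\|\nabla^k v\|_{L^2}^2$, so that Gr\"onwall transfers any decay rate of $\|\nabla^k v\|_{L^2}$ to $\|\nabla^k u\|_{L^2}$. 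Thus the whole problem reduces to the high-order decay of $v$.

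For $v$ I would pair $\nabla^k$ of the Navier--Stokes part $v_t-\Delta v=-v\cdot\nabla v-\nabla P+\rho(u-v)$ with $\nabla^k v$. Since $\mathrm{div}\,v=0$, the pressure term and the principal part of the convection term vanish, the remaining convective commutator is $O(\delta)\|\nabla^k v\|_{L^2}^2$, and the dissipation $\|\nabla^{k+1}v\|_{L^2}^2$ survives on the left. Exactly as in Proposition \ref{p2} I would split frequencies over $X_1(t)$: the dissipation dominates $\frac{n}{t+n}\|\nabla^k v\|_{L^2}^2$ up to the low-frequency remainder $\int_{X_1(t)}|\xi|^{2k}|\hat v(t,\xi)|^2\,d\xi$, which is controlled by $\big(\frac{n}{t+n}\big)^k\|v\|_{L^2}^2$ and hence by the already-known $L^2$ decay. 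More robustly, the interpolation $\|\nabla^k v\|_{L^2}^2\le\epsilon\|\nabla^{k+1}v\|_{L^2}^2+C_\epsilon\|\nabla^{k-1}v\|_{L^2}^2$ converts the surviving dissipation into a genuine order-one damping of $\|\nabla^k v\|_{L^2}^2$, modulo the lower-order quantity $\|\nabla^{k-1}v\|_{L^2}^2$ which carries the inductively known rate $(1+t)^{-\frac{n}{2}}$; this is what keeps the final rate from degrading under the weak parabolic damping.

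The crux is the drag coupling $\int_{\mathbb{R}^n}\nabla^k(\rho(u-v))\cdot\nabla^k v\,dx$. Its part $-\int_{\mathbb{R}^n}\nabla^k(\rho v)\cdot\nabla^k v\,dx$ yields the favourable term $-\int_{\mathbb{R}^n}\rho|\nabla^k v|^2\,dx\le0$ plus commutators controlled by $\delta$, but the part $\int_{\mathbb{R}^n}\nabla^k(\rho u)\cdot\nabla^k v\,dx$ is dangerous because $\nabla^k\rho$ carries no time decay, only the uniform bound $\|\nabla^k\rho\|_{L^2}\le C\delta$ from the a priori assumption \eqref{a-priori est}. I would integrate by parts to move one derivative onto $v$, bounding this term by $\epsilon\|\nabla^{k+1}v\|_{L^2}^2+C_\epsilon\|\nabla^{k-1}(\rho u)\|_{L^2}^2$, so that the costly derivative is absorbed into the dissipation and only the lower-order factor $\|\nabla^{k-1}(\rho u)\|_{L^2}$ remains. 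By Lemma \ref{lem1} the latter is $\le C\delta(\|\nabla^{k-1}u\|_{L^2}+\|\nabla^{k-1}\rho\|_{L^2}\|u\|_{L^\infty})$, and the $L^\infty$ norm is pulled down to the base rate through the Sobolev embedding $\|u\|_{L^\infty}\le C\|\nabla u\|_{H^{s-2}}$ fed by the bootstrap ansatz, keeping the drag forcing at the level $C\delta^2\mathcal{I}_0^2(1+t)^{-\frac{n}{2}}$. The order-one damping then delivers $\|\nabla^k v\|_{L^2}\le C\mathcal{I}_0(1+t)^{-\frac{n}{4}}$.

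Finally I would organise all of this as one continuity argument for the functional $\sup_{0\le\tau\le t}(1+\tau)^{\frac{n}{2}}\sum_{1\le k\le s+1}(\|\nabla^k u(\tau)\|_{L^2}^2+\|\nabla^k v(\tau)\|_{L^2}^2)$, deriving the order-$k$ inequalities in increasing $k$ with base case $k=0$ given by Proposition \ref{p2}, every $L^\infty$ norm supplied at the base rate by the ansatz, and every genuinely nonlinear term carrying a factor $\delta$ for absorption. I expect the main difficulty to be precisely the drag term $\nabla^k(\rho(u-v))$: because the density enjoys no decay, one must route its high derivatives through the dissipation by integration by parts while simultaneously guaranteeing that the velocity factors supply the full $(1+t)^{-\frac{n}{4}}$ decay, so that the inevitable weakness of the Fourier-splitting damping does not lower the rate. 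Keeping the $\rho$--$u$--$v$ coupling and the Sobolev $L^\infty$ bounds mutually consistent across all orders within a single bootstrap is the delicate bookkeeping that makes the estimate close.
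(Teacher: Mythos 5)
Your overall architecture coincides with the paper's proof: the paper introduces the bootstrap functional $N(t)=\sup_{0\le\tau\le t}\big\{(1+\tau)^{n/4}(\|\nabla u(\tau)\|_{H^s}+\|\nabla v(\tau)\|_{H^s})\big\}$, derives for each $1\le k\le s+1$ the inequality $\frac{d}{dt}\|\nabla^k v\|_{L^2}^2+\|\nabla\nabla^k v\|_{L^2}^2\le C\delta(1+t)^{-n/2}N(t)^2$, converts the weak parabolic dissipation into order-one damping by Fourier splitting over the \emph{fixed} unit ball (the low-frequency remainder being dominated by $\|v\|_{L^2}^2\le C\mathcal{I}_0^2(1+t)^{-n/2}$ from Proposition \ref{p2}), transfers the decay to $u$ through the damped equation \eqref{newu} exactly as you do, and closes with $N(t)\le C\mathcal{I}_0+C\delta^{1/2}N(t)$. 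Your interpolation variant of the damping and your time-dependent splitting set are harmless deviations from this.

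There is, however, a genuine gap in your treatment of the drag term. You split $\rho(u-v)$ into $\rho u$ and $\rho v$, and for $-\int_{\mathbb{R}^n}\nabla^k(\rho v)\cdot\nabla^k v\,dx$ you extract the sign-definite piece $-\int_{\mathbb{R}^n}\rho|\nabla^k v|^2dx\le 0$ plus ``commutators controlled by $\delta$,'' citing the bound $\|\nabla^k\rho\|_{L^2}\le C\delta$ ``from the a priori assumption \eqref{a-priori est}.'' But \eqref{a-priori est} controls only $\|\rho\|_{H^s}$, while $k$ runs up to $s+1$ (the statement concerns $\|\nabla v\|_{H^s}$, i.e. $\nabla^k v$ for $1\le k\le s+1$). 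At top order the commutator $\nabla^{s+1}(\rho v)-\rho\nabla^{s+1}v$ contains the Leibniz term $v\,\nabla^{s+1}\rho$, and its $L^2$ estimate via Lemma \ref{lem1} requires $\|\nabla^{s+1}\rho\|_{L^2}$, which is simply not available: the continuity equation has no diffusion, so $\rho$ never gains regularity beyond $H^s$. The repair is precisely the device you already employ on the $\rho u$ half, and which the paper applies to the \emph{whole} term: never let $\nabla^k\rho$ survive in an estimate. The paper peels off $(u-v)\cdot\nabla^k\rho$, bounds the remaining commutator by Lemma \ref{lem1} (which involves only $\nabla^{k-1}\rho$ and $\|\rho\|_{L^\infty}$), and integrates the peeled piece by parts so that only $\nabla^{k-1}\rho\le\delta$ appears, paired with the decaying factors $\|u-v\|_{L^\infty}$ and $\|\nabla(u-v)\|_{L^\infty}$, the cost $\frac12\|\nabla\nabla^k v\|_{L^2}^2$ being absorbed into the dissipation. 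If you treat $\rho(u-v)$ in one block this way, instead of splitting off $\rho v$, your argument closes as written.
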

				\begin{proof}
					To deal with the high-order derivatives, we first give the assumption of the energy functional  $N(t)$ as follows
					\begin{equation}
						N(t)=\sup_{0\leq \tau\leq t} \Big\{ (1+\tau)^{\frac{n}{4}}(\|\nabla u(\tau)\|_{H^s}+\|\nabla v(\tau)\|_{H^s}) \Big\},	
					\end{equation}
					with the integer $s\geq 2[\frac{n}{2}]+1$. It is easy to check  that 
					\begin{equation}
						\|\nabla u(t)\|_{H^s}\leq (1+t)^{-\frac{n}{4}}N(t),\quad 	\|\nabla v(t)\|_{H^s}\leq (1+t)^{-\frac{n}{4}}N(t).
					\end{equation}
					For $1\leq k \leq s+1$, applying $\nabla^k$ to $\eqref{Main1}_3$ and taking the $L^2$ inner product with $\nabla^k v$ in $\mathbb{R}^n$ yield 
					\begin{equation}\label{H1}
						\frac{1}{2}\frac{d}{dt}\|\nabla^k v\|_{L^2}^2+\|\nabla \nabla^kv\|_{L^2}^2=-\int_{\mathbb{R}^n}\nabla^k(v\cdot\nabla v)\cdot\nabla^kvdx
						+\int_{\mathbb{R}^n}\nabla^k(\rho(u-v))\cdot\nabla^k vdx.
					\end{equation}
					In terms of Lemma \ref{lem1}, the first term on the right-hand side of \eqref{H1} can be estimated below
					$$
					\begin{aligned}
						&\Big| \int_{\mathbb{R}^n}\nabla^k(v\cdot\nabla v)\cdot\nabla^kvdx\Big|\\
						&\leq \Big| \int_{\mathbb{R}^n}[\nabla^k(v\cdot\nabla v)-v\nabla^k\nabla v]\cdot\nabla^kvdx\Big|
						+\Big| \int_{\mathbb{R}^n}v\nabla^k\nabla v\cdot\nabla^kvdx\Big|\\
						&\leq C\|\nabla v\|_{L^\infty}\|\nabla^kv\|_{L^2}^2\\
						&\leq C\|\nabla^2v\|_{H^{s-2}}\|\nabla^kv\|_{L^2}^2\\
						&\leq C\delta(1+t)^{-\frac{n}{2}}N(t)^2,
					\end{aligned}
					$$
					here we use the a priori assumption \eqref{a-priori est} and the fact that 
					$$
					\int_{\mathbb{R}^n}v\nabla^k\nabla v\cdot\nabla^kvdx=-\frac{1}{2}\int_{\mathbb{R}^n}\text{div}v|\nabla^kv|^2dx=0,~~~ \|\nabla^kv\|_{L^2}\leq (1+t)^{-\frac{n}{4}}N(t) ~~\text{for}~~1\leq k\leq s+1.
					$$
					The second term on the right-hand side of \eqref{H1} is stated as 
					$$
					\begin{aligned}
						&\Big| \int_{\mathbb{R}^n}\nabla^k(\rho(u-v))\cdot\nabla^kvdx\Big|\\
						&\leq \Big| \int_{\mathbb{R}^n}[\nabla^k(\rho(u-v))-(u-v)\cdot\nabla^k\rho]\cdot\nabla^kvdx\Big|
						+\Big| \int_{\mathbb{R}^n}(u-v)\cdot\nabla^k\rho \cdot\nabla^kvdx\Big|\\
						&\leq \Big| \int_{\mathbb{R}^n}[\nabla^k(\rho(u-v))-(u-v)\cdot\nabla^k\rho]\cdot\nabla^kvdx\Big|
						+\Big| \int_{\mathbb{R}^n}\nabla (u-v)\cdot\nabla^{k-1}\rho \cdot\nabla^kvdx\Big|+\Big| \int_{\mathbb{R}^n} (u-v)\cdot\nabla^{k-1}\rho \cdot\nabla\nabla^{k}vdx\Big|\\
						&\leq C(\|\nabla(u-v)\|_{L^\infty}\|\nabla^{k-1}\rho\|_{L^2}+
						\|\nabla^k(u-v)\|_{L^2}\|\rho\|_{L^\infty})\|\nabla^kv\|_{L^2}\\
						&\quad+C\|\nabla(u-v)\|_{L^\infty}\|\nabla^{k-1}\rho\|_{L^2}\|\nabla^kv\|_{L^2}+
						\|u-v\|_{L^\infty}\|\nabla^{k-1}\rho\|_{L^2}\|\nabla\nabla^{k}v\|_{L^2}\\
						&\leq C(\|\nabla^2(u-v)\|_{H^{s-2}}\|\nabla^{k-1}\rho\|_{L^2}+
						\|\nabla^k(u-v)\|_{L^2}\|\nabla\rho\|_{H^{s-2}})\|\nabla^kv\|_{L^2}\\
						&\quad+C\|\nabla^2(u-v)\|_{H^{s-2}}\|\nabla^{k-1}\rho\|_{L^2}\|\nabla^kv\|_{L^2}+C\|\nabla(u-v)\|_{H^{s-2}}\|\nabla^{k-1}\rho\|_{L^2}\|\nabla\nabla^{k}v\|_{L^2}\\
						&\leq C\delta(\|\nabla u\|_{H^{s}}+\|\nabla v\|_{H^{s}})(\|\nabla^kv\|_{L^2}+\|\nabla\nabla^kv\|_{L^2})\\
						&\leq C\delta(1+t)^{-\frac{n}{2}}N(t)^2+\frac{1}{2}\|\nabla\nabla^kv\|_{L^2}^2.
					\end{aligned}
					$$
					It then concludes from the above estimates to obtain for $1\leq k\leq s+1$ that  
					\begin{equation}\label{H2}
						\frac{d}{dt}\|\nabla^k v\|_{L^2}^2+\|\nabla \nabla^kv\|_{L^2}^2
						\leq C\delta(1+t)^{-\frac{n}{2}}N(t)^2.
					\end{equation}
					Define the sets $X_2(t)$ and  $X_2^c(t)$ as follows
					\begin{equation}\label{x2t}
						X_2(t)=\Big\{\xi: |\xi|\leq 1\Big\},~X_2^c(t)=\Big\{\xi: |\xi|>1\Big\}.
					\end{equation}
					With the help of Plancherel Theorem, it holds 
					\begin{equation}\label{H3}
						\begin{aligned}
							&\frac{d}{dt}\|\nabla^kv\|_{L^2}^2
							+\int_{X_2(t)}|\xi|^{2k+2}|\hat{v}(t,\xi)|^2d\xi+\int_{X_2^c(t)}|\xi|^{2k+2}|\hat{v}(t,\xi)|^2d\xi \\
							&\leq C\delta(1+t)^{-\frac{n}{2}}N(t)^2.
						\end{aligned}
					\end{equation}
					Noting that if $\xi \in X_2^c(t)$, we have 
					$$
					\int_{X_2^c(t)}|\xi|^{2k+2}|\hat{v}(t,\xi)|^2d\xi\ge \int_{X_2^c(t)}|\xi|^{2k}|\hat{v}(t,\xi)|^2d\xi.
					$$
					Therefore, \eqref{H3} can be written into 
					\begin{equation}\label{H4}
						\frac{d}{dt}\|\nabla^kv\|_{L^2}^2+
						\int_{X_2^c(t)}|\xi|^{2k}|\hat{v}(t,\xi)|^2d\xi \leq C\delta(1+t)^{-\frac{n}{2}}N(t)^2.
					\end{equation}
					We add a term $\int_{X_2(t)}|\xi|^{2k}|\hat{v}(t,\xi)|^2d\xi$ on \eqref{H4} both two sides to prove 
					$$
					\begin{aligned}
						&\frac{d}{dt}\|\nabla^kv\|_{L^2}^2
						+\int_{\mathbb{R}^n}|\xi|^{2k}|\hat{v}(t,\xi)|^2d\xi \\
						&\leq \int_{X_2(t)}|\xi|^{2k}|\hat{v}(t,\xi)|^2d\xi+C\delta(1+t)^{-\frac{n}{2}}N(t)^2.
					\end{aligned}
					$$
					Using Plancherel Theorem again 	and Proposition \ref{p2} yield
					\begin{equation}\label{H6}
						\begin{aligned}
							\frac{d}{dt}\|\nabla^kv\|_{L^2}^2
							+\|\nabla^kv\|_{L^2}^2
							&	\leq \int_{X_2(t)}|\xi|^{2k}|\hat{v}(t,\xi)|^2d\xi+C\delta(1+t)^{-\frac{n}{2}}N(t)^2\\
							&	\leq \int_{X_2(t)}|\hat{v}(t,\xi)|^2d\xi+C\delta(1+t)^{-\frac{n}{2}}N(t)^2\\
							& \leq 	\int_{\mathbb{R}^3}|\hat{v}(t,\xi)|^2d\xi+C\delta(1+t)^{-\frac{n}{2}}N(t)^2\\
							& \leq \|v\|_{L^2}^2+C\delta(1+t)^{-\frac{n}{2}}N(t)^2\\
							&\leq  C\mathcal{I}_0^2(1+t)^{-\frac{n}{2}}+C\delta(1+t)^{-\frac{n}{2}}N(t)^2.
						\end{aligned}
					\end{equation}
					In terms of Gr$\ddot{\text{o}}$nwall  inequality, we are able to prove 
					$$
					\|\nabla^k v\|_{L^2}\leq C\mathcal{I}_0(1+t)^{-\frac{n}{4}}+
					C\delta^{\frac12} (1+t)^{-\frac{n}{4}}N(t).
					$$
					Summing $k$ from 1 to $s+1$ yields
					\begin{equation}\label{H8}
						\|\nabla v(t)\|_{H^{s}}\leq C(1+t)^{-\frac{n}{4}}(\mathcal{I}_0+\delta^{\frac12} N(t)).
					\end{equation}
					
					Let the integer $1\leq k\leq s+1$. Applying $\nabla^k$ to \eqref{newu}, multiplying it by $\nabla^ku$, and integrating it in $\mathbb{R}^n$ yields 
					\begin{equation}\label{H9}
						\begin{aligned}
							\frac{1}{2}\frac{d}{dt}\|\nabla^ku\|_{L^2}^2+\|\nabla^ku\|_{L^2}^2
							=\int_{\mathbb{R}^n}\nabla^k(-u\cdot\nabla u)\cdot\nabla^kudx+\int_{\mathbb{R}^n}\nabla^ku\cdot\nabla^kvdx.
						\end{aligned}	
					\end{equation}
					By H$\ddot{\text{o}}$lder inequality, Sobolev inequality and Lemma  \ref{lem1}, we can prove
					\begin{equation}\label{H10}
						\begin{aligned}
							&\Big|\int_{\mathbb{R}^n}\nabla^k(-u\cdot\nabla u)\cdot\nabla^kudx\Big|+\Big|\int_{\mathbb{R}^n}\nabla^ku\cdot\nabla^kvdx\Big|\\
							&\leq \Big| \int_{\mathbb{R}^n}(\nabla^k(u\cdot\nabla u)-u\cdot \nabla\nabla^ku)\cdot\nabla^kudx\Big|+  \Big| \int_{\mathbb{R}^n}u\cdot \nabla\nabla^ku\cdot\nabla^kudx\Big|+\Big|\int_{\mathbb{R}^n}\nabla^ku\cdot\nabla^kvdx\Big|\\
							&\leq C\|\nabla u\|_{L^\infty}\|\nabla^ku\|_{L^2}^2+\frac{1}{2}\|\text{div}u\|_{L^\infty}\|\nabla^ku\|_{L^2}^2
							+\frac{1}{4}\|\nabla^ku\|_{L^2}^2+\|\nabla^kv\|_{L^2}^2\\
							&\leq C\|\nabla^2 u\|_{H^{s-2}}\|\nabla^ku\|_{L^2}^2+\frac{1}{4}\|\nabla^ku\|_{L^2}^2+\|\nabla^kv\|_{L^2}^2\\
							&\leq C\delta\|\nabla^ku\|_{L^2}^2+\frac{1}{4}\|\nabla^ku\|_{L^2}^2+\|\nabla^kv\|_{L^2}^2
							\\
							&\leq \frac{1}{2}\|\nabla^ku\|_{L^2}^2+\|\nabla^kv\|_{L^2}^2.
						\end{aligned}	
					\end{equation}
					The combination of \eqref{H9} and \eqref{H10} and summing $k$ from 1 to $s+1$ gives 
					\begin{equation}\label{H11}
						\begin{aligned}
							\frac{1}{2}\frac{d}{dt}\|\nabla u\|_{H^{s}}^2+\frac{1}{2}\|\nabla u\|_{H^{s}}^2
							\leq \|\nabla v\|_{H^{s}}^2.	
						\end{aligned}	
					\end{equation}
					From \eqref{H8}, it holds 
					\begin{equation}\label{H12}
						\begin{aligned}
							\frac{d}{dt}\|\nabla u\|_{H^{s}}^2+\|\nabla u\|_{H^{s}}^2
							\leq C(1+t)^{-\frac{n}{2}}(\mathcal{I}_0+\delta^{\frac12} N(t))^2.
						\end{aligned}	
					\end{equation}
					In terms of  Gr$\ddot{\text{o}}$nwall inequality, we have
					\begin{equation}\label{H13}
						\|\nabla u(t)\|_{H^{s}}\leq C(1+t)^{-\frac{n}{4}}(\mathcal{I}_0+\delta^{\frac12} N(t)).
					\end{equation}
					Finally, it then follows from \eqref{H8}, \eqref{H13} and the definition of $N(t)$ to show
					$$
					N(t)\leq C\mathcal{I}_0+C\delta^{\frac12} N(t).
					$$
					Since $\delta$ is sufficiently small, we immediately obtain 
					\begin{equation}\label{31601}
						N(t)\leq C\mathcal{I}_0.
					\end{equation}
					Consequently, one has 
					$$
					\|\nabla u(t)\|_{H^s}\leq C\mathcal{I}_0(1+t)^{-\frac{n}{4}},\quad \|\nabla v(t)\|_{H^s}\leq C\mathcal{I}_0(1+t)^{-\frac{n}{4}}.
					$$
					This completes the proof of the proposition.
				\end{proof}
				
				To establish the upper and lower bound of density, we give the weighted time decay estimate of $u$ in the next proposition.
				
				\begin{prop}\label{p6}
					Assume the conditions in Theorem \ref{Th1} hold. Let $(\rho,u,v)(t,x)$ be the classical solution of the system \eqref{Main1}-\eqref{ID} satisfying the a priori assumption \eqref{a-priori est}. Then, it holds for $t\geq 0$ that 
					\begin{equation}
						\int_0^t(1+\tau)^{\frac{9}{8}}\|\nabla u(\tau)\|_{H^s}^2d\tau \leq C(\mathcal{I}_0^2+\mathcal{I}_0^3),
					\end{equation}	
					where the positive constant $C$ is independet of time.
				\end{prop}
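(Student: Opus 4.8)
The plan is to use the strong damping in the $u$-equation \eqref{newu} to reduce the claim to a weighted time-integral estimate for $\nabla v$, and then to obtain the latter by running the $v$-hierarchy \eqref{H1}--\eqref{H2} together with the basic energy identity \eqref{T2}, closing the two weighted functionals for $u$ and $v$ at the same time. Accordingly I set $\mathcal{A}_u(t)=\int_0^t(1+\tau)^{\frac98}\|\nabla u(\tau)\|_{H^s}^2\,d\tau$ and $\mathcal{A}_v(t)=\int_0^t(1+\tau)^{\frac98}\|\nabla v(\tau)\|_{H^s}^2\,d\tau$, and the goal is $\mathcal{A}_u(t)+\mathcal{A}_v(t)\le C(\mathcal{I}_0^2+\mathcal{I}_0^3)$.

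First I would multiply the damped inequality obtained by doubling \eqref{H11}, that is $\frac{d}{dt}\|\nabla u\|_{H^s}^2+\|\nabla u\|_{H^s}^2\le 2\|\nabla v\|_{H^s}^2$, by $(1+t)^{\frac98}$ and integrate over $[0,t]$. Using $(1+t)^{\frac98}\frac{d}{dt}\|\nabla u\|_{H^s}^2=\frac{d}{dt}\big[(1+t)^{\frac98}\|\nabla u\|_{H^s}^2\big]-\frac98(1+t)^{\frac18}\|\nabla u\|_{H^s}^2$ and the fact that the damping carries the full coefficient (no derivative is lost), this yields $\mathcal{A}_u(t)\le\|\nabla u_0\|_{H^s}^2+\frac98\int_0^t(1+\tau)^{\frac18}\|\nabla u\|_{H^s}^2\,d\tau+2\mathcal{A}_v(t)$. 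The correction integral is harmless: by Proposition \ref{p3} one has $\|\nabla u\|_{H^s}^2\le C\mathcal{I}_0^2(1+\tau)^{-\frac n2}$, and since $n\ge3$ gives $\frac18-\frac n2<-1$, the integral $\int_0^t(1+\tau)^{\frac18-\frac n2}\,d\tau$ converges. Hence $\mathcal{A}_u(t)\le C\mathcal{I}_0^2+2\mathcal{A}_v(t)$, so it remains to bound $\mathcal{A}_v$.

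For $\mathcal{A}_v$ I would split $\|\nabla v\|_{H^s}^2=\|\nabla v\|_{L^2}^2+\sum_{k=2}^{s+1}\|\nabla^k v\|_{L^2}^2$ and control the two parts by different dissipations. The lowest part is read off \eqref{T2}: since $2\|\nabla v\|_{L^2}^2\le-\frac{d}{dt}E$, weighting by $(1+t)^{\frac98}$ and integrating gives $\int_0^t(1+\tau)^{\frac98}\|\nabla v\|_{L^2}^2\,d\tau\le C\big(E_0+\int_0^t(1+\tau)^{\frac18}E\,d\tau\big)\le C\mathcal{I}_0^2$, where I use $E(\tau)\le C\mathcal{I}_0^2(1+\tau)^{-\frac n2}$ from \eqref{O11}. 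For each higher part with $2\le k\le s+1$, I observe that $\|\nabla^k v\|_{L^2}^2$ is exactly the dissipation in the hierarchy \eqref{H2} taken at index $k-1\ge1$, i.e. $\frac{d}{dt}\|\nabla^{k-1}v\|_{L^2}^2+\|\nabla^k v\|_{L^2}^2\le\mathrm{Source}_{k-1}$; weighting by $(1+t)^{\frac98}$ and integrating expresses $\int_0^t(1+\tau)^{\frac98}\|\nabla^k v\|_{L^2}^2\,d\tau$ in terms of initial data, the harmless correction $\int_0^t(1+\tau)^{\frac18}\|\nabla^{k-1}v\|_{L^2}^2\,d\tau$, and the source integral $\int_0^t(1+\tau)^{\frac98}\mathrm{Source}_{k-1}\,d\tau$.

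The hard part is this last source integral. The convection term $v\cdot\nabla v$ and, above all, the coupling $\rho(u-v)$ differentiated to top order decay only like $(1+\tau)^{-\frac n2}$ (the factor $\rho$ is $O(\delta)$ but does not decay), and $\int_0^t(1+\tau)^{\frac98-\frac n2}\,d\tau$ diverges for $n=3,4$; hence the pointwise decay rates alone are insufficient. I would overcome this in two ways. For $v\cdot\nabla v$ and for the top-order part of $\rho(u-v)$ I keep the undecayed norms and use the smallness of $\delta$: by Lemma \ref{lem1} and the a priori bound these contribute $C\delta\,\mathcal{A}_v$ and, through the factors $\nabla^j u$ with $j\ge1$, $C\delta\,\mathcal{A}_u$, both absorbable after closing. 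For the lowest-order part of $\rho(u-v)$ I integrate by parts to shift one derivative onto $\nabla^k v$, absorb the resulting $\frac14\|\nabla^k v\|_{L^2}^2$ into the dissipation, and estimate the remainder by $C\|\rho(u-v)\|_{L^2}^2\le C\delta\int_{\mathbb{R}^n}\rho|u-v|^2\,dx$; the key point is that this weighted coupling dissipation is already under control with a \emph{larger} weight, because $\int_0^t(1+\tau)^{\frac{n+2}4}\int_{\mathbb{R}^n}\rho|u-v|^2\,dx\,d\tau\le C(\mathcal{I}_0+\mathcal{I}_0^2)$ by \eqref{ET} and $\frac{n+2}4\ge\frac54>\frac98$ for $n\ge3$, so that the factor $\delta\le C\mathcal{I}_0$ upgrades its contribution to $C(\mathcal{I}_0^2+\mathcal{I}_0^3)$. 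Collecting these estimates gives $\mathcal{A}_v\le C(\mathcal{I}_0^2+\mathcal{I}_0^3)+C\delta(\mathcal{A}_u+\mathcal{A}_v)$; inserting $\mathcal{A}_u\le C\mathcal{I}_0^2+2\mathcal{A}_v$ and absorbing $C\delta\,\mathcal{A}_v$ for $\delta$ small yields $\mathcal{A}_v\le C(\mathcal{I}_0^2+\mathcal{I}_0^3)$, and therefore the claimed bound on $\mathcal{A}_u=\int_0^t(1+\tau)^{\frac98}\|\nabla u\|_{H^s}^2\,d\tau$.
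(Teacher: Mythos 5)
Your proposal is correct, and while it shares the paper's basic skeleton (a Fourier-free weighted energy ladder: the identity \eqref{T2} for the lowest-order $v$ estimate, the damping in \eqref{newu} and \eqref{H11} for $u$, and the heat-dissipation hierarchy \eqref{H2} for higher derivatives of $v$), it deviates from the paper's proof at the two points that matter. First, the paper treats the critical coupling term in the $\nabla^2 v$ estimate by bounding it with $C\delta\|u-v\|_{L^2}^2+\frac12\|\nabla^2v\|_{L^2}^2$ and then proving a separate weighted integral bound \eqref{31508} for $\|u-v\|_{L^2}^2$ through the damped relative-velocity equation \eqref{u-v2}, which in turn requires an $L^2$ bound on $\nabla P$; you instead keep the density weight, $\|\rho(u-v)\|_{L^2}^2\le C\delta\int_{\mathbb{R}^n}\rho|u-v|^2dx$, and recycle the drag-dissipation bound \eqref{ET} (together with \eqref{Mt} and \eqref{31702}) already established inside the proof of Proposition \ref{p2}, exploiting $\frac{n+2}{4}\ge\frac54>\frac98$ and $\delta\lesssim\mathcal{I}_0$; this bypasses the entire \eqref{u-v2} analysis. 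Second, the paper closes by an order-by-order induction ($\nabla v\to\nabla u\to u-v\to\nabla^2 v\to\nabla^2 u\to\cdots$), only sketched beyond the second rung, whereas you close all orders at once by absorption on the two functionals $\mathcal{A}_u,\mathcal{A}_v$, which is cleaner and makes the induction unnecessary. Two points you should make explicit when writing this out: (i) at intermediate orders $3\le k\le s+1$ the derivative-free part of the coupling term does not literally reduce to $\|\rho(u-v)\|_{L^2}^2$ — after the integration by parts you face $\|\nabla^{k-2}(\rho(u-v))\|_{L^2}^2$, which must be expanded by Lemma \ref{lem1} and controlled via \eqref{L1} (i.e. $\|u-v\|_{L^\infty}\le C\|\nabla(u-v)\|_{H^{s-2}}$) so that every resulting term carries at least one derivative of $u$ or $v$ and hence lands in $C\delta(\mathcal{A}_u+\mathcal{A}_v)$; only the bottom rung $k=2$ produces the genuinely derivative-free term that needs \eqref{ET}; (ii) before absorbing $C\delta(\mathcal{A}_u+\mathcal{A}_v)$ you should observe that $\mathcal{A}_u(t),\mathcal{A}_v(t)<\infty$ for each fixed $t$ (immediate from the a priori assumption \eqref{a-priori est}), so the subtraction is legitimate. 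Neither point is a gap; both are routine, and your argument is, if anything, more economical than the paper's.
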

				\begin{proof}
					First, we consider the lower order derivatives of $v$. In terms of \eqref{T2}, one has 
					\begin{equation}
						\frac{d}{dt}E(t)+\|\nabla v\|_{L^2}^2\leq 0.
					\end{equation}
					Multiplying it by $(1+t)^{9/8}$ gives 
					\begin{equation}\label{31501}
						\frac{d}{dt}\Big\{(1+t)^{\frac{9}{8}}E(t)\Big\}+(1+t)^{\frac{9}{8}}\|\nabla v\|_{L^2}^2\leq \frac{9}{8}(1+t)^{\frac{1}{8}}E(t).
					\end{equation}
					It follows from Proposition \ref{p2} that
					\begin{equation}
						E(t)\leq C\mathcal{I}_0^2(1+t)^{-\frac{n}{2}},
					\end{equation}
					therefore integrating \eqref{31501} with time yields 
					\begin{equation}\label{31503}
						\int_0^t(1+\tau)^{\frac{9}{8}}\|\nabla v(\tau)\|_{L^2}^2d\tau\leq C\mathcal{I}_0^2\int_0^t(1+\tau)^{-\frac{n}{2}+\frac{1}{8}}d\tau+E_0\leq C\mathcal{I}_0^2.
					\end{equation}
					Next, we investigate the weighted time decay estimate for $\|\nabla u\|_{L^2}$. Taking $\int_{\mathbb{R}^n}\nabla \eqref{newu}\cdot\nabla udx$, it holds
					\begin{equation}
						\begin{aligned}
							\frac{1}{2}\frac{d}{dt}\|\nabla u\|_{L^2}^2+\|\nabla u\|_{L^2}^2
							&\leq \Big|\int_{\mathbb{R}^n}\nabla(-u\cdot\nabla u)\cdot\nabla udx\Big|+\Big|\int_{\mathbb{R}^n}\nabla u\cdot\nabla vdx\Big|\\
							&\leq  C\mathcal{I}_0^3(1+t)^{-\frac{3n}{4}}+\frac{1}{2}\|\nabla u\|_{L^2}^2+\frac{1}{2}\|\nabla v\|_{L^2}^2.	
						\end{aligned}	
					\end{equation}
					Then  
					\begin{equation}\label{31502}
						\frac{d}{dt}\|\nabla u\|_{L^2}^2+\|\nabla u\|_{L^2}^2
						\leq  C\mathcal{I}_0^3(1+t)^{-\frac{3n}{4}}+\|\nabla v\|_{L^2}^2.	
					\end{equation}
					Multiplying \eqref{31502} by $(1+t)^{9/8}$, intergrating it with time and using \eqref{31503}, one has   
					\begin{equation}\label{31505}
						\begin{aligned}
							\int_0^t(1+\tau)^{\frac{9}{8}}\|\nabla u(\tau)\|_{L^2}^2d\tau
							&\leq C\mathcal{I}_0^3\int_0^t(1+\tau)^{-\frac{3n}{4}+\frac{9}{8}}d\tau+\int_0^t(1+\tau)^{\frac{9}{8}}\|\nabla v(\tau)\|_{L^2}^2d\tau+\|\nabla u_0\|_{L^2}^2\\
							&\leq C(\mathcal{I}_0^2+\mathcal{I}_0^3).
						\end{aligned}
					\end{equation}
					
					To obtain the weighted time decay estimates for the high-order derivatives of $u$,	we need to establish the estimates for $(u-v)$.
					
					Let $\eqref{newu}-\eqref{Main1}_3$, we obtain a new system for $(u-v)$ 
					\begin{equation}\label{u-v2}
						(u-v)_t+u-v=-u\cdot\nabla u+v\cdot\nabla v+\nabla P-\Delta v-\rho(u-v).	
					\end{equation}
					Multiply \eqref{u-v2} by $(u-v)$ and integrate over $\mathbb{R}^n$
					\begin{equation}
						\begin{aligned}
							\frac{1}{2}\frac{d}{dt}\|u-v\|_{L^2}^2+\|u-v\|_{L^2}^2=\int_{\mathbb{R}^n}
							(-u\cdot\nabla u+v\cdot\nabla v+\nabla P-\Delta v-\rho(u-v))\cdot (u-v)dx.		
						\end{aligned}
					\end{equation}
					It is easy to verify 
					$$
					\begin{aligned}
						&\Big| \int_{\mathbb{R}^n}
						(-u\cdot\nabla u+v\cdot\nabla v+\nabla P-\Delta v-\rho(u-v))\cdot (u-v)dx\Big|\\
						&\leq \Big| \int_{\mathbb{R}^n}
						(-u\cdot\nabla u+v\cdot\nabla v+\nabla P)\cdot (u-v)dx\Big|+
						\Big| \int_{\mathbb{R}^n}
						(-\Delta v-\rho(u-v))\cdot (u-v)dx\Big|\\
						&\leq (\|u\|_{L^\infty}\|\nabla u\|_{L^2}+\|v\|_{L^\infty}\|\nabla v\|_{L^2}+\|\nabla P\|_{L^2})\|u-v\|_{L^2}+\|\nabla v\|_{L^2}\|\nabla (u-v)\|_{L^2}+\|\rho\|_{L^\infty}\|u-v\|_{L^2}^2\\
						&\leq C(\|u\|_{L^\infty}\|\nabla u\|_{L^2}+\|v\|_{L^\infty}\|\nabla v\|_{L^2})\|u-v\|_{L^2}+\|\nabla v\|_{L^2}(\|\nabla u\|_{L^2}+\|\nabla v\|_{L^2})+C\|\rho\|_{L^\infty}\|u-v\|_{L^2}^2\\
						&\leq C\mathcal{I}_0^3(1+t)^{-\frac{3n}{4}}+\frac{3}{2}\|\nabla v\|_{L^2}^2+\frac{1}{2}\|\nabla u\|_{L^2}^2+\frac{1}{2}\|u-v\|_{L^2}^2,
					\end{aligned}
					$$
					where we use the fact that 
					$$
					\|\nabla P\|_{L^2}\leq C\|v\|_{L^\infty}\|\nabla v\|_{L^2}+C\|\rho\|_{L^\infty}\|u-v\|_{L^2}.
					$$
					Then, it holds 
					\begin{equation}\label{31504}
						\begin{aligned}
							\frac{d}{dt}\|u-v\|_{L^2}^2+\|u-v\|_{L^2}^2\leq C\mathcal{I}_0^3(1+t)^{-\frac{3n}{4}}+3\|\nabla v\|_{L^2}^2+\|\nabla u\|_{L^2}^2.	
						\end{aligned}
					\end{equation}
					Multiplying \eqref{31504} by $(1+t)^{9/8}$, then integrating with time and using \eqref{31503}, \eqref{31505}, one has
					\begin{equation}\label{31508}
						\int_0^t	(1+\tau)^{\frac{9}{8}}\|(u-v)(\tau)\|_{L^2}^2d\tau\leq C(\mathcal{I}_0^2+\mathcal{I}_0^3) .
					\end{equation} 
					
					Now we are in a position to estimate $\int_0^t (1+\tau)^{\frac{9}{8}}\|\nabla^2 v(\tau)\|_{L^2}^2d\tau$. Applying $\nabla $ to $\eqref{Main1}_3$ and taking the $L^2$ inner product with $\nabla v$ in $\mathbb{R}^n$ yield 
					\begin{equation}\label{31506}
						\frac{1}{2}\frac{d}{dt}\|\nabla v\|_{L^2}^2+\| \nabla^2v\|_{L^2}^2=-\int_{\mathbb{R}^n}\nabla (v\cdot\nabla v)\cdot\nabla vdx
						+\int_{\mathbb{R}^n}\nabla (\rho(u-v))\cdot\nabla vdx.
					\end{equation}
					Noting that 
					$$
					\Big|\int_{\mathbb{R}^n}\nabla (v\cdot\nabla v)\cdot\nabla vdx \Big|\leq C\mathcal{I}_0^3(1+t)^{-\frac{3n}{4}}.
					$$
					Integration by parts, we have 
					\begin{equation}
						\Big|\int_{\mathbb{R}^n}\nabla (\rho(u-v))\cdot\nabla vdx\Big|\leq C\|\rho\|_{L^\infty}\|u-v\|_{L^2}\|\nabla^2v\|_{L^2}
						\leq C\delta\|u-v\|_{L^2}^2+\frac{1}{2}\|\nabla^2v\|_{L^2}^2.
					\end{equation}
					Then, it holds
					\begin{equation}\label{31507}
						\frac{d}{dt}\|\nabla v\|_{L^2}^2+\| \nabla^2v\|_{L^2}^2\leq C\mathcal{I}_0^3(1+t)^{-\frac{3n}{4}}+C\delta\|u-v\|_{L^2}^2.
					\end{equation}
					Multiplying \eqref{31507} by $(1+t)^{9/8}$, intergrating it with time and using \eqref{31508} give
					\begin{equation}\label{31509}
						\int_0^t (1+\tau)^{\frac{9}{8}}\|\nabla^2 v(\tau)\|_{L^2}^2d\tau\leq C(\mathcal{I}_0^2+\mathcal{I}_0^3).
					\end{equation}
					Considering $\int_{\mathbb{R}^n}\nabla^2 \eqref{newu}\cdot\nabla^2 udx$, one has
					\begin{equation}
						\begin{aligned}
							\frac{1}{2}\frac{d}{dt}\|\nabla^2 u\|_{L^2}^2+\|\nabla^2 u\|_{L^2}^2
							&\leq \Big|\int_{\mathbb{R}^n}\nabla^2(-u\cdot\nabla u)\cdot\nabla^2 udx\Big|+\Big|\int_{\mathbb{R}^n}\nabla^2 u\cdot\nabla^2 vdx\Big|\\
							&\leq  C\mathcal{I}_0^3(1+t)^{-\frac{3n}{4}}+\frac{1}{2}\|\nabla ^2u\|_{L^2}^2+\frac{1}{2}\|\nabla^2 v\|_{L^2}^2.	
						\end{aligned}	
					\end{equation}
					Then 
					\begin{equation}\label{31510}
						\frac{d}{dt}\|\nabla^2 u\|_{L^2}^2+\|\nabla^2 u\|_{L^2}^2
						\leq  C\mathcal{I}_0^3(1+t)^{-\frac{3n}{4}}+\|\nabla^2 v\|_{L^2}^2.	
					\end{equation}
					We multiply \eqref{31510} by $(1+t)^{9/8}$, then intergrate over time and use \eqref{31509} to prove  
					\begin{equation}\label{31511}
						\begin{aligned}
							\int_0^t(1+\tau)^{\frac{9}{8}}\|\nabla^2 u(\tau)\|_{L^2}^2d\tau
							&\leq C\mathcal{I}_0^3\int_0^t(1+\tau)^{-\frac{3n}{4}+\frac{9}{8}}d\tau+\int_0^t(1+\tau)^{\frac{9}{8}}\|\nabla v(\tau)\|_{L^2}^2d\tau+\|\nabla^2u_0\|_{L^2}^2\\
							&\leq C(\mathcal{I}_0^2+\mathcal{I}_0^3).
						\end{aligned}
					\end{equation}
					We repeat the above process  and use induction method to prove the following weighted estimates for the high-order derivatives of $u$,
					\begin{equation}
						\int_0^t(1+\tau)^{\frac{9}{8}}\|\nabla u(\tau)\|_{H^s}^2d\tau \leq C(\mathcal{I}_0^2+\mathcal{I}_0^3).
					\end{equation}
					This completes the proof of the proposition.
				\end{proof}

				We prove the uniform-in-time upper bound and the lower bound of the time decay estimate of the fluid density $\rho(t,x)$ in the following proposition.
				\begin{prop}\label{p4}
					Assume the conditions in Theorem \ref{Th1} hold. Let $(\rho,u,v)(t,x)$ be the classical solution of the system \eqref{Main1}-\eqref{ID} satisfying the a priori assumption \eqref{a-priori est}.  Then, it holds for $t\geq 0$ that 
					\begin{equation}
						\|\rho(t)\|_{H^s}^2+	\|u(t)\|_{H^{s+2}}^2+\|v(t)\|_{H^{s+1}}^2+\int_0^t\left(\|\nabla u(\tau)\|_{H^{s+1}}^2+\|\nabla v(\tau)\|_{H^{s+1}}^2\right)d\tau
						\leq C\delta_0^2,
					\end{equation}
					and 
					\begin{equation} \label{rholower}
						\rho(t,x)\geq \rho_0(X(0,x))e^{-C(\mathcal{I}_0^2+\mathcal{I}_0^3)^{\frac{1}{2}}}>0,
					\end{equation}
					where the positive constant $C$ is independent of time.
				\end{prop}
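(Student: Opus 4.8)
The plan is to extract from Proposition \ref{p6} a single scalar quantity that controls everything, namely the finiteness of $\int_0^\infty\|\nabla u(\tau)\|_{H^s}\,d\tau$, and then feed it into both a characteristic representation for $\rho$ and a Gr\"onwall argument for the $H^s$-norm of $\rho$. Indeed, since the weight exponent satisfies $9/8>1$, Cauchy--Schwarz gives
$$\int_0^t\|\nabla u(\tau)\|_{H^s}\,d\tau\le\Big(\int_0^t(1+\tau)^{-\frac{9}{8}}\,d\tau\Big)^{\frac12}\Big(\int_0^t(1+\tau)^{\frac{9}{8}}\|\nabla u(\tau)\|_{H^s}^2\,d\tau\Big)^{\frac12}\le C(\mathcal{I}_0^2+\mathcal{I}_0^3)^{\frac12},$$
where the first factor is finite precisely because $9/8>1$ and the second is bounded by Proposition \ref{p6}. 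Combined with the Sobolev embedding in Lemma \ref{lem1}, namely $\|\nabla u\|_{L^\infty}\le C\|\nabla^2u\|_{H^{s-2}}\le C\|\nabla u\|_{H^s}$, this yields $\int_0^t\|\nabla u(\tau)\|_{L^\infty}\,d\tau\le C(\mathcal{I}_0^2+\mathcal{I}_0^3)^{\frac12}$, which is the engine for the density estimates.

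For the lower bound \eqref{rholower}, I would first rewrite $\eqref{Main1}_1$ in non-conservative form $\rho_t+u\cdot\nabla\rho+\rho\,{\rm div}\,u=0$ and introduce the flow map $X(\tau,x)$ generated by $u$. Along trajectories the density solves a linear ODE, so that $\rho(t,x)=\rho_0(X(0,x))\exp\big(-\int_0^t{\rm div}\,u\,d\tau\big)$. Bounding the exponent by $\int_0^t\|{\rm div}\,u\|_{L^\infty}\,d\tau\le\int_0^t\|\nabla u\|_{L^\infty}\,d\tau\le C(\mathcal{I}_0^2+\mathcal{I}_0^3)^{\frac12}$ from the previous paragraph, and using $\rho_0>0$, gives exactly \eqref{rholower} and in particular the uniform positivity $\rho(t,x)>0$ that justifies the reformulation \eqref{newu}.

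For the uniform energy bound I would treat the three unknowns in turn. For $\rho$, applying $\nabla^k$ ($0\le k\le s$) to the transport equation, pairing with $\nabla^k\rho$ and using the commutator and product estimates of Lemma \ref{lem1}, yields $\frac{d}{dt}\|\rho\|_{H^s}^2\le C\|\nabla u\|_{H^s}\|\rho\|_{H^s}^2$; Gr\"onwall together with the time-integrability above then gives $\|\rho(t)\|_{H^s}^2\le\|\rho_0\|_{H^s}^2\exp\big(C(\mathcal{I}_0^2+\mathcal{I}_0^3)^{\frac12}\big)\le C\delta_0^2$, where the exponential is a fixed, time-independent constant even though $\mathcal{I}_0$ need not be small. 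For $v$, the parabolic structure of $\eqref{Main1}_3$ produces the dissipation $\|\nabla v\|_{H^{s+1}}^2$; the advection $v\cdot\nabla v$ is handled by a commutator together with ${\rm div}\,v=0$, and the drag $\rho(u-v)$ by Lemma \ref{lem1} and the smallness of $\|\rho\|_{L^\infty}$. For $u$, I would differentiate the damped equation \eqref{newu}, so that the linear term $+u$ furnishes the dissipation $\|u\|_{H^{s+2}}^2$ with source $\|\nabla^k v\|_{L^2}^2$; summing over $0\le k\le s+2$, combining with the decay estimates of Propositions \ref{p2}--\ref{p3} and the $\rho$-bound, and integrating in time, I would close everything at the level $C\delta_0^2$, thereby improving and closing the a priori assumption \eqref{a-priori est}.

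The main obstacle is the top-order coupling in the $u$-estimate: closing $\|u\|_{H^{s+2}}$ forces the source $\nabla^{s+2}v$, one derivative beyond the $H^{s+1}$ energy carried by $v$. This term is not available from the $v$-energy itself but is exactly supplied by the parabolic dissipation $\|\nabla v\|_{H^{s+1}}^2$, which contains $\|\nabla^{s+2}v\|_{L^2}^2$; the delicate point is to absorb it via Young's inequality into the $v$-dissipation while keeping the remaining constant small, so that the combined differential inequality retains a genuine dissipation on the right scale. A secondary subtlety, already addressed above, is that the Gr\"onwall constant for $\rho$ depends on $\mathcal{I}_0$ rather than $\delta_0$; this is harmless because it multiplies $\|\rho_0\|_{H^s}^2=O(\delta_0^2)$ and is independent of time.
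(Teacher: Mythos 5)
Your treatment of the density is correct and coincides with the paper's: the Cauchy--Schwarz splitting $\int_0^t\|\nabla u\|_{H^s}d\tau\le C(\mathcal{I}_0^2+\mathcal{I}_0^3)^{1/2}$ via the $(1+\tau)^{9/8}$ weight from Proposition \ref{p6}, the Gr\"onwall bound $\|\rho\|_{H^s}^2\le\|\rho_0\|_{H^s}^2e^{C(\mathcal{I}_0^2+\mathcal{I}_0^3)^{1/2}}$, and the characteristics formula for the lower bound \eqref{rholower} are exactly the paper's argument, and your remark that the exponential constant involving $\mathcal{I}_0$ is harmless there is the right observation. Your ``main obstacle'' paragraph is also not actually an obstacle: as in the paper, the source $\|\nabla v\|_{H^{s+1}}^2$ in the $u$-estimate is paid for by the $v$-dissipation (the paper's \eqref{TT2} produces it with coefficient $2$ and \eqref{TT3} consumes it with coefficient $1$).

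The genuine gap is in the velocity estimates, at the \emph{zeroth-order} (undifferentiated) level. First, your plan to sum the $u$-equation over $0\le k\le s+2$ and close ``by combining with the decay estimates of Propositions \ref{p2}--\ref{p3}'' cannot produce the bound $C\delta_0^2$: those decay estimates carry the prefactor $\mathcal{I}_0=\delta_0+\|\rho_0\|_{L^1}+\|v_0\|_{L^1}$, which is \emph{not} small under the hypotheses of Theorem \ref{Th1}, so feeding $\|v\|_{L^2}\le C\mathcal{I}_0(1+t)^{-n/4}$ into the damped $k=0$ equation only yields $\|u\|_{L^2}\le C\mathcal{I}_0$ --- too weak to close the a priori assumption \eqref{a-priori est} or to prove the stated inequality. (You noticed this $\mathcal{I}_0$-versus-$\delta_0$ issue for $\rho$, where it enters multiplicatively through an exponential, but for $u,v$ it would enter additively and is fatal.) Second, the drag term in the $L^2$-estimate for $v$, $\int\rho(u-v)\cdot v\,dx$, forces the undifferentiated norm $\|u-v\|_{L^2}$ into the estimate (the paper bounds it by $\|\rho\|_{L^n}\|u-v\|_{L^2}\|v\|_{L^{2n/(n-2)}}\le C\delta\|u-v\|_{L^2}\|\nabla v\|_{L^2}$), and on $\mathbb{R}^n$ there is no Poincar\'e inequality, so $\int_0^t\|u-v\|_{L^2}^2d\tau$ is not controlled by any gradient dissipation. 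This is precisely why the paper introduces the auxiliary damped equation \eqref{u-v2} for $u-v$, derives the estimate \eqref{TT1} for $\|u-v\|_{L^2}^2$ and $\int_0^t\|u-v\|_{L^2}^2d\tau$, includes $\|(u-v)\|_{L^2}^2$ in the dissipation functional $\mathcal{D}(t)$ of \eqref{Dt}, and closes by the weighted combination $\eqref{TT2}+\tfrac12\eqref{TT1}+\eqref{TT3}$; the uniform bound on $\|u\|_{L^2}$ then follows from $\|u\|_{L^2}\le\|u-v\|_{L^2}+\|v\|_{L^2}$ together with the basic energy inequality $\|v\|_{L^2}^2\le E_0\le C\delta_0^2$. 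Your proposal contains no substitute for this $(u-v)$ step, so the absorption of the drag terms and the $L^2$-bound for $u$ at level $\delta_0$ are both unproved.
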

				\begin{proof}
					Multiplying \eqref{Main1} by $\rho$ and integrating over  $\mathbb{R}^n$, we have 
					\begin{equation}\label{A11}
						\begin{aligned}
							\frac{1}{2}\frac{d}{dt}\|\rho\|_{L^2}^2&\leq \Big|\int_{\mathbb{R}^n}\rho{\rm{div}}(\rho u)dx\Big|
							\leq \Big|\int_{\mathbb{R}^n}\nabla\rho\cdot (\rho u)dx\Big|
							\leq \|\text{div}u\|_{L^\infty}\|\rho\|_{L^2}^2
							\leq C\|\nabla^2u\|_{H^{s-2}}\|\rho\|_{L^2}^2.
						\end{aligned}
					\end{equation}
					For $1\leq k\leq s$, applying $\nabla^k$ to $\eqref{Main1}_1$, then taking $L^2$ inner product with $\nabla^k\rho$ in $\mathbb{R}^n$ gives 
					$$
					\begin{aligned}
						\frac{1}{2}\frac{d}{dt}\|\nabla^k\rho\|_{L^2}^2&\leq \Big|\int_{\mathbb{R}^n}\nabla^k\rho\cdot\nabla^k{\rm{div}}(\rho u)dx\Big|\\
						&\leq  \Big|\int_{\mathbb{R}^n}\nabla^k\rho\cdot\nabla^k(\rho \text{div}u)dx\Big|+\Big|\int_{\mathbb{R}^n}\nabla^k\rho\cdot\nabla^k(u\cdot\nabla\rho)dx\Big|\\
						&\leq \Big|\int_{\mathbb{R}^n}\nabla^k\rho\cdot[\nabla^k(\rho{\rm{div}}u)-\rho\nabla^k\text{div}u]dx\Big|+\Big|\int_{\mathbb{R}^n}\nabla^k\rho\cdot\rho\nabla^k\text{div}udx\Big|\\
						&\quad+\Big|\int_{\mathbb{R}^n}\nabla^k\rho\cdot[\nabla^k(u\cdot\nabla\rho)-u\nabla^k\nabla \rho]dx\Big|+\Big|\int_{\mathbb{R}^n}\nabla^k\rho\cdot u\nabla^k\nabla \rho dx\Big|\\
						&\leq C\|\nabla^k\rho\|_{L^2}(\|\nabla\rho\|_{L^\infty}\|\nabla^k u\|_{L^2}
						+\|\text{div}u\|_{L^\infty}\|\nabla^k\rho\|_{L^2})+C\|\rho\|_{L^\infty}\|\nabla^k\rho\|_{L^2}\|\nabla^{k+1}u\|_{L^2}\\
						&\quad+C\|\nabla^k\rho\|_{L^2}(\|\nabla u\|_{L^\infty}\|\nabla^k \rho\|_{L^2}
						+\|\nabla\rho\|_{L^\infty}\|\nabla^k u\|_{L^2})+C\|\text{div}u\|_{L^\infty}\|\nabla^k \rho\|_{L^2}^2\\
						&\leq C\|\nabla^k \rho\|_{L^2}(\|\nabla^2\rho\|_{H^{s-2}}\|\nabla^k u\|_{L^2}+\|\nabla^2 u\|_{H^{s-2}}\|\nabla^k \rho\|_{L^2})
						+C\|\nabla\rho\|_{H^{s-2}}\|\nabla^k \rho\|_{L^2}\|\nabla^{k+1}u\|_{L^2}\\
						&\quad+C\|\nabla^k \rho\|_{L^2}(\|\nabla^2u\|_{H^{s-2}}\|\nabla^k \rho\|_{L^2}+\|\nabla^2\rho\|_{H^{s-2}}\|\nabla^k u\|_{L^2})+C\|\nabla^2u\|_{H^{s-2}}\|\nabla^k \rho\|_{L^2}^2.
					\end{aligned}
					$$
					Summing $k$ from $1$ to $s$, we get
					$$
					\begin{aligned}
						\frac{1}{2}\frac{d}{dt}\|\nabla \rho\|_{H^{s-1}}^2
						&\leq C\|\nabla \rho\|_{H^{s-1}}(\|\nabla^2\rho\|_{H^{s-2}}\|\nabla u\|_{H^{s-1}}+\|\nabla^2 u\|_{H^{s-2}}\|\nabla \rho\|_{H^{s-1}})
						+C\|\nabla\rho\|_{H^{s-2}}\|\nabla \rho\|_{H^{s-1}}\|\nabla^{2}u\|_{H^{s-1}}\\
						&\quad+C\|\nabla \rho\|_{H^{s-1}}(\|\nabla^2u\|_{H^{s-2}}\|\nabla \rho\|_{H^{s-1}}+\|\nabla^2\rho\|_{H^{s-2}}\|\nabla u\|_{H^{s-1}})+C\|\nabla^2u\|_{H^{s-2}}\|\nabla \rho\|_{H^{s-1}}^2\\
						&\leq C\|\nabla u\|_{H^{s}}\|\nabla \rho\|_{H^{s-1}}^2,
					\end{aligned}
					$$
					which, together with \eqref{A11}, also leads to
					\begin{equation}
						\frac{d}{dt}\|\rho\|_{H^s}^2\leq C\|\nabla u\|_{H^s}\|\rho\|_{H^s}^2.
					\end{equation}
					In terms of Proposition \ref{p6},  it holds
					\begin{equation}\label{32702}
						\begin{aligned}
							\|\rho\|_{H^s}^2&\leq \|\rho_0\|_{H^s}^2\exp\Big\{C\int_0^t\|\nabla u(\tau)\|_{H^s}d\tau\Big\}\\
							&\leq \|\rho_0\|_{H^s}^2\exp \Big\{C\int_0^t(1+\tau)^{-\frac{9}{16}}(1+\tau)^{\frac{9}{16}}\|\nabla u(\tau)\|_{H^s}d\tau\Big\}\\
							&\leq \|\rho_0\|_{H^s}^2\exp \Big\{C(\int_0^t(1+\tau)^{-\frac{9}{8}}d\tau)^{\frac{1}{2}}(\int_0^t(1+\tau)^{\frac{9}{8}}\|\nabla u(\tau)\|_{H^s}^2d\tau)^{\frac{1}{2}}\Big\}\\
							&\leq \|\rho_0\|_{H^s}^2e^{C(\mathcal{I}_0^2+\mathcal{I}_0^3)^{\frac{1}{2}}}\\
							&\leq C\delta_0^2.
						\end{aligned}
					\end{equation}
					Now we are in a position to prove the uniform bound of the $(u,v)$. Multiplying $\eqref{Main1}_3$ by $v$ and integrating the resulting equaiton in $\mathbb{R}^n$ yield
					$$
					\begin{aligned}
						&\frac{1}{2}\frac{d}{dt}\|v\|_{L^2}^2+\|\nabla v\|_{L^2}^2\\
						&=
						-\int_{\mathbb{R}^n}v\cdot\nabla v\cdot vdx
						+\int_{\mathbb{R}^n}\rho(u-v)\cdot vdx=\int_{\mathbb{R}^n}\rho(u-v)\cdot vdx.
					\end{aligned}
					$$
					By Sobolev inequality, the a priori assumption \eqref{a-priori est} and the definition of $\mathcal{D}(t)$ in \eqref{Dt}, it holds 
					$$
					\Big|\int_{\mathbb{R}^n}\rho(u-v)\cdot vdx\Big|\leq \|\rho\|_{L^n}\|u-v\|_{L^2}\|v\|_{L^{\frac{2n}{n-2}}}\leq C\delta \| u-v\|_{L^2}\|\nabla v\|_{L^2}\leq C\delta \mathcal{D}(t)^2.
					$$
					Then, we find 
					\begin{equation}\label{D3}
						\frac{1}{2}\frac{d}{dt}\|v\|_{L^2}^2+\|\nabla v\|_{L^2}^2
						\leq C\delta \mathcal{D}(t)^2.
					\end{equation}
					For the integer $1\leq k\leq s+1$, applying $\nabla^k$ to $\eqref{Main1}_3$ and multiplying it with $\nabla^k v$, then integrating in $\mathbb{R}^n$ yields 
					\begin{equation}\label{D1}
						\begin{aligned}
							&\frac{1}{2}\frac{d}{dt}\|\nabla^k v\|_{L^2}^2+\|\nabla \nabla^kv\|_{L^2}^2\\
							&=-\int_{\mathbb{R}^n}\nabla^k(v\cdot\nabla v)\cdot\nabla^kvdx
							+\int_{\mathbb{R}^n}\nabla^k(\rho(u-v))\cdot\nabla^k vdx.
						\end{aligned}
					\end{equation}
					The first term on the right-hand side of \eqref{D1} can be estimated below
					$$
					\begin{aligned}
						&\Big| \int_{\mathbb{R}^n}\nabla^k(v\cdot\nabla v)\cdot\nabla^kvdx\Big|\\
						&\leq \Big| \int_{\mathbb{R}^n}[\nabla^k(v\cdot\nabla v)-v\nabla\nabla^k v]\cdot\nabla^kvdx\Big|
						+\Big| \int_{\mathbb{R}^n}v\nabla\nabla^k v\cdot\nabla^kvdx\Big|\\
						&\leq C\|\nabla v\|_{L^\infty}\|\nabla^kv\|_{L^2}^2\\
						&\leq C\|\nabla^2v\|_{H^{s-2}}\|\nabla^kv\|_{L^2}^2\\
						&\leq  C\delta\|\nabla^kv\|_{L^2}^2.
					\end{aligned}
					$$
					The second term on the right-hand side of \eqref{D1} can be written as 
					$$
					\begin{aligned}
						&\Big| \int_{\mathbb{R}^n}\nabla^k(\rho(u-v))\cdot\nabla^kvdx\Big|\\
						&\leq \Big| \int_{\mathbb{R}^n}[\nabla^k(\rho(u-v))-(u-v)\nabla^k\rho]\cdot\nabla^kvdx\Big|
						+\Big| \int_{\mathbb{R}^n}(u-v)\cdot\nabla^k\rho \cdot\nabla^kvdx\Big|\\
						&\leq\Big| \int_{\mathbb{R}^n}[\nabla^k(\rho(u-v))-(u-v)\nabla^k\rho]\cdot\nabla^kvdx\Big|+\Big| \int_{\mathbb{R}^n}\nabla(u-v)\cdot\nabla^{k-1}\rho \cdot\nabla^kvdx\Big|
						+\Big| \int_{\mathbb{R}^n}(u-v)\cdot\nabla^{k-1}\rho \cdot\nabla\nabla^kvdx\Big|\\
						&\leq C(\|\nabla(u-v)\|_{L^\infty}\|\nabla^{k-1}\rho\|_{L^2}+
						\|\nabla^k(u-v)\|_{L^2}\|\rho\|_{L^\infty})\|\nabla^kv\|_{L^2}\\
						&\quad+C\|\nabla(u-v)\|_{L^\infty}\|\nabla^{k-1}\rho\|_{L^2}\|\nabla^kv\|_{L^2}+C\|u-v\|_{L^\infty}
						\|\nabla^{k-1}\rho\|_{L^2}\|\nabla\nabla^{k}v\|_{L^2}\\
						&\leq C(\|\nabla^2(u-v)\|_{H^{s-2}}\|\nabla^{k-1}\rho\|_{L^2}+
						\|\nabla^k(u-v)\|_{L^2}\|\nabla\rho\|_{H^{s-2}})\|\nabla^kv\|_{L^2}\\
						&\quad+C\|\nabla^2(u-v)\|_{H^{s-2}}\|\nabla^{k-1}\rho\|_{L^2}\|\nabla^kv\|_{L^2}+C\|\nabla(u-v)\|_{H^{s-2}}
						\|\nabla^{k-1}\rho\|_{L^2}\|\nabla\nabla^{k}v\|_{L^2},
					\end{aligned}
					$$	
					for $1\leq k\leq s+1$. Summing $k$ from $1$ to $s+1$ produces 
					\begin{equation}\label{D2}
						\begin{aligned}
							&\frac{1}{2}\frac{d}{dt}\|\nabla v\|_{H^s}^2+\|\nabla^2v\|_{H^s}^2\\
							&\leq C\delta \|\nabla v\|_{H^s}^2+
							C(\|\nabla^2(u-v)\|_{H^{s-2}}\|\rho\|_{H^s}+
							\|\nabla (u-v)\|_{H^{s}}\|\nabla\rho\|_{H^{s-2}})\|\nabla v\|_{H^s}\\
							&\quad+C\|\nabla^2(u-v)\|_{H^{s-2}}\|\rho\|_{H^s}\|\nabla v\|_{H^s}+C\|\nabla(u-v)\|_{H^{s-2}}
							\|\rho\|_{H^s}\|\nabla^2v\|_{H^{s}}\\
							&\leq C\delta\mathcal{D}(t)^2.
						\end{aligned}
					\end{equation}
					The combination of \eqref{D2} and \eqref{D3} yields 
					\begin{equation}\label{D4}
						\begin{aligned}
							\frac{1}{2}\frac{d}{dt}\|v\|_{H^{s+1}}^2+\|\nabla v\|_{H^{s+1}}^2
							\leq C\delta\mathcal{D}(t)^2.
						\end{aligned}
					\end{equation}
					Integrating with time over $[0,t]$ gives 
					\begin{equation}\label{TT2}
						\|v(t)\|_{H^{s+1}}^2+2\int_0^t\|\nabla v(\tau)\|_{H^{s+1}}^2d\tau\leq C\delta \int_0^t\mathcal{D}(\tau)^2d\tau+\delta_0^2.	
					\end{equation}
					Multiplying \eqref{u-v2} by $(u-v)$ and integrating over $\mathbb{R}^n$, one has
					\begin{equation}
						\begin{aligned}
							\frac{1}{2}\frac{d}{dt}\|u-v\|_{L^2}^2+\|u-v\|_{L^2}^2=\int_{\mathbb{R}^n}
							(-u\cdot\nabla u+v\cdot\nabla v+\nabla P-\Delta v-\rho(u-v))\cdot (u-v)dx.		
						\end{aligned}
					\end{equation}
					It is easy to verify 
					$$
					\begin{aligned}
						&\Big| \int_{\mathbb{R}^n}
						(-u\cdot\nabla u+v\cdot\nabla v+\nabla P-\Delta v-\rho(u-v))\cdot (u-v)dx\Big|\\
						&\leq (\|u\|_{L^\infty}\|\nabla u\|_{L^2}+\|v\|_{L^\infty}\|\nabla v\|_{L^2}+\|\nabla P\|_{L^2}+\|\Delta v\|_{L^2})\|u-v\|_{L^2}+\|\rho\|_{L^\infty}\|u-v\|_{L^2}^2\\
						&\leq C\delta \mathcal{D}(t)^2+(\frac{1}{4}+C\delta )\|u-v\|_{L^2}^2+\|\nabla^2v\|_{L^2}^2\\
						&\leq C\delta \mathcal{D}(t)^2+\frac{1}{2}\|u-v\|_{L^2}^2+\|\nabla^2v\|_{L^2}^2.
					\end{aligned}
					$$
					Hence, we obtain 
					\begin{equation}
						\begin{aligned}
							\frac{1}{2}\frac{d}{dt}\|u-v\|_{L^2}^2+\frac{1}{2}\|u-v\|_{L^2}^2\leq C\delta \mathcal{D}(t)^2+\|\nabla^2v\|_{L^2}^2.
						\end{aligned}
					\end{equation}
					Integrating with time over $[0,t]$  and using \eqref{TT2} give  
					\begin{equation}\label{TT1}
						\begin{aligned}
							&\|(u-v)(t)\|_{L^{2}}^2+\int_0^t\|(u-v)(\tau)\|_{L^{2}}^2d\tau\\
							&\leq C\delta \int_0^t\mathcal{D}(\tau)^2d\tau+2\int_0^t\|\nabla^2v(\tau)\|_{L^2}^2d\tau+\delta_0^2\\
							&\leq  C\delta \int_0^t\mathcal{D}(\tau)^2d\tau+2\delta_0^2.	
						\end{aligned}	
					\end{equation}
					Let the integer $1\leq k\leq s+2$. Applying $\nabla^k$ to \eqref{newu}, multiplying it with $\nabla^ku$, and integrating it in $\mathbb{R}^n$ yields 
					\begin{equation}\label{D9}
						\begin{aligned}
							\frac{1}{2}\frac{d}{dt}\|\nabla^ku\|_{L^2}^2+\|\nabla^ku\|_{L^2}^2
							=\int_{\mathbb{R}^n}\nabla^k(-u\cdot\nabla u)\cdot\nabla^kudx+\int_{\mathbb{R}^n}\nabla^kv\cdot \nabla^kudx.
						\end{aligned}	
					\end{equation}
					By H$\ddot{\text{o}}$lder inequality, Sobolev inequality and Lemma \ref{lem1}, we can prove
					\begin{equation}\label{D10}
						\begin{aligned}
							&\Big|\int_{\mathbb{R}^n}\nabla^k(-u\cdot\nabla u)\cdot\nabla^kudx\Big|+\Big|\int_{\mathbb{R}^n}\nabla^kv\cdot \nabla^kudx\Big|\\
							&\leq C\|\nabla u\|_{L^\infty}\|\nabla^ku\|_{L^2}^2+\frac{1}{2}\|\nabla^ku\|_{L^2}^2+\frac{1}{2}\|\nabla^kv\|_{L^2}^2\\
							&\leq C\|\nabla^2 u\|_{H^{s-2}}\|\nabla^ku\|_{L^2}^2+\frac{1}{2}\|\nabla^ku\|_{L^2}^2+\frac{1}{2}\|\nabla^kv\|_{L^2}^2\\
							&\leq C\delta\|\nabla^ku\|_{L^2}^2+\frac{1}{2}\|\nabla^ku\|_{L^2}^2+\frac{1}{2}\|\nabla^kv\|_{L^2}^2\\
							&\leq \frac{3}{4}\|\nabla^ku\|_{L^2}^2+\frac{1}{2}\|\nabla^kv\|_{L^2}^2.
						\end{aligned}	
					\end{equation}
					The combination of \eqref{D9} and \eqref{D10} and summing $k$ from $1$ to $s+2$ yields
					\begin{equation}\label{D11}
						\frac{d}{dt}\|\nabla u\|_{H^{s+1}}^2+\frac{1}{2}\|\nabla u\|_{H^{s+1}}^2
						\leq\|\nabla v\|_{H^{s+1}}^2.		
					\end{equation}
					Integrating with time over $[0,t]$ yields 
					\begin{equation}\label{TT3}
						\|\nabla u(t)\|_{H^{s+1}}^2+\frac{1}{2}\int_0^t \|\nabla u(\tau)\|_{H^{s+1}}^2d\tau \leq \int_0^t\|\nabla v(\tau)\|_{H^{s+1}}^2d\tau+\delta_0^2.
					\end{equation}
					Taking $\eqref{TT2}+\frac{1}{2}\times\eqref{TT1}+\eqref{TT3}$ yields 
					\begin{equation}
						\begin{aligned}
							&\|v(t)\|_{H^{s+1}}^2+\frac{1}{2}\|(u-v)(t)\|_{L^2}^2+\int_0^t\|\nabla v(\tau)\|_{H^{s+1}}^2d\tau
							+\frac{1}{2}\int_0^t\|(u-v)(\tau)\|_{L^2}^2d\tau\\
							&+	\|\nabla u(t)\|_{H^{s+1}}^2+\frac{1}{2}\int_0^t \|\nabla u(\tau)\|_{H^{s+1}}^2d\tau\\
							&\leq C\delta \int_0^t\mathcal{D}(\tau)^2d\tau+C\delta_0^2.	
						\end{aligned}
					\end{equation}
					It follows from the definition of $\mathcal{D}(t)$ in \eqref{Dt} and the smallness of $\delta$ to prove 
					\begin{equation}\label{32701}
						\begin{aligned}
							\|u(t)\|_{H^{s+2}}^2+\|v(t)\|_{H^{s+1}}^2+\int_0^t(\| \nabla u(\tau)\|_{H^{s+1}}^2+\|\nabla v(\tau)\|_{H^{s+1}}^2+\|(u-v)(\tau)\|_{L^2}^2)d\tau
							\leq C\delta_0^2.
						\end{aligned}	
					\end{equation}
					The combination of 	\eqref{32701} and \eqref{32702} gives 
					$$
					\begin{aligned}
						\|\rho(t)\|_{H^s}^2+\|u(t)\|_{H^{s+2}}^2+\|v(t)\|_{H^{s+1}}^2+\int_0^t(\| \nabla u(\tau)\|_{H^{s+1}}^2+\|\nabla v(\tau)\|_{H^{s+1}}^2)d\tau
						\leq C\delta_0^2.
					\end{aligned}	
					$$
					In the next, to obtain the lower bound of $\rho(t,x)$, we define a backward characteristic $X(t,x)$ satisfies
					\begin{equation}
						\partial_\ell X(\ell,x)=u(\ell,X(\ell,x))\text{ with }X(t,x)=x.
					\end{equation}
					A straightforward calculation yields 
					\begin{equation}
						\frac{d}{ds}\rho(\ell,X(\ell,x))=-\rho(\ell,X(x,\ell)){\rm{div}}u(\ell,X(\ell,x)),
					\end{equation}
					which implies 
					\begin{equation}
						\rho(t,x)=\rho_0(X(0,x))\exp\Big\{-\int_0^t{\rm{div}}u(\tau,X(\tau,x))d\tau\Big\}.
					\end{equation}
					By Proposition \ref{p6}, we deduce that
					$$
					\begin{aligned}
						\Big|\int_0^t{\rm{div}}u(\tau,X(\tau,x))d\tau\Big|
						&\leq C\int_0^t\|\nabla u(\tau)\|_{L^\infty}d\tau\\
						&\leq C\int_0^t\|\nabla u(\tau)\|_{H^s}d\tau\\
						&\leq C(\int_0^t(1+\tau)^{-\frac{9}{8}}d\tau)^{\frac{1}{2}}(\int_0^t(1+\tau)^{\frac{9}{8}}\|\nabla u(\tau)\|_{H^s}^2d\tau)^{\frac{1}{2}}\\
						&\leq C(\mathcal{I}_0^2+\mathcal{I}_0^3)^{\frac{1}{2}}.
					\end{aligned}
					$$
					Since the initial density $\rho_0>0$ for $x\in \mathbb{R}^n$, it gives rise to
					\begin{equation}
						\rho(t,x)\geq \rho_0(X(0,x))e^{-C(\mathcal{I}_0^2+\mathcal{I}_0^3)^{\frac{1}{2}}}>0.
					\end{equation}
					This completes the proof of the proposition.
				\end{proof}
				
				\vskip 4mm
				\noindent{\it\textbf{Proof of Theorem \ref{Th1}.}\ }
				By Proposition \ref{p4}, we obtain 
				\begin{equation}
					\|\rho(t)\|_{H^s}^2+\|u(t)\|_{H^{s+2}}^2+\|v(t)\|_{H^{s+1}}^2+\int_0^t\left(\|\nabla u(\tau)\|_{H^{s+1}}^2+\|\nabla v(\tau)\|_{H^{s+1}}^2\right)d\tau
					\leq C\delta_0^2.
				\end{equation}
				Choosing the initial data $\delta_0$ sufficiently small such that $C\delta_0^2 \leq \frac{1}{4}\delta^2$, we obtain 
				\begin{equation}
					\sup_{t\geq 0}	\mathcal{Z}(t)=\sup_{t\geq 0 }\Big(\|\rho(t)\|_{H^s}^2+\|u(t)\|_{H^{s+2}}^2+\|v(t)\|_{H^{s+1}}^2\Big)^{\frac{1}{2}}\leq \frac{1}{2}\delta, 
				\end{equation}
				which, together with \eqref{rholower}, closes the a priori assumption $(\ref{a-priori est})$. Then, based on the continuous argument, the global existence of solution $(\rho,u,v)$ and the estimates \eqref{T1} are obtained.
				
				It remains to establish the large-time behavior of the solution $(u,v)$.
				By Proposition \ref{p2} and Proposition \ref{p3}, it is easy to verify for $s\geq 2[\frac{n}{2}]+1$ that 
				\begin{equation}
					\|(u,v)(t)\|_{L^2}\leq C\mathcal{I}_0(1+t)^{-\frac{n}{4}},\quad 
					\|\nabla(u,v)(t)\|_{H^s}\leq C\mathcal{I}_0(1+t)^{-\frac{n}{4}}.
				\end{equation}
				This completes the proof of Theorem \ref{Th1}.
				\endproof
				\section{Optimal decay rates of  the pressureless E-NS system}
				\label{S4}
				Indeed, we can prove the time decay rate of velocity $v$ in Proposition \ref{p2}  is optimal. This means that we shall establish the lower bound of the decay rates of $\|v\|_{L^2}$. 
				\begin{prop}\label{p5}
					Assume the same conditions in Theorem \ref{Th1} hold. There exists a sufficiently small constant $\delta_0>0$ such that if 
					\begin{equation} \label{041111}
						\|\rho_0\|_{H^s(\mathbb{R}^n)}+\|u_0\|_{H^{s+2}(\mathbb{R}^n)}+\|v_0\|_{H^{s+1}(\mathbb{R}^n)}+\|v_0\|_{L^1(\mathbb{R}^n)}\leq \delta_0,
					\end{equation}
					and the Fourier transform of the initial velocity $\hat{v}_0(\xi)$ satisfy 
					\begin{equation}
						\inf_{|\xi|\leq1}|\hat{v}_0(\xi)|\geq \delta_0^{\frac{3}{2}},
					\end{equation} 
					then it holds for large time that 
					\begin{equation}
						\|v(t)\|_{L^2} \geq \frac{1}{2}	\delta_0^{\frac{3}{2}}(1+t)^{-\frac{n}{4}}.
					\end{equation}
				\end{prop}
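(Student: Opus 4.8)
The strategy is to compare $v$ with the pure heat flow $w$ solving \eqref{we} with $w_0 = v_0$, for which Lemma \ref{lem3} already furnishes the sharp lower bound $\|w(t)\|_{L^2} \geq \delta_0^{3/2}(1+t)^{-n/4}$. Writing $v = w + (v-w)$ and applying the reverse triangle inequality, it suffices to show that the remainder satisfies $\|(v-w)(t)\|_{L^2} \leq C\delta_0^2 (1+t)^{-n/4}$; then for $\delta_0$ small enough the $C\delta_0^2$ term is dominated by $\tfrac12 \delta_0^{3/2}$, giving $\|v(t)\|_{L^2} \geq (\delta_0^{3/2} - C\delta_0^2)(1+t)^{-n/4} \geq \tfrac12 \delta_0^{3/2}(1+t)^{-n/4}$ for $t$ large.

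The core of the argument is the remainder estimate. The plan is to subtract the heat equation from the reformulated $v$-equation \eqref{T7}, so that $r := v - w$ solves $r_t - \Delta r = F$ with zero initial data, where $F = -v\cdot\nabla v - \nabla P + \rho(u-v)$ is exactly the forcing identified in \eqref{T7}. By Duhamel and the Fourier-side bound \eqref{L2}, namely $|\hat F(\tau,\xi)| \leq |\xi|\,\|v\|_{L^2}^2 + \|\rho(u-v)\|_{L^1}$, one gets
\begin{equation*}
|\hat r(t,\xi)| \leq \int_0^t e^{-|\xi|^2(t-\tau)}\bigl(|\xi|\,\|v(\tau)\|_{L^2}^2 + \|\rho(u-v)(\tau)\|_{L^1}\bigr)\,d\tau.
\end{equation*}
I would then integrate $|\hat r|^2$ over the low-frequency ball $\{|\xi|\le 1\}$ and estimate the high-frequency part separately by an energy bound on $r$ in $\dot H^1$ (or directly from the decay of $\|\nabla v\|_{L^2}$ and $\|\nabla w\|_{L^2}$). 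The first forcing term contributes $|\xi|\,\mathcal I_0^2 (1+\tau)^{-n/2}$ after invoking the upper decay rate $\|v\|_{L^2}\le C\mathcal I_0(1+t)^{-n/4}$ from Proposition \ref{p2}; the second is controlled by the crucial time-integral bound \eqref{041203}, $\int_0^t \|\rho(u-v)(\tau)\|_{L^1}\,d\tau \le C\delta_0^2$, which follows from \eqref{choi3.1} under the stronger smallness hypothesis \eqref{041111} that now includes $\|v_0\|_{L^1}$.

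The delicate point is the low-frequency $L^2$ estimate of the second contribution: since $\int_0^t\|\rho(u-v)\|_{L^1}\,d\tau$ is merely $O(\delta_0^2)$ and carries no pointwise-in-time decay, I would bound $\int_0^t e^{-|\xi|^2(t-\tau)}\|\rho(u-v)(\tau)\|_{L^1}\,d\tau \leq \int_0^t \|\rho(u-v)(\tau)\|_{L^1}\,d\tau \leq C\delta_0^2$ uniformly in $\xi$, and then extract the decay solely from the measure of the frequency region via the Fourier splitting set $X_1(t)$ of \eqref{St}; integrating the constant bound over $|\xi| \le (n/(t+n))^{1/2}$ produces the factor $(1+t)^{-n/2}$, yielding an $L^2$ contribution of order $\delta_0^2(1+t)^{-n/4}$. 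This frequency-splitting step, combined with the $L^1$-in-time smallness \eqref{041203}, is what makes the remainder genuinely smaller (order $\delta_0^2$) than the leading heat term (order $\delta_0^{3/2}$), and I expect it to be the main obstacle—the rest is assembling the two pieces and choosing $\delta_0$ small and $t$ large.
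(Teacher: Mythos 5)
Your overall strategy coincides with the paper's: write $v=w+q$ with $w$ the heat flow from $v_0$, use Lemma \ref{lem3} for the lower bound on $\|w\|_{L^2}$, represent $q$ by Duhamel with the forcing bound \eqref{L2}, aim for $\|q(t)\|_{L^2}\leq C\delta_0^2(1+t)^{-\frac{n}{4}}$, and conclude by the reverse triangle inequality. Your low-frequency estimate is also sound and matches the paper's bound on $\int_{X_3(t)}|\hat q(t,\xi)|^2d\xi$. The genuine gap is your treatment of the high frequencies. You propose to control $\int_{|\xi|>R}|\hat q|^2d\xi$ by an $\dot H^1$ bound on $q=v-w$, but no available bound is strong enough: Proposition \ref{p3} gives $\|\nabla v\|_{H^s}\leq C\mathcal{I}_0(1+t)^{-\frac{n}{4}}$, whose constant $\mathcal{I}_0\geq\|\rho_0\|_{L^1}$ is \emph{not} small, while under \eqref{041111} the Choi--Jung estimate \eqref{choi3.1} yields only $\|\nabla v\|_{L^2}\leq C\delta_0(1+t)^{-\frac{n}{4}+\frac{1}{16}}$, whose rate is too slow. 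With a fixed splitting radius the high-frequency piece is therefore at best $C\delta_0(1+t)^{-\frac{n}{4}+\frac{1}{16}}$ (or $C\mathcal{I}_0(1+t)^{-\frac{n}{4}}$), which is \emph{not} dominated by $\delta_0^{\frac{3}{2}}(1+t)^{-\frac{n}{4}}$ for large $t$, so the reverse triangle inequality yields nothing; with the time-dependent radius of $X_1(t)$ in \eqref{St}, Chebyshev's inequality $\int_{X_1^c(t)}|\hat q|^2d\xi\leq \frac{t+n}{n}\|\nabla q\|_{L^2}^2$ loses a full factor $(1+t)$ and fails even worse.

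The missing idea is Schonbek's Fourier splitting applied as a \emph{differential} inequality, which is how the paper closes the loop: it first derives an energy inequality for $q$ coupled with the kinetic energy, adding the identity \eqref{T21} so that the drag cross-term $\int_{\mathbb{R}^n} v\cdot\rho(u-v)dx$ combines into $-\int_{\mathbb{R}^n}\rho|u-v|^2dx$, the leftover $-\int_{\mathbb{R}^n} w\cdot\rho(u-v)dx$ being absorbed via $\|w\|_{L^\infty}\leq C\mathcal{I}_0(1+t)^{-\frac{n}{2}}$ (Lemma \ref{lem2}); only then is the \emph{dissipation} $\|\nabla q\|_{L^2}^2$ (not $\|q\|_{L^2}^2$) split at radius $\sim(1+t)^{-\frac{1}{2}}$, giving $\frac{d}{dt}H(t)+\frac{n}{t+4n}H(t)\leq C(1+t)^{-1}\int_{X_3(t)}|\hat q|^2d\xi+C\mathcal{I}_0^3(1+t)^{-n}$ for $H(t)=\int_{\mathbb{R}^n}\rho|u|^2dx+\|q\|_{L^2}^2$, and integration against the factor $(t+4n)^n$ converts exactly your low-frequency bound into $\|q\|_{L^2}\leq C\delta_0^2(1+t)^{-\frac{n}{4}}$. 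A secondary but real gap: the bound $\int_0^t\|\rho(u-v)\|_{L^1}d\tau\leq C\delta_0^2$ does not follow directly from \eqref{choi3.1}; Cauchy--Schwarz against $D(\tau)$ only gives $C\delta_0\|\rho_0\|_{L^1}^{\frac{1}{2}}$, and $\|\rho_0\|_{L^1}$ is not assumed small. The paper instead writes $\|\rho(u-v)\|_{L^1}\leq\|\rho\|_{L^2}\|u-v\|_{L^2}\leq C\delta_0\|u-v\|_{L^2}$ and proves the weighted bound $\int_0^t(1+\tau)^{\frac{9}{8}}\|(u-v)(\tau)\|_{L^2}^2d\tau\leq C\delta_0^2$ through the chain of weighted energy estimates \eqref{041103}--\eqref{041108}, which your argument would also need to reproduce.
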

				\begin{proof}
					Since the assumption of  $\|v_0\|_{L^1}$ in \eqref{041111} is also sufficiently small, according to the result proved by Proposition 3.1 \cite{ChoiJung2021JMFM} , we have for any $\alpha \in (0,n/4)$, 
					\begin{equation}
						E(t)(1+t)^{2\alpha}+\int_0^t(1+\tau)^{2\alpha} D(\tau)d\tau\leq C(E(0)+\|v_0\|_{L^1}^2)\leq C\delta_0^2,
					\end{equation}
					which gives 
					\begin{equation}
						E(t)= \int_{\mathbb{R}^n} \rho|u|^2dx+\|v\|_{L^2}^2\leq C\delta_0^2(1+t)^{-2\alpha}, \quad \forall~ 0<\alpha<\frac{n}{4}.
					\end{equation}
					Choosing $\alpha=\frac{n}{4}-\frac{1}{16}$, we have 
					\begin{equation}\label{041102}
						E(t)\leq C\delta_0^2(1+t)^{-\frac{n}{2}+\frac{1}{8}}, \quad \|v(t)\|_{L^2}\leq C\delta_0 (1+t)^{-\frac{n}{4}+\frac{1}{16}}.
					\end{equation}
					Multiplying \eqref{newu} by $u$, integrating it in $\mathbb{R}^n$ and using the definition of $E(t)$ yield
					$$
					\begin{aligned}
						\frac{1}{2}\frac{d}{dt}\|u\|_{L^2}^2+\|u\|_{L^2}^2
						&=\int_{\mathbb{R}^n}(-u\cdot\nabla u+v)\cdot udx\\
						&\leq C\delta\|u\|_{L^2}^2+\frac{1}{4}\|u\|_{L^2}^2+\|v\|_{L^2}^2\\
						&\leq\frac{1}{2}\|u\|_{L^2}^2+C\delta_0^2(1+t)^{-\frac{n}{2}+\frac{1}{8}}.
					\end{aligned}
					$$
					Therefore, we have 
					$$
					\frac{1}{2}\frac{d}{dt}\|u\|_{L^2}^2+\frac{1}{2}\|u\|_{L^2}^2\leq C\delta_0^2(1+t)^{-\frac{n}{2}+\frac{1}{8}}.
					$$
					By Gr$\ddot{\text{o}}$nwall  inequality, it holds for $t\geq 0$ that 
					\begin{equation}\label{041112}
						\|u(t)\|_{L^2}\leq C\delta_0(1+t)^{-\frac{n}{4}+\frac{1}{16}}.
					\end{equation}
					We follow a similar method developed in Porposition \ref{p3} and use  \eqref{041102}, \eqref{041112} to show
					\begin{equation}\label{041201}
						\|\nabla(u,v)(t)\|_{H^s}\leq C\delta_0(1+t)^{-\frac{n}{4}+\frac{1}{16}}.
					\end{equation}
					By \eqref{31501} and \eqref{041102}, we have
					\begin{equation}\label{041103}
						\frac{d}{dt}\Big\{(1+t)^{\frac{9}{8}}E(t)\Big\}+(1+t)^{\frac{9}{8}}\|\nabla v\|_{L^2}^2\leq \frac{9}{8}(1+t)^{\frac{1}{8}}E(t)\leq C\delta_0^2(1+t)^{-\frac{n}{2}+\frac{1}{4}}.
					\end{equation}
					Noting that $n\geq 3$,  therefore integrating \eqref{041103} with time yields 
					\begin{equation}\label{041104}
						\int_0^t(1+\tau)^{\frac{9}{8}}\|\nabla v(\tau)\|_{L^2}^2d\tau\leq C\delta_0^2\int_0^t(1+\tau)^{-\frac{n}{2}+\frac{1}{4}}d\tau+E_0\leq C\delta_0^2.
					\end{equation}
					Taking $\int_{\mathbb{R}^n}\nabla \eqref{newu}\cdot\nabla udx$ and applying \eqref{041201} give rise to
					\begin{equation}
						\begin{aligned}
							\frac{1}{2}\frac{d}{dt}\|\nabla u\|_{L^2}^2+\|\nabla u\|_{L^2}^2
							&\leq \Big|\int_{\mathbb{R}^n}\nabla(-u\cdot\nabla u)\cdot\nabla udx\Big|+\Big|\int_{\mathbb{R}^n}\nabla u\cdot\nabla vdx\Big|\\
							&\leq C\|\nabla u\|_{L^\infty}\|\nabla u\|_{L^2}^2+C\|u\|_{L^\infty}\|\nabla^2u\|_{L^2}\|\nabla u\|_{L^2}
							+\|\nabla u\|_{L^2}\|\nabla v\|_{L^2}\\
							&\leq C\delta_0\|\nabla u\|_{L^2}^2+C\|u\|_{L^\infty}^2\|\nabla^2u\|_{L^2}^2+\frac{1}{4}\|\nabla u\|_{L^2}^2
							+ 2\|\nabla v\|_{L^2}^2\\
							&\leq  C\delta_0^4(1+t)^{-n+\frac{1}{4}}+\frac{1}{2}\|\nabla u\|_{L^2}^2+2\|\nabla v\|_{L^2}^2.	
						\end{aligned}	
					\end{equation}
					Thus, it is easy to show
					\begin{equation}\label{041105}
						\frac{d}{dt}\|\nabla u\|_{L^2}^2+\|\nabla u\|_{L^2}^2
						\leq  C\delta_0^4(1+t)^{-n+\frac{1}{4}}+4\|\nabla v\|_{L^2}^2.	
					\end{equation}
					Multiplying \eqref{041105} by $(1+t)^{9/8}$, intergrating it with time and using \eqref{041104}, one has   
					\begin{equation}\label{041106}
						\begin{aligned}
							\int_0^t(1+\tau)^{\frac{9}{8}}\|\nabla u(\tau)\|_{L^2}^2d\tau
							&\leq C\delta_0^4\int_0^t(1+\tau)^{-n+\frac{11}{8}}d\tau+4\int_0^t(1+\tau)^{\frac{9}{8}}\|\nabla v(\tau)\|_{L^2}^2d\tau+\|\nabla u_0\|_{L^2}^2\\
							&\leq C(\delta_0^2+\delta_0^4).
						\end{aligned}
					\end{equation}
					Multiply \eqref{u-v2} by $(u-v)$ and integrate over $\mathbb{R}^n$
					\begin{equation}
						\begin{aligned}
							\frac{1}{2}\frac{d}{dt}\|u-v\|_{L^2}^2+\|u-v\|_{L^2}^2=\int_{\mathbb{R}^n}
							(-u\cdot\nabla u+v\cdot\nabla v+\nabla P-\Delta v-\rho(u-v))\cdot (u-v)dx.		
						\end{aligned}
					\end{equation}
					It is easy to verify 
					$$
					\begin{aligned}
						&\Big| \int_{\mathbb{R}^n}
						(-u\cdot\nabla u+v\cdot\nabla v+\nabla P-\Delta v-\rho(u-v))\cdot (u-v)dx\Big|\\
						&\leq \Big| \int_{\mathbb{R}^n}
						(-u\cdot\nabla u+v\cdot\nabla v+\nabla P)\cdot (u-v)dx\Big|+
						\Big| \int_{\mathbb{R}^n}
						(-\Delta v-\rho(u-v))\cdot (u-v)dx\Big|\\
						&\leq (\|u\|_{L^\infty}\|\nabla u\|_{L^2}+\|v\|_{L^\infty}\|\nabla v\|_{L^2}+\|\nabla P\|_{L^2})\|u-v\|_{L^2}+\|\nabla v\|_{L^2}\|\nabla (u-v)\|_{L^2}+\|\rho\|_{L^\infty}\|u-v\|_{L^2}^2\\
						&\leq C(\|u\|_{L^\infty}\|\nabla u\|_{L^2}+\|v\|_{L^\infty}\|\nabla v\|_{L^2})\|u-v\|_{L^2}+\|\nabla v\|_{L^2}(\|\nabla u\|_{L^2}+\|\nabla v\|_{L^2})+C\|\rho\|_{L^\infty}\|u-v\|_{L^2}^2\\
						&\leq C\delta_0^4(1+t)^{-n+\frac{1}{4}}+\frac{3}{2}\|\nabla v\|_{L^2}^2+\frac{1}{2}\|\nabla u\|_{L^2}^2+\frac{1}{2}\|u-v\|_{L^2}^2.
					\end{aligned}
					$$
					Then, it holds 
					\begin{equation}\label{041107}
						\begin{aligned}
							\frac{d}{dt}\|u-v\|_{L^2}^2+\|u-v\|_{L^2}^2\leq 
							C\delta_0^4(1+t)^{-n+\frac{1}{4}}+3\|\nabla v\|_{L^2}^2+\|\nabla u\|_{L^2}^2.	
						\end{aligned}
					\end{equation}
					Multiplying \eqref{041107} by $(1+t)^{9/8}$, then integrating with time and using \eqref{041104}, \eqref{041106}, one has
					\begin{equation}\label{041108}
						\int_0^t	(1+\tau)^{\frac{9}{8}}\|(u-v)(\tau)\|_{L^2}^2d\tau\leq C(\delta_0^2+\delta_0^4)\leq C\delta_0^2 .
					\end{equation}   	
					Let $w(t,x)$ be the solution of the heat equation with the initial data $w(0,x)=v_0(x)$ satisfying
					\begin{equation}
						\left\{
						\begin{aligned}
							&	w_t-\Delta w=0,\\	
							&w(0,x)=v_0(x),\quad \text{div}v_0=0.
						\end{aligned}
						\right.
					\end{equation}
					Denote $q=v-w$. It is easy to prove
					$$
					\text{div}w=0,~~ \text{div}q=\text{div}v-\text{div}w=0.
					$$
					Then, the equation of $q$ is stated below
					\begin{equation}\label{O1}
						\left\{
						\begin{aligned}
							&q_t+v\cdot\nabla v+\nabla P=\Delta q+\rho(u-v),\\
							&\text{div}q=0,~~q(0,x)=0.
						\end{aligned}
						\right.
					\end{equation}
					Multiplying $\eqref{O1}_1$ by $q$ and integrating  over $\mathbb{R}^n$, we have
					\begin{equation}\label{O2}
						\begin{aligned}
							&\frac{1}{2}\frac{d}{dt} \|q\|_{L^2}^2+\|\nabla q\|_{L^2}^2 \\
							&=-\int_{\mathbb{R}^n}q\cdot(v\cdot\nabla v)dx-\int_{\mathbb{R}^n}q\cdot\nabla Pdx+\int_{\mathbb{R}^n}q\cdot \rho(u-v)dx\\
							&=\frac{1}{2}\int_{\mathbb{R}^n}\text{div}q\cdot|v|^2dx+\int_{\mathbb{R}^n}P\text{div}qdx+\int_{\mathbb{R}^n}(v-w)\cdot \rho(u-v)dx\\
							&=\int_{\mathbb{R}^n}v\cdot \rho(u-v)dx-\int_{\mathbb{R}^n}w\cdot \rho(u-v)dx.
						\end{aligned}
					\end{equation}
					Substituting \eqref{T21} into \eqref{O2} yields 
					\begin{equation}
						\begin{aligned}
							&\frac{1}{2}\frac{d}{dt}(\int_{\mathbb{R}^n}\rho|u|^2dx+\|q\|_{L^2}^2)+\int_{\mathbb{R}^n}\rho|u-v|^2dx+\|\nabla q\|_{L^2}^2\\
							&=-\int_{\mathbb{R}^n}w\cdot \rho(u-v)dx.
						\end{aligned}
					\end{equation}
					In terms of Cauchy inequality and Sobolev inequality, one has
					\begin{equation}
						\begin{aligned}
							&\frac{1}{2}\frac{d}{dt}\Big(\int_{\mathbb{R}^n}\rho|u|^2dx+\|q\|_{L^2}^2\Big)	+\int_{\mathbb{R}^n}\rho|u-v|^2dx+\|\nabla q\|_{L^2}^2\\
							&\leq \|w\|_{L^\infty}\int_{\mathbb{R}^n}\rho |u-v|dx\\
							&\leq \|w\|_{L^\infty}\|\rho\|_{L^1}^{\frac{1}{2}}
							\Big(\int_{\mathbb{R}^n}\rho|u-v|^2dx\Big)^{\frac{1}{2}}\\
							&\leq \|w\|_{L^\infty}\|\rho_0\|_{L^1}^{\frac{1}{2}}	\Big(\int_{\mathbb{R}^n}\rho|u-v|^2dx\Big)^{\frac{1}{2}}\\
							&\leq \frac{1}{2}\|w\|_{L^\infty}^2\|\rho_0\|_{L^1}+\frac{1}{2} \int_{\mathbb{R}^n}\rho|u-v|^2dx\\
							&\leq C\mathcal{I}_0^3(1+t)^{-n}+ \frac{1}{2}\int_{\mathbb{R}^n}\rho|u-v|^2dx,
						\end{aligned}
					\end{equation}
					where the last step have used the Lemma \ref{lem2} and the definition of $\mathcal{I}_0$ 
					\begin{equation}\label{winfty}
						\|w\|_{L^\infty}\leq C(1+t)^{-\frac{n}{2}}(\|v_0\|_{L^1}+\|\nabla v_0\|_{H^{s-2}})\leq C\mathcal{I}_0(1+t)^{-\frac{n}{2}}.
					\end{equation}
					It then concludes from the above inequalities to prove 
					\begin{equation}
						\frac{1}{2}\frac{d}{dt}\Big(\int_{\mathbb{R}^n}\rho|u|^2dx+\|q\|_{L^2}^2\Big)+\frac{1}{2}\int_{\mathbb{R}^n}\rho|u-v|^2dx+\|\nabla q\|_{L^2}^2	\leq C\mathcal{I}_0^3(1+t)^{-n}.
					\end{equation}
					With the help of the Plancherel Theorem, we verify 
					$$
					\begin{aligned}
						&\frac{1}{2}\frac{d}{dt}\Big(\int_{\mathbb{R}^n}\rho|u|^2dx+\|q\|_{L^2}^2\Big)
						+\int_{X_3(t)}|\xi|^2|\hat{q}(t,\xi)|^2d\xi+\int_{X_3^c(t)}|\xi|^2|\hat{q}(t,\xi)|^2d\xi\\
						&~~+\frac{1}{2}\int_{\mathbb{R}^n}\rho|u-v|^2dx\leq C\mathcal{I}_0^3(1+t)^{-n},
					\end{aligned}
					$$
					where $X_3(t)$ and $X_3^c(t)$  are defined by 
					\begin{equation}
						X_3(t)=\Big\{\xi: |\xi|\leq \Big(\frac{2n}{t+4n}\Big)^{\frac{1}{2}}\Big\},\quad 
						X_3^c(t)=\Big\{\xi: |\xi|>\Big(\frac{2n}{t+4n}\Big)^{\frac{1}{2}}\Big\}.
					\end{equation}
					As a result, we get 
					$$
					\begin{aligned}
						&\frac{1}{2}\frac{d}{dt}\Big(\int_{\mathbb{R}^n}\rho|u|^2dx+\|q\|_{L^2}^2\Big)
						+\frac{2n}{t+4n}\Big(\int_{\mathbb{R}^n}\rho|u-v|^2dx+\|q\|_{L^2}^2\Big)\\
						&\leq \frac{2n}{t+4n}\int_{X_3(t)}|\hat{q}(t,\xi)|^2d\xi
						+C\mathcal{I}_0^3(1+t)^{-n}.
					\end{aligned}
					$$
					It should be mentioned that 
					\begin{equation}\label{T31}
						\begin{aligned}
							\int_{\mathbb{R}^n}\rho|q-u|^2dx
							&\leq 2 \int_{\mathbb{R}^n}\rho|u-v|^2dx+2\int_{\mathbb{R}^n}\rho|w|^2dx\\
							&\leq  2 \int_{\mathbb{R}^n}\rho|u-v|^2dx+2\|\rho\|_{L^1}\|w\|_{L^\infty}^2\\
							&\leq  2 \int_{\mathbb{R}^n}\rho|u-v|^2dx+2\|\rho_0\|_{L^1}\|w\|_{L^\infty}^2.
						\end{aligned}
					\end{equation}
					Therefore, we obtain  
					\begin{equation}\label{O5}
						\int_{\mathbb{R}^n}\rho|u-v|^2dx+\|\rho_0\|_{L^1}\|w\|_{L^\infty}^2
						\geq \frac{1}{2}\int_{\mathbb{R}^n}\rho|q-u|^2dx.
					\end{equation}
					It then follows from above to prove 
					$$
					\begin{aligned}
						&\frac{1}{2}\frac{d}{dt}\Big(\int_{\mathbb{R}^n}\rho|u|^2dx+\|q\|_{L^2}^2\Big)
						+\frac{2n}{t+4n}\Big(\int_{\mathbb{R}^n}\rho|u-v|^2dx+\|q\|_{L^2}^2\Big)+\frac{2n}{t+4n}\|\rho_0\|_{L^1}\|w\|_{L^\infty}^2\\
						&\leq \frac{2n}{t+4n}\int_{X_3(t)}|\hat{q}(t,\xi)|^2d\xi+
						C\mathcal{I}_0^3(1+t)^{-n} +\frac{2n}{t+4n}\|\rho_0\|_{L^1}\|w\|_{L^\infty}^2 .
					\end{aligned}
					$$
					By \eqref{winfty}, we verify 
					\begin{equation}\label{O12}
						\begin{aligned}
							&\frac{1}{2}\frac{d}{dt}\Big(\int_{\mathbb{R}^n}\rho|u|^2dx+\|q\|_{L^2}^2\Big)
							+\frac{2n}{t+4n}\Big(\int_{\mathbb{R}^n}\rho|u-v|^2dx+\|\rho_0\|_{L^1}\|w\|_{L^\infty}^2+\|q\|_{L^2}^2\Big)\\
							&\leq \frac{2n}{t+4n}\int_{X_3(t)}|\hat{q}(t,\xi)|^2d\xi+	C\mathcal{I}_0^3(1+t)^{-n}.
						\end{aligned}
					\end{equation}
					After a direct calculation, one has
					\begin{equation}
						\begin{aligned}
							\int_{\mathbb{R}^n}\rho|u|^2dx +\|q\|_{L^2}^2
							&\leq  2\int_{\mathbb{R}^n}\rho|q-u|^2dx+2\int_{\mathbb{R}^n}\rho|q|^2dx+\|q\|_{L^2}^2\\
							&\leq  2\int_{\mathbb{R}^n}\rho|q-u|^2dx+2(\|\rho\|_{L^\infty}+\frac{1}{2})\|q\|_{L^2}^2\\
							&\leq 2(\int_{\mathbb{R}^n}\rho|q-u|^2dx+\|q\|_{L^2}^2).
						\end{aligned}
					\end{equation}
					Thus we obtain
					\begin{equation}\label{O6}
						\Big(\int_{\mathbb{R}^n}\rho|q-u|^2dx+\|q\|_{L^2}^2\Big)\geq \frac{1}{2}\Big(\int_{\mathbb{R}^n}\rho|u|^2dx+\|q\|_{L^2}^2\Big).
					\end{equation}
					The combination of \eqref{O5} and \eqref{O6} yields 
					$$
					\Big(\int_{\mathbb{R}^n}\rho|u-v|^2dx+\|\rho_0\|_{L^1}\|w\|_{L^\infty}^2+\|q\|_{L^2}^2\Big)\geq\frac{1}{4}
					\Big(\int_{\mathbb{R}^n}\rho|u|^2dx+\|q\|_{L^2}^2\Big).
					$$
					Therefore, \eqref{O12} can be expressed below
					\begin{equation}
						\begin{aligned}
							&\frac{1}{2}\frac{d}{dt}\Big(\int_{\mathbb{R}^n}\rho|u|^2dx+\|q\|_{L^2}^2\Big)+\frac{n}{2(t+4n)}\Big(\int_{\mathbb{R}^n}\rho|u|^2dx+\|q\|_{L^2}^2\Big)\\
							&\leq \frac{2n}{t+4n}\int_{X_3(t)}|\hat{q}(t,\xi)|^2d\xi+C\mathcal{I}_0^3(1+t)^{-n}.
						\end{aligned}
					\end{equation}
					For simplicity, we also define the energy $H(t)$ as follows
					$$
					H(t)=\int_{\mathbb{R}^n}\rho|u|^2dx+\|q\|_{L^2}^2,~~H(0)=\int_{\mathbb{R}^n}\rho_0|u_0|^2dx.
					$$
					It is easy to verify that 
					\begin{equation}\label{O3}
						\frac{d}{dt} H(t)+\frac{n}{t+4n}H(t)
						\leq C(1+t)^{-1}\int_{X_3(t)}|\hat{q}(t,\xi)|^2d\xi+C\mathcal{I}_0^3(1+t)^{-n}.
					\end{equation}
					The next goal is to calculate $\int_{X_3(t)}|\hat{q}(t,\xi)|^2d\xi$. 
					$$
					q_t+v\cdot\nabla v+\nabla P=\Delta q+\rho(u-v).
					$$
					According to the definition of \eqref{T7}, the above equation can be stated below 
					\begin{equation}
						\left\{
						\begin{aligned}
							&q_t-\Delta q=F,\\
							&q(0,x)=0.
						\end{aligned}
						\right.
					\end{equation}
					Taking Fourier transform and using Duhamel's principle, we have
					$$
					\hat{q}(t,\xi)=\int_0^te^{-|\xi|^2(t-\tau)}\hat{F}(\tau,\xi)d\tau.
					$$
					In terms of \eqref{L2} and Proposition \ref{p2}, we can prove 
					$$
					|\hat{F}(\tau,\xi)|\leq |\xi|\|v\|_{L^2}^2+\|\rho(u-v)\|_{L^1}\leq C|\xi|\mathcal{I}_0^2(1+\tau)^{-\frac{n}{2}}+\|\rho(u-v)\|_{L^1}. 
					$$
					It follows from H$\ddot{\text{o}}$lder inequality and \eqref{041108} to prove
					$$
					\begin{aligned}
						\int_0^t\|\rho(u-v)(\tau)\|_{L^1}d\tau
						&\leq	\int_0^t \|\rho(\tau)\|_{L^2}\|(u-v)(\tau)\|_{L^2}d\tau\\
						&\leq C\delta_0\int_0^t\|(u-v)(\tau)\|_{L^2}d\tau\\
						&\leq C\delta_0(\int_0^t(1+\tau)^{-\frac{9}{8}}d\tau)^{\frac{1}{2}}
						(\int_0^t(1+\tau)^{\frac{9}{8}}\|(u-v)(\tau)\|_{L^2}^2d\tau)^{\frac{1}{2}}\\
						&\leq C\delta_0^2.
					\end{aligned}
					$$
					Hence, we get 
					\begin{equation}
						\begin{aligned}
							|\hat{q}(t,\xi)|
							&\leq C|\xi|\mathcal{I}_0^2\int_0^t e^{-|\xi|^2(t-\tau)}(1+\tau)^{-\frac{n}{2}}d\tau+\int_0^t\|\rho(u-v)(\tau)\|_{L^1}d\tau\\
							&\leq C|\xi|\mathcal{I}_0^2e^{-\frac{1}{2}|\xi|^2t}+C|\xi|^{-1}\mathcal{I}_0^2(1+t)^{-\frac{n}{2}}+C\delta_0^2.
						\end{aligned}
					\end{equation}
					After that, it holds
					$$
					\begin{aligned}
						\int_{X_3(t)} |\hat{q}(t,\xi)|^2d\xi\leq C\mathcal{I}_0^4(1+t)^{-\frac{n+2}{2}}+C\delta_0^4(1+t)^{-\frac{n}{2}}.
					\end{aligned}
					$$
					In terms of \eqref{O3}, it holds 
					\begin{equation}\label{O4}
						\begin{aligned}
							&\frac{d}{dt}H(t)+\frac{n}{t+4n}H(t)\\
							&\leq C\mathcal{I}_0^4(1+t)^{-\frac{n+4}{2}}+
							C\delta_0^4(1+t)^{-\frac{n+2}{2}}+C\mathcal{I}_0^3(1+t)^{-n}.
						\end{aligned}
					\end{equation}
					Then, multiplying \eqref{O4} by $(t+4n)^n$ and integrating the resulting equation with time gives  
					$$
					\begin{aligned}
						H(t)&\leq C\mathcal{I}_0^4(1+t)^{-\frac{n+2}{2}}+H(0)(1+t)^{-n}\\
						&+C\delta_0^4(1+t)^{-\frac{n}{2}}+C\mathcal{I}_0^3(1+t)^{-n+1}.
					\end{aligned}
					$$
					Hence, we obtain the following decay rate for large time that 
					\begin{equation}\label{OT1}
						\begin{aligned}
							\|q(t)\|_{L^2}&\leq C\mathcal{I}_0^2(1+t)^{-\frac{n+2}{4}}+C\delta_0(1+t)^{-\frac{n}{2}}+
							C\delta_0^2(1+t)^{-\frac{n}{4}}\\
							&\quad+C\mathcal{I}_0^{\frac{3}{2}}(1+t)^{-\frac{n-1}{2}}\\
							&\leq C\delta_0^2(1+t)^{-\frac{n}{4}}.
						\end{aligned}
					\end{equation}
					In terms of Lemma \ref{lem3}, \eqref{OT1} and the smallness of $\delta_0$, we can prove that for large time that
					\begin{equation}\label{smallt0}
						\begin{aligned}
							\|v\|_{L^2}&\geq \|w\|_{L^2}-\|q\|_{L^2}\\
							&\geq \delta_0^{\frac{3}{2}} (1+t)^{-\frac{n}{4}}-C\delta_0^2(1+t)^{-\frac{n}{4}}\\
							&\geq  \frac{1}{2}\delta_0^{\frac{3}{2}} (1+t)^{-\frac{n}{4}}.
						\end{aligned}
					\end{equation}
					This completes the proof of this proposition.
				\end{proof}
				\noindent{\it\textbf{Proof of Theorem \ref{Th2}.}\ }
				By Proposition \ref{p2}, we have the upper bound time decay rate of the velocity, 
				\begin{equation}
					\|v(t)\|_{L^2}\leq C\mathcal{I}_0(1+t)^{-\frac{n}{4}}.
				\end{equation}
				According to Proposition \ref{p5}, we obtain the lower bound decay rate of velcity  for large time
				\begin{equation}
					\|v(t)\|_{L^2} \geq \frac{1}{2}	\delta_0^{\frac{3}{2}}(1+t)^{-\frac{n}{4}}.
				\end{equation}
				Thus, we combine them together to prove 
				\begin{equation}
					\begin{aligned}
						\frac{1}{2}		\delta_0^{\frac{3}{2}}(1+t)^{-\frac{n}{4}}\leq \|v(t)\|_{L^2} \leq C\mathcal{I}_0(1+t)^{-\frac{n}{4}}.
					\end{aligned}
				\end{equation}
				This completes the proof of Theorem \ref{Th2}.
				\endproof
				
				{\bf Acknowledgments.}  The work of the first author was partially supported by National Key R\&D Programof China 2021YFA1000800, and National Natural Sciences Foundation of China (NSFC) 11688101. The work of the third author was supported by  National Natural Science Foundation of China (NSFC)
				12001033.

			\end{document}